\renewcommand{\@algocf@capt@plain}{above}
\definecolor{mybrown}{rgb}{0.47,0.12,0.12}
\definecolor{link}{rgb}{0.18,0.25,0.63}
\newcommand{\specificthanks}[1]{\@fnsymbol{#1}}
\numberwithin{equation}{section}
\newcommand{\norm}[1]{\left\| #1 \right\|}
\newcommand{\abs}[1]{ \left\vert #1 \right\vert}
\newcommand{\set}[1]{ \left\{ #1  \right\}}
\newcommand{\f}[1]{\mathbf{#1}}
\newcommand{\R}{ \mathbb{R}}
\newcommand{\C}{ \mathbb{C}}
\newcommand{\N}{ \mathbb{N}}
\newcommand{\om}{\Omega}
\g@addto@macro{\endabstract}{\@setabstract}
\newcommand{\authorfootnotes}{\renewcommand\thefootnote{\@fnsymbol\c@footnote}}%
\newtheorem{proposition}{Proposition}[section]
\newtheorem*{prop*}{Proposition}
\newtheorem{assumption}[proposition]{Assumption}
\newtheorem{remark}[proposition]{Remark}
\newtheorem{theorem}[proposition]{Theorem}
\newtheorem{corollary}[proposition]{Corollary}
\newtheorem{lemma}[proposition]{Lemma}
\begin{document}

 \begin{center}
 \large
  \textbf{Quantitative Magnetic Resonance Imaging: From Fingerprinting to Integrated Physics-Based Models} \par \bigskip \bigskip
   \normalsize
\textsc{Guozhi Dong}\textsuperscript{\,$\ddagger$},  \textsc{Michael Hinterm\"uller}\textsuperscript{\,$\dagger\ddagger$} \textsc{and Kostas Papafitsoros}\textsuperscript{\,$\dagger$}
\let\thefootnote\relax\footnote{
\textsuperscript{$\dagger$}Weierstrass Institute for Applied Analysis and Stochastics (WIAS), Mohrenstrasse 39, 10117, Berlin, Germany
}
\let\thefootnote\relax\footnote{
\textsuperscript{$\ddagger$}Institute for Mathematics, Humboldt University of Berlin, Unter den Linden 6, 10099, Berlin, Germany}

\let\thefootnote\relax\footnote{
\hspace{3.2pt}Emails: \href{mailto:guozhi.dong@hu-berlin.de}{\nolinkurl{guozhi.dong@hu-berlin.de}}}
 \let\thefootnote\relax\footnote{
 \hspace{37pt}\href{mailto: Michael.Hintermueller@wias-berlin.de}{\nolinkurl{Michael.Hintermueller@wias-berlin.de}}
 }
 \let\thefootnote\relax\footnote{
 \hspace{37pt}\href{mailto:Kostas.Papafitsoros@wias-berlin.de}{\nolinkurl{Kostas.Papafitsoros@wias-berlin.de}}
 }

\end{center}

\begin{abstract}
Quantitative magnetic resonance imaging (qMRI) is concerned with estimating (in physical units) values of magnetic and tissue parameters e.g., relaxation times $T_1$, $T_2$,  or proton density $\rho$. 
Recently in [Ma et al., Nature, 2013], Magnetic Resonance Fingerprinting (MRF) was introduced as a technique being capable of simultaneously recovering such quantitative parameters by using a two step procedure: (i) given a probe, a series of magnetization maps are computed  and then (ii) matched to (quantitative) parameters with the help of a pre-computed dictionary which is related to the Bloch manifold.
In this paper, we first put MRF and its variants into a perspective with optimization and inverse problems to gain mathematical insights concerning identifiability of parameters under noise and interpretation in terms of optimizers. Motivated by the fact that the Bloch manifold is non-convex and that the accuracy of the MRF-type algorithms is limited by the ``discretization size'' of the dictionary, a novel physics-based method for qMRI is proposed.
In contrast to the conventional two step method, our model is dictionary-free and is rather governed by a single non-linear equation, which is studied analytically.
This non-linear equation is efficiently solved via robustified Newton-type methods.
The effectiveness of the new method for noisy and undersampled data is shown both analytically and via extensive numerical examples for which also improvement over MRF and its variants is documented.
\end{abstract}

\vspace{0.5cm}
\noindent
\textbf{Keywords:}
Quantitative magnetic resonance imaging, integrated physics-based model, Bloch equations, parameter identification, fingerprinting, dictionary, projected Gauss-Newton Levenberg-Marquardt-type method

\vspace{0.2cm}
\noindent
\textbf{MSC2000:} 49K40, 49M15, 65J15, 65J20, 65J22

\section{Introduction}

\subsection{Context}
The current routine of magnetic resonance imaging (MRI) examinations typically provides qualitative images of nuclear magnetization of tissue accompanied by contrast ``weights''.
Physicians then visually inspect these images, which, being qualitative only, may however not provide enough information for certain diagnostic purposes.
In order to remedy this, {\it quantitative MRI} (qMRI) seeks to not only visualize the structure of the imaged object, but also to provide accurate parameters values (in physical units) that characterize different tissue types.
Such quantities are typically the proton density $\rho$ of Hydrogen atoms in water molecules, and the longitudinal and the transverse relaxation parameters $T_1$ and $T_2$, respectively, among others. These magnetic parameters are related to the evolution of the net magnetization $\mathfrak{m}$ through the renowned Bloch equations \cite{Blo46}:
\begin{equation}
\label{eq:Bloch_intro}
\begin{array}{lll}
\frac{\partial m}{\partial t}(t)&= & m(t)\times \gamma B(t)-\left (\frac{m_{x}(t)}{T_{2}}, \frac{m_{y}(t)}{T_{2}}, \frac{m_{z}(t)-m_{eq}}{T_{1}} \right)^{\top},\\
m(0) & = & m_0.
\end{array}
\end{equation}
Here $m$, yielding $\mathfrak{m}=\rho m$, is the macroscopic magnetization of (Hydrogen) proton of some unitary density in the tissue under an external magnetic field $B$, and the relaxation rates $T_1$ and $T_2$ are associated model parameters. Further, $m_0$ represents an initial state. System \eqref{eq:Bloch_intro} is instrumental in our quantification process established below and will be further described in Section \ref{subsec:Bloch}.

Although qMRI techniques are still in their infancy, several interesting ideas and methods have already been conceived.
Early approaches \cite{Lar_etal89} are based on a set of spin echo or inversion recovery images that are reconstructed from $k$-space data with respect to various repetition times ($TR$) and echo times ($TE$). In that context, acquisitions are designed for each parameter individually. The overall technique is often referred to as parametric mapping method and consists of two steps: (i) reconstruct a sequence of images as in qualitative MRI, and (ii) for each pixel of those images fit its intensity to an ansatz curve characterized by the magnetic parameter associated to the tissue imaged at that pixel.
Based on this idea, many improvements have been suggested in the literature; see for instance \cite{HuaGraClaBilAlt12}.
The associated approaches aim to simplify the physical model and handle tissue parameters separately, as these are considered to be time consuming for the patient.

Another line of research, initiated by Ma et al. in \cite{Ma_etal13}  and named Magnetic Resonance Fingerprinting (MRF), has recently gained considerable attention.
First, in an offline phase, it builds a database (dictionary) consisting of all trajectories (fingerprints) of the evolution of the associated magnetization. Each of the latter is obtained by solving Bloch equations for some pre-selected combination of parameter values, typically those of $T_{1}$ and $T_{2}$ (but sometimes also others such as, e.g., the off-resonance frequency). 
The underlying parameter combinations stem from a (sufficiently large) selection within a region $C_{ad}$ which is meaningful for human tissue.
Hence, the outcome of this first step of the method is a physiologically informed dictionary $\mathrm{Dic}(C_{ad})$ (i.e., a look-up table) relating a set of feasible parameters to their associated solutions of Bloch equations. In a second phase, given a (sensed) magnetization trajectory that is assumed to be related to a solution of the Bloch equations, with the help of this look-up table, the method identifies the parameter values that fit best to this trajectory. This main principle behind MRF enables a simultaneous estimation of (quantitative) tissue parameters. 

As our new method is inspired by MRF we further detail the MRF-workflow. Focusing on a thin slice $\om$ of the tissue of interest, its first step is to  reconstruct a sequence of $L$ images $\{X^{(\ell)}\}_{\ell=1}^{L}$ from data $\{D^{(\ell)}\}_{\ell=1}^{L}$ as in qualitative MRI, using a sufficiently rich excitation process through $L$ fast radio pulses; see Section \ref{sec:Background} for more details. At every time step, the data consists of a sub-sampling of the Fourier coefficients of the magnetization. Sub-sampling occurs due to the short time between each excitation. In a typical MRF routine, the reconstruction of the magnetization relies on the pseudo-inverse of the Fourier transform. This leads to noticeable artifacts in the magnetization images. However, the evolution of the magnetization of a specific tissue element (voxel) along the series of the reconstructed magnetization images can be assumed to correspond (approximately) to the solution of Bloch equations with parameters that correspond to this specific voxel. Hence, the second step of MRF matches  the recorded trajectory of each voxel to a fingerprint in the pre-computed dictionary, typically through minimizing a least-squares distance. In this way, the parameter values that  correspond to the ``best'' fingerprint are then assigned to that very voxel.
Formally, the MRF procedure can be stated as follows:
\begin{align*}\label{eq:ori_MRF_1}
 &\text{- Compute }  X^{(\ell)}\in \underset{X}{\operatorname{argmin}} \norm{P^{(\ell)}\mathcal{F} X- D^{(\ell)}}_{2}^2,\quad \ell=1,\ldots,L \tag{MRF-step 1},\\
       \label{eq:ori_MRF_2}
  &\text{- Compute } m^* \in   \underset{m \in \text{Dic}(C_{ad})}{\operatorname{argmin}}\;  S(T_{x,y} m,X^*) \text{ with }  X^*:=(X^{(1)},\ldots, X^{(L)}). \; \tag{MRF-step 2}
  \end{align*}
In \eqref{eq:ori_MRF_1} $D=(D^{(1)},\ldots,D^{(L)})$ denotes the data obtained after each pulse, where $D^{(\ell)}$ is a sub-sampling of the Fourier coefficients of the magnetization, more precisely of its transverse component $T_{x,y}$, i.e., the first two components only. Here, $P^{\ell}$ is the $\ell$-th sub-sampling operator, and the Fourier transform is denoted by $\mathcal{F}$. Accordingly, the first step computes $L$ reconstructions of the magnetization of the tissue slice, i.e., $X^{(\ell)}=\mathcal{F}^{-1}(P^{(\ell)})^{\top} D^{(\ell)},$ where $\mathcal{F}^{-1}$ stands for the pseudo-inverse.
 In \eqref{eq:ori_MRF_2}, for every voxel $r$ (in practice, for every image pixel) the best approximation is obtained via exhaustive search over the dictionary $\text{Dic}(C_{ad})$. 
We recall that $\text{Dic}(C_{ad})=\{m^{\theta}:\; \theta\in C_{ad} \}$, where $\theta$ is a vector of tissue parameters -- here, for simplicity, $\theta=(T_{1}, T_{2})$ -- and $C_{ad}$ is the admissible domain for these parameters. By $m^{\theta}$ we denote the solution of  Bloch equations with parameter $\theta$, evaluated at the same time instances as for the magnetization responses.  Hence, every element of $\text{Dic}(C_{ad})$ is a vector sequence of length $L$.
The function $S(\cdot,\cdot)$ is a Euclidean distance of normalized quantities, in order to avoid the multiplicative effect that the  density $\rho$ has in the magnetization. 
Correspondingly, the minimization task in \eqref{eq:ori_MRF_2} has to be understood in a ``voxel-wise'' sense, i.e., it is performed as often as the number of voxels (in practice pixels). 
Finally, the spatial parameter maps are formed by assigning (to the corresponding voxels) the parameter values $\theta$ that correspond to the optimal matchings $m^{\theta}$.
For more details we refer to Section \ref{subsec:MRF}.

While first numerical results \cite{Ma_etal13} show that MRF is a promising qMRI approach, several issues remain open from a mathematical viewpoint which motivate our work. For instance,
with respect to stability one is interested in knowing whether two close trajectories yield similar parameter values.
Assuming that this is the case, i.e., the method is stable, and $L$ is fixed, then mainly two factors influence the accuracy of MRF: (i) the quality of the $L$ magnetization reconstructions, and (ii) the completeness (fineness) of the dictionary. Clearly, one is interested in analysing and optimizing both aspects. In this vein, the available literature mostly focuses on improving (i), 
 {\cite{Ass_etal18, DavPuyVanWia14, DonAmtKokSomBor17,McgPieMaJiaSayGulGri14,PieMaCheBadGri16,ZhaSetYeCauWal16,Zha_etal18} to name  a few, while for (ii) an interpolation of the dictionary entries for a better matching was recently considered \cite{MazWeiTalEld18}. However the limitation of the dictionary based matching remains there.}
Moreover, both steps of the MRF procedure may benefit from each other when combined. For example, in order to obtain improved reconstructions in \eqref{eq:ori_MRF_1}, it may be informed by the physics-based model built into \eqref{eq:ori_MRF_2}. This motivates our approach of integrating the Bloch equations already into \eqref{eq:ori_MRF_1}, yielding a single-step qMRI approach upon fixing the best approximation problem.

Davies et al. \cite{DavPuyVanWia14} proposed a first approach in this direction which exhibits advantages over the original MRF scheme. They coined the name {\bf BL}och response recovery through {\bf I}terated {\bf P}rojection (BLIP) for their solution scheme which relates to a projected Landweber-type iteration for reconstructing the magnetization. A key step of the procedure is to project, in every iteration, the current reconstruction onto a dictionary related to the \emph{Bloch manifold}. This leads to an improved solution for the magnetization especially in the case of strongly sub-sampled data.
The underlying constrained optimization problem reads
\begin{align}
\label{eq:BLIP}
& \min_{X }  \quad    \norm{P\mathcal{F} T_{x,y}X - D}_2^2  \quad
 \text{subject to (s.t.)}\quad X \in \R^+  \text{Dic}(C_{ad}). \tag{BLIP} 
\end{align}
While we defer more details on the BLIP method to Section \ref{subsec:MRF}, we mention already here that the potential non-convexity of the positive cone of Bloch manifolds $\R^+  \text{Dic}(C_{ad}) $ represents a major complication as the projection may become non-unique; compare Proposition \ref{prop:dis_bloch_non_convex} below. This problem is even more concerning when data is corrupted by noise. 
In addition, as the projection (matching) is still dictionary based, the method can be  memory consuming, especially when the dictionary is highly refined in order to have high accuracy.
 {During the preparation of the paper, some more advanced versions of the BLIP method and other generalizations of MRF have been  proposed (see e.g. \cite{Ass_etal18,DuaRepGomDavWia18,GolCheWiaDav18,MazWeiTalEld18,Zha_etal18}), which can improve the MRF or BLIP algorithms in different aspects.
In \cite{Ass_etal18,MazWeiTalEld18,Zha_etal18}  the low rank structure of the representation of the magnetization function was explored, therefore significant improvement on efficiency and accuracy of MRF (BLIP) algorithms can be achieved in both the reconstruction and matching steps. In \cite{GolCheWiaDav18} the authors arrange the dictionary entries using cover tree structures which can accelerate the convergence of the BLIP algorithm. Finally \cite{DuaRepGomDavWia18} deals with the partial volume effect of low resolution images, which, in the context of  continuous functions, is to enforce certain regularity on the variations of the parameter functions.}

\subsection{Our contribution }
Our work  has two major focus points. In a first part, contained in Section \ref{sec:Math_MRF},  we perform sensitivity analysis to show that the matching process is a well-posed inverse problem when it is restricted to the Bloch manifold. This fact partially explains why the concept of involving a dictionary in MRF has been so successful. In particular in Theorem \ref{thm:stability} we show that if two trajectories of the magnetization evolution as dictated by the Bloch equations are sufficiently close, then the same holds true for the associated parameters, i.e., $\|\theta -\theta^{\delta}\|\le C \delta$ if  $\|m-m^{\delta}\|\le \delta$.
Here $\theta,\;\theta^\delta$ are the inferred parameters given the Bloch trajectories $m,\;m^\delta$. The constant $C$ is independent of $\delta$, and the norms will be made precise later in the text.
Furthermore, we also establish a mathematical understanding of why a large number of frames $L$ yields a positive influence on the quality of the final result; compare Theorem \ref{thm:gaussian_data}. 

In the second part of our work, with the goal of avoiding the potentially ill-posed projection step in BLIP, we aim to solve the parameter identification problem directly subject to the Bloch manifold. The associated new single-step model reads: Find $(\rho,\theta)$ such that
\begin{equation}\label{eq:reduced_form}
Q(\rho,\theta):=P\mathcal{F}(\rho T_{x,y} m(\theta)) =D, \quad \text{with } (\rho(r),\theta(r))\in \mathbb{R}^{+}\times C_{ad}, \quad  \text{for every }r\in\om.
\end{equation}
Here the qMRI-operator $Q$ inserts the Bloch dynamics into the data acquisition, and by solving \eqref{eq:reduced_form} we can recover both $\rho$ and $\theta$. However, the non-linearity of $Q$ makes the problem rather challenging as also additional difficulties arise due to aspects like, e.g., sub-sampling and noise in MRI.
 
As a numerical remedy under such adverse circumstances we propose a projected Levenberg-Marquardt regularized variant of the Gauss-Newton method. 
Analytically, this requires a differentiability result for the map $\theta\mapsto m(\theta)$. 
Furthermore, as for many highly non-linear and non-convex problems, the initialization of the iteration turns out to be crucial. For this initialization, we suggest to use BLIP (or MRF) with a rather coarse dictionary for efficiency purposes, only. 
Overall it turns out that our approach allows to produce more accurate parameter maps in less CPU-time. 

We mention that similar single-step dictionary-free approaches can be found in the recent papers \cite{Sbr_etal18} and  \cite{Sbr_etal17}.
In particular, the model in \cite{Sbr_etal18} abandons the Fourier space character of the data and asks for a relatively large number of data frames which leads to solving a very large non-linear system. As a result, the method is memory and CPU intense. 
The work of \cite{Sbr_etal17} focuses mainly on the experimental design, aiming at optimizing the excitation pulse sequences as well as the repetition times.

\subsection{Structure of the paper}
The rest of the paper is structured as follows:
In Section \ref{sec:Background}, we provide a general background of MRI, particularly to the Bloch equations and MRF.
In Section \ref{sec:Math_MRF}, we relate MRF-type algorithms to inverse problems. We also perform stability analysis for inversion of the Bloch mapping.
Our new integrated physics approach leading to a single-step model in form of a non-linear operator equation is the focus of Section \ref{sec:Physicalmodel}. We analyse the differentiability of the associated operator and show the non-convexity of the Bloch manifold. Subsequently, we discuss several Newton-type methods for its numerical solution. Here, we particularly focus on the case of undersampled and noisy data. In order to illustrate the efficiency of the proposed method for qMRI, numerical tests and comparisons are presented in Section \ref{sec:Numerics}. A short description on solutions of Bloch equations in different cases is given in the Appendix.

\section{Background on MRI and MRF}
\label{sec:Background}

We provide here a brief summary of the principles underlying MRI as they are useful for our purpose of generating an integrated physics-based model for reconstruction; see \cite{Wri97} for more details.  Also, a mathematical description of MRF and the BLIP algorithms is given. 

\subsection{Bloch equations}
\label{subsec:Bloch}
The Bloch equations \cite{Blo46} characterize the key physics in nuclear magnetic resonance.
For the sake of their derivation, let $\Omega$ be a domain in $\mathbb{R}^{2}$ modelling a thin slice of tissue. Every element point (or voxel) of $\Omega$ is denoted by $r$. The main principles of MRI lie in the interaction between an externally applied dynamic magnetic field $B=(B_{x},B_{y},B_{z})^\top$ and the (net or bulk) magnetization which is equal to all the individual dipole moments of the proton spins within a voxel. This net magnetization is  proportional to the hydrogen proton density $\rho$. Correspondingly, letting $m=(m_{x},m_{y},m_{z})^\top$ denote the magnetization per unit density element, the net magnetization in a voxel of density $\rho$ equals $\rho m$.

In the case of a static magnetic field $B_0$, which is  typically regarded to lie in $z$-direction, the net magnetization is aligned to that field with its longitudinal component $m_{z}$ reaching an equilibrium $m_{eq}$. This alignment is not achieved instantaneously but it is controlled by the \emph{longitudinal relaxation time} $T_{1}$ (or $T_{1}(r)$ emphasizing the dependence on a specific voxel). The longitudinal magnetization evolves  according to 
$m_{z}(t)=m_{eq}(1-e^{-(t/T_{1})})$.
Furthermore, the part of the magnetization orthogonal to $B_{0}$, which is called the transverse magnetization $(m_{x},m_{y})^\top$, precesses about the $z$-axis at a frequency equal to $\gamma |B_{0}|$ where $\gamma$ denotes the gyromagnetic ratio. This precession emits an electromagnetic signal which can be detected and measured by the coils of the MR machine. The transverse magnetization decays exponentially at a rate $T_{2}$, the \emph{transverse relaxation time}. 

The overall macroscopic dynamics that dictate the relation between the magnetization $m$, the magnetic field $B$ and the relaxation times $T_{1}$, $T_{2}$, are governed by the Bloch equations, which is a system of linear ordinary differential equations (ODEs):
\begin{equation}
\label{eq:Bloch}
\begin{array}{lll}
\frac{\partial m(t,r)}{\partial t}& = &m(t,r)\times \gamma B(t,r)-\Theta(r) \bullet (m(t,r)-m_{e}),\\
m(0,r) & = & m_0(r),
\end{array}
\end{equation}
where $m_e=(0,0,m_{eq})^\top$ (without loss of generality we assume $m_{e}= (0,0,1)^\top$ in what follows), and ``$\times$'' denotes the outer product between vectors. For the ease of notation we use 
\[ \Theta(r):=(\Theta_1(r) ,\Theta_2(r),\Theta_3(r) )^\top:=
\left(\begin{matrix}
1/T_2(r),
1/T_2(r),
1/T_1(r)
\end{matrix} \right)^\top,
\]
and the operation $\bullet$ in \eqref{eq:Bloch} denotes Hadamard product (component-wise multiplication of vectors).
As introduced above, $m:(0,\tau)\times\Omega  \rightarrow \R^3$, for some time horizon $\tau>0$, denotes the  magnetization in a unit volume per unit proton density, and $m_0$ is a given initial state. Note that the dependence on $r$ is here intrinsic and does not enter the equation. 
As the latter is linear, one can simply multiply \eqref{eq:Bloch} by $\rho$ in order to get the net magnetization.

The total magnetic field $B(t,r)$  can be typically decomposed into
\begin{equation}\label{eq:magnetic_field}
B(t,r)=B_0(r)+ B_1(t,r) + (0,0,G (t)\cdot r)^\top.
\end{equation}
Here $B_0$ denotes the external constant magnetic field that points into the positive $z$ direction, and it is generally assumed to be spatially homogeneous. For the sake of generality, we, however, keep here the dependence on $r$.
The summand $B_1(t,r)= (B_{1,x}(t,r),B_{1,y}(t,r),0)^\top$ corresponds to a radio frequency (RF) pulse, which is sent periodically and lasts only  for a very short time. It is used to excite the magnetization from its equilibrium by turning the magnetization precession away from the direction of the main magnetic field with the so-called flip angle
\[\alpha(t)= \gamma \int_0^t\abs{B_1(s)}ds.\]
These pulses usually last only very briefly compared to $T_1$ and $T_2$.
Therefore, RF sequences can be completely characterized by sequences of flip angles, and time is normally omitted.
The interval between two consecutive pulses is called repetition time ($TR$). As we shall see later in Section \ref{sec:Physicalmodel}, we consider a specific flip angle sequence pattern referred to as \emph{Inversion Recovery balanced Steady State Free Precession} (IR-bSSFP) \cite{Sche99}. Through this choice, the solution of Bloch equations can be simulated by a discrete linear dynamical system; see Section \ref{subsec:Bloch_dynamic}.
In the Appendix, we provide case discussions concerning the discrete Bloch dynamics and the solution of Bloch equations. 
The factor $G(t)$ in \eqref{eq:magnetic_field} is a magnetic gradient field designed to differentiate the point-wise information from the detected signal.

In brief, the measured signal can be expressed by
\[S(t)=\int_{\Omega} \rho(x,y) (T_{x,y} m(t,x,y)) e^{-\mathrm{i}\gamma \abs{B_0}t} e^{-\mathrm{i}\gamma \int_0^t (xG_x +y G_y)d\tau} dxdy ,\]
where $T_{x,y} m :=m_x(x,y)+ \mathrm{i} m_y(x,y)$ stands for the transverse magnetization, and $\mathrm{i}$ is the imaginary unit. Alternatively, one can think of $T_{x,y} m$ as a pair of real-valued components.
The third component of $m$ can usually not be measured due to the position of coils.
Finally, up to a demodulation by $e^{\mathrm{i}\gamma \abs{B_0} t}$, the MR signal $D^{(t)}$ can mathematically be modelled as a collection of coefficients of a Fourier transform of the transverse magnetization, i.e.,
\[ P^{(t)}\mathcal{F}(\rho T_{x,y} m^{(t)})=D^{(t)},\]
where $\mathcal{F}$ denotes the Fourier transform and $P^{(t)}$ a sub-sampling operator.

\subsection{Sub-sampling}
\label{subsec:sub-sampling}

In MRI, and in particular in MRF, one does not wait for the signal to return to equilibrium between two excitation pulses and due to time constraints only a small proportion of the $k$-space is sampled. Reconstruction of the magnetization under such circumstances leads to the occurrence of aliasing artifacts, especially when this reconstruction uses the basic (but fast to apply) pseudo-inverse of $\mathcal{F}$.

In the literature, three different sub-sampling schemes are designed and are often practically employed: spiral, radial  and Cartesian sub-sampling. Each of these corresponds to a different variation in time of the selection gradients $G_{x}$ and $G_{y}$.
In the original version of MRF, the first two patterns were preferred as the associated aliasing artifacts appear to be uncorrelated, respectively, and can be roughly treated as random noise. The latter is not the case for Cartesian sub-sampling; see for instance the numerical examples in \cite{DavPuyVanWia14}. The BLIP method, reviewed in the next section and improving over MRF, however perfectly fits to Cartesian sub-sampling.
As our starting point is the BLIP method,  {we  focus here mainly on Cartesian sub-sampling based on multishot echo-planar imaging (EPI) \cite{Mck93}. Nevertheless,  we also present numerical tests using radial sub-sampling}; see Section \ref{subsec:subsampling} for details.
We note however that these choices are not limiting, and other sub-sampling patterns may be used as well.

\subsection{MRF and BLIP in some detail}
\label{subsec:MRF}

In MRF one initially considers a pre-designed excitation pattern of $L$ flip angles $\{\alpha_{\ell}\}_{\ell=1}^{L}$ separated by a repetition time $TR$. Here, for simplicity,  we consider $TR$ to be constant but this is not necessary. Also a subset $C_{ad}\subset \mathbb{R}^{m}$ of the space of tissue parameters to be estimated is predefined. For the ease of exposition, here we consider $C_{ad}$ to contain admissible $\theta=(T_{1},T_{2})$-values, yielding $m=2$. 
For example, values for $T_{1}$ would typically range from $685$ms (white matter on brain) to $4880$ms (cerebrospinal fluid), with the corresponding range for $T_{2}$ to be $65$ms--$550$ms \cite{Ma_etal13}.  As we shall see below, in our dictionary-free approach we choose $C_{ad}$ to be a convex subset of $\mathbb{R}^{+}\times \mathbb{R}^{+}$, in particular a box, thus admitting values between a minimum and a maximum value. Dictionary based methods then replace $C_{ad}$ by a sufficiently fine discretization yielding $J$ parameter values $\{\theta_{j}\}_{j=1}^{J}$. For simplicity, in this section we write $C_{ad}$ also for the discretization. Using this set of $J$ parameter values, the specific excitation pattern, the sequence of flip angles $\{\alpha_{\ell}\}_{\ell=1}^{L}$ and the repetition time $TR$, one can simulate the Bloch equations by using a discrete linear dynamical system. In this context, the solutions of the Bloch equations are evaluated at discrete times $t_{1}, t_{2},\ldots, t_{L}$; see Section \ref{subsec:Bloch_dynamic} for details.
This generates a dictionary $\mathrm{Dic}(C_{ad})$ of $J$ magnetization responses (i.e., trajectories of the solutions of Bloch equations evaluated at times $t_1,\;t_2,\ldots, t_L$) $\{m^{\theta_{j}}\}_{j=1}^{J}$:
\[\mathrm{Dic}(C_{ad})=\{m^{\theta_{j}}: \theta_{j}\in C_{ad}, \; j=1,\ldots,J \}\subset\left (\left (\mathbb{R}^{3} \right )^{L} \right )^{J}.\]

Next, MR data are collected at the respective $L$ read-out  times. Each component $D^{(\ell)}$ of the data $D=(D^{(1)},\ldots, D^{(L)})$, corresponds to a sub-sampling (resulting by $P^{(\ell)}$) of the Fourier coefficients of the net magnetization $X^{(\ell)}$. Here, the reconstruction of the transverse magnetization image is done via the least square solution and hence these images suffer from aliasing artifacts. This step therefore consists of solving $L$ least square solutions (using the pseudo-inverse Fourier transform $\mathcal{F}^{-1}(P^{(\ell)})^\top$) to obtain $X^{\ast}=(X^{(1)},\ldots, X^{(L)})$, where $X^{(\ell)}:\Omega \to \mathbb{R}^{2}$. Note that, instead of $\mathbb{R}^{2}$, one can also use the complex number representation of the reconstructed magnetization $X$ and the Bloch response $m$, i.e., $m=m_{x}+\mathrm{i}m_{y}$. Observe that in this section $\Omega$ denotes a set of discrete voxels, which in practice are represented by pixels $i:1,\ldots, N$. Summarizing, we have:
\begin{flushleft}
\textbf{Step 1 of the MRF process: Reconstruction of the magnetizations}\\
\emph{Reconstruct the vector of $L$ net magnetizations $X^{\ast}=(X^{(1)},\ldots, X^{(L)})$ by solving
\[X^{(\ell)}\in \underset{X:\Omega\to \mathbb{R}^{2}}{\operatorname{argmin}}\; \|P^{(\ell)} \mathcal{F} X- D^{(\ell)}\|_{2}^{2}\;\;\; \text{ using } \;X^{(\ell)}=\mathcal{F}^{-1}(P^{(\ell)})^\top D^{(\ell)},\quad \ell=1,\ldots,L \]}
\end{flushleft}

The second and final step of MRF identifies the transverse component of $m^{\theta_{j}}$ (denoted by $T_{x,y}m^{\theta_{j}}$) in the dictionary $\mathrm{Dic}(C_{ad})$ that best matches the reconstructed magnetization at every voxel. The desired parameter map $\theta:\Omega \to \mathbb{R}^{2}$ is then obtained by mapping every discrete voxel $i$ to the $\theta$-value that corresponds to the matched $m^{\theta}$, and the reconstructed magnetization sequence at voxel $i$, i.e., $(X_{i}^{\ell})_{\ell=1}^{L}$, contributes with density $\rho_{i}$ that is associated with this particular tissue element. Utilizing normalization and an $\ell_2$-projection onto the discrete Bloch manifold, the best approximation and following density computation yield
\begin{flushleft}
\textbf{Step 2 of the MRF process: Matching of the magnetizations to the dictionary}\\
\emph{For every discrete voxel $i=1,\ldots,N$, compute the projected magnetization $X_{i}=(X_{i}^{\ell})_{\ell=1}^{L}$ according to \[m^{\theta_{j_{i}}}= \underset{m^{\theta}\in \mathrm{Dic}(C_{ad})}{\operatorname{argmin}} \left \|\frac{T_{x,y}m^{\theta}}{\|T_{x,y} m^{\theta}\|_{2}}-X_{i}\right\|_{2}^{2}.\]
Then extract $\{\theta_{j_{i}}\}_{i=1}^{N}=\{(T_{1}(i), T_{2}(i))\}_{i=1}^{N}$ from a look-up table, and compute the density map $\{\rho_{i}\}_{i=1}^{N}$  as
\[\rho_{i}=\frac{\|X_{i}\|_{2}}{\|T_{x,y} m^{\theta_{j_{i}}}\|_{2}}.\] }
\end{flushleft}

One may notice that Step 1 very likely has non-unique minimizers due to sub-sampling.
In \cite{Ma_etal13} the specific minimizer $X^{(\ell)}=\mathcal{F}^{-1}((P^{(\ell)})^\top D^{(\ell)})$ was chosen, which, however, may not be suitable; compare, e.g., \cite{DavPuyVanWia14}. 
In the later work, the algorithm BLIP was introduced as an alternative MRF-approach.
\begin{algorithm}[!h]
\caption{\textbf{BLIP  \cite{DavPuyVanWia14}.}}
\begin{enumerate}
\item Generate a dictionary $\mathrm{Dic}(C_{ad})$.
\item Initialize the magnetization vector $X=0$ and choose an initial step size $\mu_{1}$.
\item For $n=1,2,3,\ldots $ iterate as follows:
\begin{enumerate}
\item For every $\ell=1,\ldots, L$, perform a gradient decent step yielding 
\[\left(X^{(\ell)}\right)_{n+1}=\left (X^{(\ell)}\right )_n - \mu_{n} \mathcal{F}^{-1}(P^{(\ell)})^\top\left(P^{(\ell)}\mathcal{F} ( X^{(\ell)} )_n-D^{(\ell)}\right).\]
\item Project each $(X_{i})_{n+1}=\big( \big (X_{i}^{(\ell)} \big)_{\ell=1}^{L} \big)_{n+1}$ onto the dictionary $\mathrm{Dic}(C_{ad})$ to obtain, as Step 2 in MRF, 
\[\left (m^{\theta_{j_{i}}}\right)_{n+1}=\left (\left ((m^{\theta_{j_{i}}})^{(\ell)}\right )_{\ell=1}^{L}\right)_{n+1} \text{ and } (\rho_{i})_{n+1}\] for every voxel $i,=1,\ldots, N$.
\item For every $\ell=1,\ldots, L$, update $\left (X^{(\ell)} \right)_{n+1}$ as follows
\[\left (X_{i}^{(\ell)} \right)_{n+1}\leftarrow (\rho_{i})_{n+1} \left ((T_{x,y} m^{\theta_{j_{i}}})^{(\ell)}\right)_{n+1},\quad i=1,\ldots, N .\]
\item Update the step size $\mu_{n}$ (see \cite{DavPuyVanWia14} for some rules).
\end{enumerate}
\item Upon termination of the iteration with outcome $X$, as in MRF, construct parameter maps from $X$ by using a look-up table.
\end{enumerate}
\label{alg:BLIP}
\end{algorithm}

 It aims to compute an approximate solution to
\begin{align}
\label{eq:BLIP_in_text}
&\min_{X }  \quad    \norm{P\mathcal{F} T_{x,y}X - D}_{2}^2 , \; \tag{BLIP} 
\text{s.t.  }
  X \in \R^+  \text{Dic}(C_{ad})  \nonumber 
\end{align}
by employing a projected gradient descent method, see Algorithm \ref{alg:BLIP}. Note that in contrast to MRF, BLIP integrates the dictionary constraint into a single minimization step and is shown in \cite{DavPuyVanWia14} to be superior to MRF, in particularly for Cartesian sub-samping.

 {It is worth mentioning here another approach \eqref{eq:LR_BLIP} which has been considered in  \cite{MazWeiTalEld18}. The authors posed the following optimization problem in a discrete setting:
\begin{align}
\label{eq:LR_BLIP}
&\min_{X }  \quad    \norm{P\mathcal{F} T_{x,y}X - D}_{2}^2  + \beta \text{rank}(X), \; 
\text{s.t.  }
  X \in \R^+  \text{Dic}(C_{ad}) , 
\end{align}
where $X\in \C^{N\times L}$ is the vectorized version of the magnetization, $N$ is the total number of voxels, and $\text{rank}(X)$ denotes the rank of the matrix $X$, which is a non-convex penalty.
With the second constraint in \eqref{eq:LR_BLIP}, the rank penalty enforces also a constraint on the total number of unique entries in the dictionary which are used to represent $X$.
In order to avoid difficulties on the non-convexity, the authors considered a convex relaxations of \eqref{eq:LR_BLIP}. There they used a penalty on the nuclear norm of $X$ which is the sum of the singular values of $X$, instead of $\text{rank}(X)$.}

\section{MRF as an inverse problem and its stability analysis}
\label{sec:Math_MRF}
\subsection{Towards a coupled inverse problem}
For the sake of generality, our starting point is the time continuous version of the Bloch equations. In order to fix our setting, let
$\mathcal{Y}:=[L^{2}(\Omega)]^{3}$ and $\mathcal{Z}:=[L^{\infty}(\Omega)]^{3}$.
The initial magnetization is given by $m_{0}\in \mathcal{Y}$, and $B\in L^{\infty}( 0,\tau; \mathcal{Z})$ denotes a given external magnetic field for some time horizon $\tau>0$. We recall the Bochner space
 \[L^{\infty}( 0,\tau; \mathcal{Z}):=\{f: (0,\tau)\to \mathcal{Z}:\; \|f\|_{L^{\infty}( 0,\tau; \mathcal{Z})}<+\infty\} ,\]
with $\|f\|_{L^{\infty}( 0,\tau; \mathcal{Z})}=\underset{0 < t< \tau}{\operatorname{ess \sup}} \;\|f(t)\|_{\mathcal{Z}}$. The space 
$L^{1}(0,\tau; \mathcal{Y})$ is defined similarly. The space $W^{1,1}(0,\tau;\mathcal{Y})$
consists of all the functions $f:(0,\tau)\to \mathcal{Y}$ such that both $f$ and $\frac{\partial f}{\partial t}$ belong to $L^{1}(0,\tau; \mathcal{Y})$. We refer to \cite{Eva10} for more on Lebesgue, Sobolev and Bochner spaces.

A natural space for the parameter $\theta=(T_{1},T_{2})$ is $ [L^{\infty}(\Omega)]^{2}$, and we also require this parameter to be bounded uniformly away from zero. Consequently, we have $\Theta=(1/T_{2},1/T_{2},1/T_{1})^\top\in [L^{\infty}(\Omega)]^{3}$, as well.
Finally, recall that $m_{e}\equiv(0,0,1)^{\top}$. 

For our further analysis, it is convenient  to introduce the operator 
\[\mathcal{B}_{m_{0},B}: [L^{\infty}(\Omega)]^{2}\to \{m:(0,\tau)\to \mathcal{Y}\},\]
where $\mathcal{B}_{m_{0},B}(\theta)$ denotes the solution mapping of the Bloch equations \eqref{eq:Bloch} up to time $\tau$. 
Equipped with this notation, we now state the following family of inverse problems which represents a continuous version of the MRF process:
\begin{itemize}
\item[-] Problem 1: For some $t_{\ell}\in (0,\tau)$, $\ell=1,\ldots,L$, in order to obtain $X^{(t_{\ell})}\in L^{2}(\Omega)$ solve the linear equation
\begin{equation}
\label{eq:problem1}
P^{(t_{\ell})} \mathcal{F}X^{(t_{\ell})} =D^{(t_{\ell})},
\end{equation}
where $D^{(t_{\ell})}\in [L^{2}(K)]^2$, $K$ is a bounded frequency domain which is usually called $k$-space, $\mathcal{F}:[L^{2}(\Omega)]^2\to [L^{2}(K)]^2$, $P^{(t_{\ell})}: [L^{2}(K)]^2\to [L^{2}(K)]^2$.

\item[-] Problem 2: For  every $r\in \Omega$, to obtain $\theta=\theta(r)\in \R^{+}\times \R^{+}$ solve
\begin{equation}
\label{eq:problem2}
\rho(r)T_{x,y}(\mathcal{B}_{m_{0},B}(\theta))(\cdot,r)=X^{(\cdot)}(r),
\end{equation}
\end{itemize}
where $\rho\in L^{\infty}(\Omega)$, and $T_{x,y}$ is the transverse projection.
Note that, strictly speaking, the coupling of \eqref{eq:problem1} and \eqref{eq:problem2} makes sense only when $P^{(t_\ell)}=\text{Id}$, i.e., there is no sub-sampling. This is because of the fact that under sub-sampling, uniqueness of solutions for \eqref{eq:problem1} is not guaranteed, and  $X^{(\cdot)}(r)$ may not belong to the Bloch manifold.

Here Problem 1 corresponds to the first step in MRF and aims to invert the Fourier transform for sub-sampled (and potentially noisy) data. 
This type of problem is the central mathematical problem in standard MRI and has been extensively studied in the literature. In particular, variational methods e.g., sparse regularization methods and optimal weighted total variational methods, have been successfully applied towards that \cite{GuoSonZha17,HinRauWuLan17, knoll2011second, LusDonSanPau08}, to mention only a few recent results.

In view of the parameter identification problem involving the Bloch equations in the second step of MRF
we now focus on equation \eqref{eq:problem2}.  But for the sake of ease of demonstration, we neglect the effect of the density map $\rho$ and the transverse projection operator $T_{x,y}$, i.e., we study 
\begin{equation}\label{eq:inv_Bloch}
\mathcal{B}_{m_{0},B}(\theta)=m. 
\end{equation}

\subsection{Stability analysis on inverting the Bloch mapping}
\label{subsec:Analysis}
With the aim of quantifying the influence of inaccuracies or noise in the solution of \eqref{eq:problem1} on solving \eqref{eq:problem2}, we next analyse stability of \eqref{eq:inv_Bloch}. This is of relevance for both, MRF and BLIP.

In order to simplify the discussion, in this section we consider the time domain $(0,\tau)$ to be the period between two consecutive pulses. From a modelling point of view,  $m_{0}$ will be the magnetization right after the first pulse, i.e., after the application of the flip angle displacement and, $m(\tau,\cdot)$ will be the magnetization right before the next pulse.
In that case the magnetic field $B\ne 0$ is considered to be time independent which means that $B$ is a constant function in $ L^{\infty}( 0,\tau; \mathcal{Y})$ with respect to time and with (possibly spatially varying) values in $\mathcal{Y}$. Also, the effect of the gradient field $G$ is ignored here, as it only encodes the MRI signal.

From a classical result for evolutionary equations in Banach spaces (see e.g. \cite[Proposition 3.3]{BenPraDelMit07}), we infer existence of a solution $m$ of \eqref{eq:Bloch} 
in $W^{1,1}(0,\tau;\mathcal{Y})$. In fact, $m$ enjoys even higher regularity, but for our purposes $W^{1,1}(0,\tau;\mathcal{Y})$ turns out to be sufficient. Hence, we consider
$\mathcal{B}_{m_{0},B}: [L^{\infty}(\Omega)]^{2}\to W^{1,1}(0,\tau;\mathcal{Y})$
for given $m_{0}$ and $B$.

Given the existence of solutions, our further analysis relies on the following assumptions:
\begin{assumption}
\label{ass:feasi_domain}
Let $\theta(r) \in C_{ad}$ for all $r\in \Omega$, where $C_{ad}\subset \R^+ \times \R^+ $ denotes a feasible domain which is convex and bounded away from zero.
\end{assumption}

For our next assumption, we define the range of the Bloch map, i.e.,
\[R(\mathcal{B}_{m_{0},B}):=\left\{m:
\;m=\mathcal{B}_{m_{0},B}(\theta)\,\text{ with }  \theta\in [L^{\infty}(\Omega)]^{2} \text{ and } \theta(r)\in C_{ad} \text{ for all } r\in \Omega \right\}.\]

\begin{assumption}
\label{ass:echotime}
Let 
\begin{equation}
\label{eq:regularity}
m\in R(\mathcal{B}_{m_{0},B})\subset W^{1,1}(0,\tau;\mathcal{Y}),
\end{equation}
be a solution of the Bloch equations \eqref{eq:Bloch}.
Then the quantity $(\omega_{\tau}^{1}(r),\omega_{\tau}^{2}(r),\omega_{\tau}^{3}(r))^\top:=\int_0^\tau m(t,r)dt -  m_e \tau$ is bounded away from zero, i.e., 
there is a constant $c_{\tau}>0$ such that 
\begin{equation}
\label{eq:tau_condition}
 \inf_{r\in \Omega} \abs{ \omega_{\tau}^{i}(\tau)} \geq c_\tau,\quad \text{  for } i=1,2,3.
\end{equation}
\end{assumption}

\begin{remark}
Assumption \ref{ass:feasi_domain} implies no factual limitation in practice. Assumption \ref{ass:echotime} is also justified in practice as we consider $(0,\tau)$ to be the time between two consecutive pulses which roughly equals to repetition time.
In this period, the net magnetization always satisfies $m_x>c_{\tau}'>0$, $m_y>c_{\tau}''>0$, and $m_z<m_e$, and these give the estimate  \eqref{eq:tau_condition}.
Since, in an MRI experiment, the time domain consists of the repetition of periodic radio pulses, (the excitation time of the pulse is usually very short) our assumption is satisfied during the entire experiment.
\end{remark}

\begin{theorem}
\label{thm:uniqueness}
Let Assumption \ref{ass:echotime} hold,  the magnetic field satisfy $B\neq 0$, and let $m=  \mathcal{B}_{m_{0},B}(\theta)$ for some $\theta$. Then the $\theta$-value associated with $m$ is unique.
\end{theorem}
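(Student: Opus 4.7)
The plan is to exploit the fact that the parameter $\Theta$ is time-independent, so that pointwise multiplication with the magnetization in the Bloch equation can be inverted after integration in time, with Assumption \ref{ass:echotime} guaranteeing the denominator is non-degenerate.

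First, I will argue by contradiction and suppose that there exist two admissible parameters $\theta_1, \theta_2 \in [L^\infty(\Omega)]^2$, with associated $\Theta_k := (1/T_2^{(k)}, 1/T_2^{(k)}, 1/T_1^{(k)})^\top$ for $k=1,2$, such that $m := \mathcal{B}_{m_0,B}(\theta_1) = \mathcal{B}_{m_0,B}(\theta_2)$. Since $m$ satisfies the Bloch equation \eqref{eq:Bloch} for both choices of relaxation parameters, with the same $m_0$ and $B$, subtracting the two instances of \eqref{eq:Bloch} makes the $\partial_t m$ and $m \times \gamma B$ contributions cancel. This leaves the identity
\begin{equation*}
\bigl(\Theta_1(r) - \Theta_2(r)\bigr) \bullet \bigl(m(t,r) - m_e\bigr) = 0 \quad \text{for a.e. } (t,r) \in (0,\tau)\times\Omega,
\end{equation*}
which, crucially, no longer involves any time derivative of $m$ nor the magnetic field $B$.

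Second, I will exploit the spatial-only dependence of $\Theta_1 - \Theta_2$: since $(\Theta_1(r) - \Theta_2(r))_i$ does not depend on $t$, I can integrate the $i$-th scalar component of the above identity over $t \in (0,\tau)$. The resulting equality is
\begin{equation*}
\bigl(\Theta_1^i(r) - \Theta_2^i(r)\bigr)\,\omega_\tau^i(r) = 0, \quad i = 1,2,3, \quad \text{for a.e. } r \in \Omega,
\end{equation*}
where $\omega_\tau^i(r) = \int_0^\tau (m_i(t,r) - m_{e,i})\,dt$ is precisely the quantity introduced in Assumption \ref{ass:echotime}. The time integration is justified by the regularity $m \in W^{1,1}(0,\tau;\mathcal{Y})$ coming from \eqref{eq:regularity}.

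Third, I will invoke Assumption \ref{ass:echotime}, which provides the uniform lower bound $|\omega_\tau^i(r)| \geq c_\tau > 0$ for a.e.\ $r \in \Omega$ and each $i=1,2,3$. Dividing the previous identity componentwise by $\omega_\tau^i(r)$ yields $\Theta_1^i(r) = \Theta_2^i(r)$ for a.e.\ $r$ and every $i$, which by the definition of $\Theta$ is equivalent to $\theta_1 = \theta_2$ as elements of $[L^\infty(\Omega)]^2$. This gives the claimed uniqueness.

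The main (but mild) obstacle is the second step: the cancellation of $m \times \gamma B$ and $\partial_t m$ requires that one is comparing two solutions to the Bloch system that agree as functions of $(t,r)$, not just at a fixed time, so that one genuinely ends up with the pointwise algebraic identity above. Given this, the argument hinges on a clean use of Fubini together with the non-vanishing hypothesis \eqref{eq:tau_condition}; the hypothesis $B \neq 0$ is used only implicitly, to guarantee that Assumption \ref{ass:echotime} can be realized (in particular that the transverse components $m_x, m_y$ do not vanish identically, which would otherwise make $\omega_\tau^1, \omega_\tau^2$ trivially zero).
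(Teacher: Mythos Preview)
Your argument is correct and rests on the same core idea as the paper: integrate the Bloch equation over $(0,\tau)$ so that the time-independence of $\Theta$ reduces the problem to dividing by $\omega_\tau$, which Assumption~\ref{ass:echotime} guarantees is nonzero. The only cosmetic difference is that the paper writes down the explicit reconstruction formula $\Theta = \bigl(m(0,\cdot)-m(\tau,\cdot)+\int_0^\tau m\times\gamma B\,dt\bigr)\,./\,\omega_\tau$ and reads off uniqueness directly, whereas you subtract two instances of the equation first, which makes the $m\times\gamma B$ term cancel; both routes are equivalent, and your observation that $B\neq 0$ is not used directly in the argument is accurate.
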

\begin{proof}
Observe that by integrating the Bloch equations over the time domain $(0,\tau)$, we have
\begin{equation}
\label{eq:Bloch2}
\Theta(r)\!=\!\left(\! m(0,r)- m(\tau,r )+\!\!\int_{0}^{\tau} m(t,r)\times \gamma B(t,r) dt\!\right)\mathbf{./}\omega_\tau,
\end{equation}
where ``./'' denotes a component-wise quotient of vectors.
Note that the integrals are well-defined, since for almost every $r$, $m(r,\cdot)\in L^{1}(0,\tau)$. Also, due to Assumption \ref{ass:echotime} we have $\omega_\tau^i\ne 0$ for $i\in\{1,2,3\}$. The uniqueness of $\theta=(\frac{1}{\Theta_3},\frac{1}{\Theta_1})^\top$ follows readily. 
\end{proof}

We immediately have the next corollary.
\begin{corollary}
\label{cor:bijection}
Let Assumption \ref{ass:echotime} hold, and $B\ne 0$. Then the  Bloch mapping satisfies
\[\mathcal{B}_{m_{0},B}(\theta_{1})=\mathcal{B}_{m_{0},B}(\theta_{2}) \iff \theta_{1}=\theta_{2}.\]
\end{corollary}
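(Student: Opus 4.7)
The plan is to observe that this corollary is almost immediate from Theorem \ref{thm:uniqueness}, since that theorem already asserts uniqueness of $\theta$ given a magnetization in the range of $\mathcal{B}_{m_0,B}$. The corollary just rephrases this as injectivity of the Bloch map on the admissible class.

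First I would handle the ``$\Leftarrow$'' direction, which is trivial: since $\mathcal{B}_{m_0,B}$ is a well-defined map (solutions of \eqref{eq:Bloch} exist and are unique in $W^{1,1}(0,\tau;\mathcal{Y})$ for any given $\theta$ under Assumption \ref{ass:feasi_domain}), identical inputs produce identical outputs. For the ``$\Rightarrow$'' direction, I would set $m := \mathcal{B}_{m_0,B}(\theta_1) = \mathcal{B}_{m_0,B}(\theta_2)$. By construction $m \in R(\mathcal{B}_{m_0,B})$, so Assumption \ref{ass:echotime} together with $B \neq 0$ puts us in the exact setting of Theorem \ref{thm:uniqueness}. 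That theorem guarantees that the $\theta$-value associated with $m$ is unique, which forces $\theta_1 = \theta_2$ pointwise in $\Omega$, and hence as elements of $[L^\infty(\Omega)]^2$.

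No real obstacle is expected here: the entire content of the corollary is already contained in the uniqueness statement of Theorem \ref{thm:uniqueness}, which was proved via the explicit algebraic inversion formula \eqref{eq:Bloch2} recovering $\Theta(r)$ from $m$ component-wise using the non-degeneracy of $\omega_\tau$. The only thing to verify is that the hypotheses of Theorem \ref{thm:uniqueness} transfer to both $\theta_1$ and $\theta_2$; this is automatic because $m$ lies in $R(\mathcal{B}_{m_0,B})$ by assumption, so the regularity \eqref{eq:regularity} and the non-degeneracy \eqref{eq:tau_condition} apply to the common image $m$, independently of which preimage we started from.
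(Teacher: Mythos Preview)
Your proposal is correct and matches the paper's own proof almost exactly: the paper derives injectivity directly from Theorem~\ref{thm:uniqueness} and handles the converse by appealing to uniqueness of solutions of the ODE (citing the Picard--Lindel\"of Theorem), which is precisely your ``well-defined map'' argument for the $\Leftarrow$ direction.
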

\begin{proof}
For every fixed $\theta(r)\in C_{ad}$, the Bloch mapping is well-defined under Assumption \ref{ass:echotime}.
The mapping is injective by Theorem  \ref{thm:uniqueness}. The other direction follows from
the Picard--Lindel\"of Theorem (see, e.g., \cite{Tes12}). 
\end{proof}
\begin{remark}
The uniqueness result indicates that enforcing the magnetization function to be in the range of a Bloch mapping also guarantees a unique parameter.
This  explains the idea behind BLIP which aims at an improved solution (when compared to MRF) from undersampled data by using projection steps onto the Bloch manifold.
\end{remark}

The main stability result of this section is stated next.

\begin{theorem}
\label{thm:stability}
Let  the Assumption \ref{ass:feasi_domain} be satisfied, and let $m,m^{\delta}\in R(\mathcal{B}_{m_{0},B})$ with corresponding parameters $\theta, \theta^{\delta}$.
If Assumption \ref{ass:echotime} holds for both $m$ and $m^\delta$, and\\
 $\norm{m-m^\delta}_{W^{1,1}(0,\tau;\mathcal{Y})}\leq \delta$ for $\delta>0$, 
 then we have
\[\norm{\theta-\theta^\delta}_{[L^1(\Omega)]^{2}}\leq C(\tau,\theta,B) \delta,\]
where $C(\tau,\theta,B)$ is a constant depending on $\tau$, $\theta$ and $B$, but not on $\delta$.
\end{theorem}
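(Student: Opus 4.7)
My approach is to invert the closed-form expression for $\Theta$ obtained in the proof of Theorem~\ref{thm:uniqueness}, apply it in parallel to $m$ and $m^\delta$, and then propagate the $W^{1,1}$-closeness hypothesis through a quotient estimate whose denominators are controlled by Assumption~\ref{ass:echotime}. The strategy is purely pointwise in $r\in\Omega$, combined with a single integration at the end.

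Since $m, m^\delta \in R(\mathcal{B}_{m_0,B})$ share the same initial state $m_0$, formula \eqref{eq:Bloch2} gives, pointwise in $r\in\Omega$,
\begin{equation*}
\Theta(r)=A(r)./\omega_\tau(r),\qquad \Theta^\delta(r)=A^\delta(r)./\omega_\tau^\delta(r),
\end{equation*}
where $A(r):=m_0(r)-m(\tau,r)+\int_0^\tau m(t,r)\times\gamma B(t,r)\,dt$, and $A^\delta,\omega_\tau^\delta$ are defined analogously with $m^\delta$. The elementary identity
\begin{equation*}
\Theta(r)-\Theta^\delta(r)=(A(r)-A^\delta(r))./\omega_\tau(r) \;+\; A^\delta(r)\bullet(\omega_\tau^\delta(r)-\omega_\tau(r))./(\omega_\tau(r)\bullet\omega_\tau^\delta(r)),
\end{equation*}
together with Assumption~\ref{ass:echotime} applied to both $m$ and $m^\delta$ (yielding $|\omega_\tau^i(r)|,|\omega_\tau^{\delta,i}(r)|\geq c_\tau$ uniformly in $r$), and an $L^\infty$-bound on $A^\delta$ (which follows from the boundedness of Bloch solutions with parameters in the compact set $C_{ad}$ together with the essential boundedness of $B$), reduces the whole problem to estimating $A-A^\delta$ and $\omega_\tau-\omega_\tau^\delta$.

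Both quantities have the form of an endpoint-time trace plus a time integral of $m-m^\delta$: namely $A-A^\delta=(m^\delta(\tau,r)-m(\tau,r))+\int_0^\tau (m-m^\delta)\times\gamma B\,dt$ and $\omega_\tau^\delta-\omega_\tau=\int_0^\tau (m^\delta-m)\,dt$. Taking absolute values, integrating over $\Omega$, and applying the Cauchy--Schwarz inequality on the bounded slice $\Omega$ (to pass from the $L^2(\Omega)$-based $\mathcal{Y}$-norm down to $L^1(\Omega)$) together with the continuous Bochner-space embedding $W^{1,1}(0,\tau;\mathcal{Y})\hookrightarrow C([0,\tau];\mathcal{Y})$ (to control the trace term $m(\tau,\cdot)-m^\delta(\tau,\cdot)$) I expect to arrive at
\begin{equation*}
\|\Theta-\Theta^\delta\|_{[L^1(\Omega)]^3}\leq C_1(\tau,B,c_\tau,|\Omega|)\,\|m-m^\delta\|_{W^{1,1}(0,\tau;\mathcal{Y})}\leq C_1\,\delta.
\end{equation*}

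Finally, I convert back to $\theta=(1/\Theta_3,1/\Theta_1)$: since $\theta,\theta^\delta\in C_{ad}$ are bounded away from zero (Assumption~\ref{ass:feasi_domain}), the corresponding $\Theta$-components are uniformly bounded above and below, so the elementary identity $|1/a-1/b|=|b-a|/(|a||b|)$ gives $|\theta(r)-\theta^\delta(r)|\leq C_2(\theta)\,|\Theta(r)-\Theta^\delta(r)|$ pointwise in $r$, and integrating produces the claimed $L^1$-bound, with $C(\tau,\theta,B)$ collecting all constants. The main technical hurdle is the quotient estimate itself: it is essential that Assumption~\ref{ass:echotime} is assumed for \emph{both} $m$ and $m^\delta$, otherwise the denominator $\omega_\tau^\delta$ cannot be bounded below uniformly, and the Lipschitz constant would blow up along badly behaved perturbations. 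A secondary care point is invoking the right Bochner Sobolev embedding to extract the pointwise-in-time value $m(\tau,\cdot)$ from the $W^{1,1}$-norm, rather than merely an integrated bound.
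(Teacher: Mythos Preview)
Your proposal is correct and follows essentially the same strategy as the paper's proof: both start from the explicit formula \eqref{eq:Bloch2}, subtract the two instances, control the denominators via Assumption~\ref{ass:echotime}, bound the resulting differences by the $W^{1,1}$-norm of $m-m^\delta$, and finish with the Lipschitz step from $\Theta$ back to $\theta$.

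The only noteworthy variations are cosmetic. First, your quotient decomposition places $A^\delta$ in the numerator of the second term and requires a separate $L^\infty$-bound on it; the paper instead writes that term as $\Theta\bullet(\omega_\tau^\delta-\omega_\tau)./\omega_\tau^\delta$, exploiting directly that $\Theta\in[L^\infty(\Omega)]^3$ by Assumption~\ref{ass:feasi_domain}. (These are equivalent, since $A^\delta=\Theta^\delta\bullet\omega_\tau^\delta$ reduces your term to $\Theta^\delta\bullet(\omega_\tau^\delta-\omega_\tau)./\omega_\tau$.) Second, for the endpoint trace $m(\tau,\cdot)-m^\delta(\tau,\cdot)$ you invoke the embedding $W^{1,1}(0,\tau;\mathcal{Y})\hookrightarrow C([0,\tau];\mathcal{Y})$, whereas the paper simply writes $m(\tau,r)-m^\delta(\tau,r)=\int_0^\tau\big(\partial_t m-\partial_t m^\delta\big)\,dt$ (using $m_0=m_0^\delta$) and bounds it directly by the $L^1$-norm of the time derivative. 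Both routes are valid; the paper's is slightly more elementary and makes transparent exactly which part of the $W^{1,1}$-norm is consumed by the trace term.
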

\begin{proof}
Using equation \eqref{eq:Bloch2} with the obvious definition of $\omega_\tau^\delta$, we have
\begin{eqnarray*}
\Theta-\Theta^\delta
&= &
\left( m_{0}- m(\tau,r )+\int_{0}^{\tau} m(t,r)\times \gamma B(t,r) dt\right) ./ \omega_\tau \\
& & - \left( m_{0}^{\delta}- m^\delta(\tau,r )+\int_{0}^{\tau} m^\delta(t,r)\times \gamma B(t,r) dt\right)./\omega_\tau^\delta \\
& =&\Theta \bullet \left((\omega_\tau^\delta - \omega_\tau) ./ \omega_\tau^\delta\right)
- \left(m_{0}^{\delta} -m_{0} - m^\delta(\tau,r) + m(\tau,r)\right) ./ \omega_\tau^\delta \\
& & -\left( \int_0^\tau m^\delta(t,r)\times \gamma B(t,r) - m(t,r)\times \gamma B(t,r)dt\right) ./\omega_\tau^\delta.
\end{eqnarray*}
Note that $\int_0^\tau m^\delta(t,r)-m_{e}^\delta(r)dt=\int_0^\tau m^\delta(t,r)dt -m_e^\delta\tau$,
and $m_e^\delta=m_e.$
Due to \eqref{eq:tau_condition}, we have
\[\inf_{r\in \Omega} \abs{\int_0^\tau m(t,r)-m_{e}(r)dt }\geq c_\tau,\quad\text{and}\quad \inf_{r\in \Omega} \abs{\int_0^\tau m^\delta(t,r)-m_{e}(r)dt }\geq c_\tau. \]
As a consequence, we obtain the estimate 
\begin{align*}
\norm{\Theta-\Theta^\delta}_{[L^1(\Omega)]^{3}}
& \leq \frac{1}{c_\tau} \int \abs{\Theta(r)\bullet \int_0^\tau ( m^\delta(t,r)  - m(t,r) )dt }dr\\
& \quad +\frac{1}{c_\tau}\int \abs{\int_0^\tau \left(\frac{\partial m(t,r)}{\partial t}-  \frac{\partial m^{\delta}(t,r)}{\partial t} \right)d t  }dr  \\
& \quad + \frac{1}{c_\tau}\int \abs{ \int_0^\tau (m^\delta(t,r) - m(t,r) )\times \gamma B(t,r)dt}dr\\
& \le    \frac{1}{c_\tau} C \|m^{\delta}-m\|_{L^{1}(0,\tau;[L^{1}(\Omega)]^{3})}
+\frac{1}{c_\tau}  \left\|\frac{\partial m^{\delta}}{\partial t}-\frac{\partial m}{\partial t}\right\|_{L^{1}(0,\tau;[L^{1}(\Omega)]^{3})}\\
& \quad +\frac{1}{c_\tau} C  \|m^{\delta}-m\|_{L^{1}(0,\tau;[L^{1}(\Omega)]^{3})},
\end{align*}
with generic constants $C$ depending on $\gamma$, $B$ and $\Theta$.
Here we have used the facts that $\Theta\in [L^{\infty}(\Omega)]^{2}$ and the outer product with $B(t,r)$ can be written as the application of a linear operator with bounded norm (in $t$ and $r$) as $B$ is bounded in $L^{\infty}(0,\tau;\mathcal{Z})$ and also independent of time.

As $(\theta_1,\theta_2)=(\frac{1}{\Theta_3},\frac{1}{\Theta_1})$
and by Assumption \ref{ass:feasi_domain} we can find a constant $C_{1}>0$ such that
\[\norm{\theta-\theta^\delta}_{[L^1(\Omega)]^{2}} \leq C_{1} \norm{\Theta-\Theta^\delta}_{[L^1(\Omega)]^{3}}. \]
This follows from the fact that the function $h:[a,b]\to \R$, with $h(\beta)=1/\beta$, is Lipschitz for $0<a<b<\infty$.
The proof is completed by combining the last two inequality relations and the fact that the $[L^{1}(\Omega)]^{3}$ norm is bounded by $[L^{2}(\Omega)]^{3}$ norm.
\end{proof}

The above result can be interpreted as follows. Theorem \ref{thm:stability} shows that the inverse problem  \eqref{eq:inv_Bloch} is well-posed by restricting the right hand side to the range of the Bloch mapping.
That is, if the reconstructed magnetization is in the Bloch manifold (more precisely the positive cone of the manifold), then the values of the tissue parameters $\theta$ recovered from the dictionary should in principle be not too far away from the exact solutions.

The analytical properties of the Bloch mapping and its inverse not only support the application of MRF-type schemes, but they also motivate us to find yet more accurate solution techniques for quantitative MRI. This is our target in the next section.

\section{An integrated physics-based method for qMRI}
\label{sec:Physicalmodel}
We now propose a method for qMRI that integrates the physics model into the reconstruction process. In contrast to the previously discussed two-step procedures, it consists of a single step only.

On an abstract level, our model is associated with the non-linear operator equation
\begin{equation}\label{eq:Q_equation}
Q(\f x) =D,
\end{equation}
where $\f x(r)= (\rho(r), \theta(r))\in \tilde{C}_{ad}:=\R^+\times C_{ad}$ for all $r\in \Omega$, $D$ is the acquired MRI signal, and the qMRI-operator $Q$ is defined by
\begin{equation}
\label{eq:Q_function}
Q(\f x):= P\mathcal{F}(\rho T_{x,y}M(\theta)) .
\end{equation}
It  integrates the Bloch mapping within the data acquisition procedure.

Anticipating our subsequent development, $M(\theta)$ represents the discrete Bloch dynamics, which corresponds to the time continuous version $m(\theta)$ previously discussed. 
\subsection{Bloch mapping as discrete dynamics}
\label{subsec:Bloch_dynamic}
With the aim of employing a fast imaging protocol for absolute quantification of $T_1$ and $T_2$ post-contrast (e.g., upon administering Gadolinium (Gd)), we focus here on Inversion Recovery balanced Steady-State Free Precession (IR-bSSFP) flip angle sequence patterns; see \cite{Sche99} and compare also \cite{Karm14}. IR-bSSFP is a specific MRI excitation pulse sequence widely used in applications and it allows for a simple approximation of the solutions of the Bloch equations at the read out times.
In our subsequent analysis and numerical examples, we always use the associated discrete dynamics approximating the continuous Bloch equations.

To simplify the presentation, we will ignore the factor of off-resonance and only consider the homogeneous case of the flip angles and off-resonance frequency. 
In this case, the magnetization after each $n$-th excitation pulse is simulated by the following recursion formula \cite{Sche99}
\begin{equation}
\label{eq:IR_bSSFP}
\left\{\begin{array}{lll}
M_\ell  &= & E_1(TR_{\ell},\theta)R_{\phi_\ell} R_x(\alpha_{\ell})R_{\phi_\ell}^\top M_{\ell-1} +E_2(TR_{\ell},\theta)M_e,   \\
M_e  &= &(0,0,1)^\top ,\\
M_0  &= & -M_e=(0,0,-1)^\top.
\end{array}\right.
\end{equation}
Here $\{\alpha_{\ell}\}_{\ell=1}^{L}$ and $\{TR_{\ell}\}_{\ell=1}^{L}$ are the flip angles and repetition time sequences, and $\{M_\ell\}_{\ell=1}^{L}$ are the magnetizations at the middle of each $TR_{\ell}$ time interval.
Moreover we denote
\[E_1(TR_{\ell},\theta)=\left(
\begin{array}{ccc}
e^{-\frac{TR_{\ell}}{T_2}} & 0 & 0\\
 0  &e^{-\frac{TR_{\ell}}{T_2}} & 0\\
  0 & 0& e^{-\frac{TR_{\ell}}{T_1}} 
\end{array}\right),
\quad
\text{  }\;\;
E_2(TR_{\ell},\theta) = \left(1-e^{-\frac{TR_{\ell}}{T_1}}\right) 
\]
and also
\[R_{\phi_\ell}=\left(
\begin{array}{ccc}
\cos(\phi_\ell) & \sin(\phi_\ell) & 0\\
-\sin(\phi_\ell)  &\cos(\phi_\ell) & 0\\
  0 & 0& 1 
\end{array}\right)
\text{ and }\;\;
R_x(\alpha_{\ell})=\left(
\begin{array}{ccc}
1& 0 & 0 \\
0& \cos(\alpha_{\ell}) & \sin(\alpha_{\ell}) \\
0& -\sin(\alpha_{\ell})  &\cos(\alpha_{\ell})
\end{array}\right).
\]
The angle $\phi_\ell$ denotes a phase shift by the gradient magnetic fields \cite{Sche99} and is assumed to be known.

Writing \eqref{eq:IR_bSSFP} in a compact form, we are able to derive the evolution of the discrete system for the magnetization vectors
\begin{align}
\label{eq:Bloch_discrete}
M_\ell = &\left(\prod_{k=1}^\ell E_1(TR_{k},\theta) R(\alpha_{k})\right) M_0 +   E_2(TR_{\ell},\theta)M_e \\
 & \quad +  \sum_{k=1}^{\ell-1} \left( E_2(TR_{k},\theta)\prod_{j=k+1}^\ell E_1(TR_{j},\theta)R(\alpha_{j})\right)M_e, \nonumber
\end{align}
where we use the matrix notation $R(\alpha_{\ell}):= R_{\phi_\ell} R_x(\alpha_{\ell})R_{\phi_\ell}^\top$.
Note that \eqref{eq:Bloch_discrete} establishes a mapping between $\theta$ and $\{M_\ell\}_{\ell=1}^{L}$ yielding a discrete (in time) version of the operator $\mathcal{B}_{m_{0},B}$ associated with the IR-bSSFP pulse sequence.
For utilizing Gauss-Newton-type algorithms for solving \eqref{eq:Q_equation}, it is of interest to study differentiability of this mapping.
This and further properties are therefore the subjects of the following section.

\subsection{Properties of  the Bloch mapping and the qMRI-operator}
\label{subsec:differentiability}

Consider the discrete Bloch mapping as defined in \eqref{eq:IR_bSSFP}:
\[M: V \to \left[\mathcal{Y}:=[L^{2}(\Omega)]^{3}\right]^{L},\quad M(\theta):=\{M_\ell(\theta)\}_{\ell=1}^{L},\]
where $V$ is the open subset of $[L^{\infty}(\Omega)]^{2}$ that consists of all functions with strictly positive values almost everywhere.
 {Note here and later, we use the notation $\{\cdot\}$ to represent vector sequences.
Moreover, for small positive real value $\sigma$, we utilize $o(\sigma)$ to denote $\frac{o(\sigma)}{\sigma}\to 0$ as $\sigma\to 0$.}

\begin{proposition}
\label{prop:dis_bloch}
Let $\{M_\ell(\theta)\}_{\ell=1}^{L}$ be the sequence given in \eqref{eq:Bloch_discrete}, $\{\alpha_{\ell}\}_{\ell=1}^{L}$ the sequence of flip angles with $\alpha_\ell\in (0,\pi)$ for every $\ell=1,\ldots, L$, and $\{TR_{\ell}\}_{\ell=1}^{L}$ the sequence of repetition times with $TR_{\ell}>0$ for every $\ell$. 
Given $M_0\in \mathcal{Y}$, then the following statements hold true:
\begin{enumerate}
\item[(i)]  $M$ is Fr\'echet differentiable with bounded derivative. 
Moreover, given sufficiently small $h\in [L^{\infty}(\om)]^{2}$ we have the general estimate for all $q\geq 2$ and for $q=+\infty$:
\begin{equation}
\label{eq:Lq_estimate_M}
\norm{M(\theta+h)-M(\theta) - M'(\theta)h}_{[\mathcal{Y}]^L}= o\left(\norm{h}_{[L^{q}(\om)]^{2}}\right).
\end{equation}
In addition if $M_0\in [L^{\infty}(\om)]^3$, then there exist some constant $C$ independent of $h$ for the following estimate
\begin{equation}
\label{eq:L2_bound_M}
\norm{ M'(\theta)h}_{[\mathcal{Y}]^L}\leq C \norm{h}_{L^2(\om)]^{2}}.
\end{equation}
\item[(ii)]  Let $M_e=(0,0,1)^\top$ and either $M_0= M_e$ or $M_0= -M_e$, then the operator $M$ is injective, i.e., given $\theta^a,\;\theta^b\in V$, we have
\[ M(\theta^{a}) = M(\theta^{b}) \Longrightarrow  \theta^{a}= \theta^{b}. \]
\end{enumerate}
\end{proposition}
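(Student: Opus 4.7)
My plan is to prove (i) by expressing $M_\ell$ as a finite product/sum of elementary building blocks depending on $\theta$, then deriving the stated estimates by a pointwise Taylor expansion, and to prove (ii) by inspecting only the first recursion step, where the special form $M_0=\pm M_e$ collapses $R_{\phi_1}R_x(\alpha_1)R_{\phi_1}^\top M_0$ to an explicit vector whose entries depend only on $\alpha_1,\phi_1$, not on $\theta$.

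\paragraph{Part (i): differentiability and the two estimates.}
First I would isolate the scalar maps $\theta\mapsto e^{-TR_\ell/T_i}$ and $\theta\mapsto 1-e^{-TR_\ell/T_1}$. Since $V$ consists of functions bounded away from $0$, on any sufficiently small $L^\infty$-ball around $\theta$ the compositions $\beta\mapsto e^{-TR_\ell\beta}\circ(T\mapsto 1/T)$ are $C^\infty$ with all derivatives uniformly bounded. Thus each entry of $E_1(TR_\ell,\theta)$ and the scalar $E_2(TR_\ell,\theta)$ is Fr\'echet-differentiable as a map $V\to L^\infty(\om)$ with bounded derivative, via the pointwise Taylor formula plus dominated convergence. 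Since \eqref{eq:Bloch_discrete} expresses $M_\ell(\theta)$ as a finite sum of finite products of these scalars (the rotations $R_{\phi_\ell},R_x(\alpha_\ell)$ are $\theta$-independent constants), the product/chain rule in $L^\infty$ yields Fr\'echet differentiability of $M:V\to[\mathcal{Y}]^L$ with bounded derivative.

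To upgrade to estimate \eqref{eq:Lq_estimate_M} I would use the second-order Taylor remainder pointwise: there exists $C>0$ (uniform for $\theta+h$ in a small $L^\infty$-ball) such that
\[
\bigl|M_\ell(\theta(r)+h(r))-M_\ell(\theta(r))-M_\ell'(\theta(r))h(r)\bigr|\le C\,|h(r)|^{2}\quad\text{a.e. } r\in\om.
\]
Integrating the square and using $|h(r)|^2\le\|h\|_{L^\infty}|h(r)|$ gives
\[
\|M_\ell(\theta+h)-M_\ell(\theta)-M_\ell'(\theta)h\|_{\mathcal{Y}}\le C\,\|h\|_{[L^\infty]^{2}}\,\|h\|_{[L^{2}]^{2}},
\]
which is $o(\|h\|_{[L^{q}]^{2}})$ for every $q\ge 2$ once one also uses the continuous embedding $L^q(\om)\hookrightarrow L^2(\om)$ on the bounded domain $\om$ (the case $q=\infty$ being immediate). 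For \eqref{eq:L2_bound_M} I would differentiate \eqref{eq:Bloch_discrete} term by term; every term in $M_\ell'(\theta)h$ is of the form (bounded $\theta$-dependent matrix/scalar)$\cdot$(derivative of an exponential)$\cdot h\cdot$($M_0$ or $M_e$). Under $M_0\in[L^\infty(\om)]^3$ all prefactors are in $L^\infty$, so pulling out the $L^\infty$-norm yields the bound $C\|h\|_{[L^{2}]^{2}}$.

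\paragraph{Part (ii): injectivity.}
Working pointwise in $r$, I would evaluate the first recursion step with $M_0=\pm M_e=(0,0,\mp 1)^\top$. Because $R_{\phi_1}$ is a rotation about the $z$-axis, it fixes $M_0$, whence
\[
w_1:=R_{\phi_1}R_x(\alpha_1)R_{\phi_1}^\top M_0 = \mp\bigl(\sin\phi_1\sin\alpha_1,\;\cos\phi_1\sin\alpha_1,\;\cos\alpha_1\bigr)^\top,
\]
which is $\theta$-independent. Hence
\[
M_1(\theta)=\bigl(w_1^x\,e^{-TR_1/T_2},\;w_1^y\,e^{-TR_1/T_2},\;w_1^z\,e^{-TR_1/T_1}+1-e^{-TR_1/T_1}\bigr)^\top.
\]
Assuming $M(\theta^a)=M(\theta^b)$, the first two components force $e^{-TR_1/T_2^a}=e^{-TR_1/T_2^b}$, which gives $T_2^a=T_2^b$ because $\sin\alpha_1\neq 0$ on $(0,\pi)$. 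The third component yields $(w_1^z\mp 1)(e^{-TR_1/T_1^a}-e^{-TR_1/T_1^b})=0$, and $w_1^z\mp 1\neq 0$ since $\alpha_1\in(0,\pi)$ excludes $\cos\alpha_1=\pm 1$; therefore $T_1^a=T_1^b$. Hence $\theta^a=\theta^b$ a.e.\ on $\om$.

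\paragraph{Main obstacle.}
The only genuinely delicate point is the sharpened error estimate \eqref{eq:Lq_estimate_M}: differentiability in $L^\infty$ is routine, but showing that the Taylor remainder is $o(\|h\|_{L^q})$ for $q$ possibly as small as $2$ requires extracting one factor of $\|h\|_{L^\infty}$ from the quadratic remainder (to absorb into the $o(\cdot)$) while keeping the other factor in the weaker $L^q$ norm. The rest of (i) is bookkeeping through the recursion, and part (ii) reduces to the one-step calculation above.
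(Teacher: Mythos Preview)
Your argument is correct. Part (i) follows the paper's own route almost verbatim: reduce to the scalar exponentials, obtain the pointwise quadratic Taylor remainder $|M_\ell(\theta+h)-M_\ell(\theta)-M_\ell'(\theta)h|\le C|h|^2$, and then pass to the $L^q$ estimate. Your way of extracting one $\|h\|_{L^\infty}$ factor and bounding the remaining $\|h\|_{L^2}$ by $\|h\|_{L^q}$ via the embedding is in fact slightly more transparent than the paper's version, which applies the embedding to $|h|^2$ instead; the content is the same.

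For part (ii) you take a genuinely more elementary path than the paper. Both proofs use only $M_1$, but the paper argues by contradiction at the matrix level: it writes $(E_1(\theta^a)-E_1(\theta^b))R(\alpha_1)M_0+(E_2(\theta^a)-E_2(\theta^b))M_e=0$, inverts the $T_1$-block to express $M_e$ through $R(\alpha_1)M_0$, and then argues that for $M_0=\pm M_e$ this forces the excluded case $\alpha_1=\pi$. You instead exploit directly that $R_{\phi_1}^\top$ fixes $M_0=\pm M_e$, compute $w_1=R(\alpha_1)M_0$ explicitly, and read off injectivity component by component. Your approach is shorter and makes the role of the hypothesis $\alpha_1\in(0,\pi)$ completely transparent (it guarantees $\sin\alpha_1\neq 0$ for the transverse components and $\cos\alpha_1\neq\pm 1$ for the longitudinal one); the paper's version is slightly more uniform but obscures this. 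Two minor sign slips: with $M_0=\pm M_e$ one gets $w_1=\pm(\sin\phi_1\sin\alpha_1,\cos\phi_1\sin\alpha_1,\cos\alpha_1)^\top$ (not $\mp$), and the third-component factor is always $(w_1^z-1)$, not $(w_1^z\mp 1)$, since the $+1$ comes from $E_2 M_e$ independently of the sign of $M_0$. Neither affects the conclusion, because in both cases $w_1^z-1\in\{\cos\alpha_1-1,-\cos\alpha_1-1\}$ is nonzero on $(0,\pi)$.
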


\begin{proof}
Due to the recursive nature of $M_\ell$, it suffices to analyse $M_{1}$:
\[M_1(\theta) =E_1(TR_1,\theta)R_\phi R_x(\alpha_{1})R_\phi^\top M_0 +E_2(TR_1,\theta)M_e.\]
\item[(i)] 
We start by considering the differentiability of $M_1(\theta)$.
This is readily derived when using the differentiability of $x\mapsto e^{-\frac{TR}{x}}$ for $x>0$.
We denote by $M^\prime_1(\theta)$ the Fr\'echet derivative of the map $M_{1}$ evaluated at $\theta$, that is $M_{1}'(\theta): [L^{\infty}(\Omega)]^{2}\to \mathcal{Y}$ bounded, linear such that
\begin{equation}\label{eq:M_frechet}
\lim_{h\to 0}\frac{\|M_{1}(\theta+h)-M_{1}(\theta) - M_{1}'(\theta)h\|_{\mathcal{Y}}}{\|h\|_{[L^{\infty}(\Omega)]^{2}}} =0.
\end{equation}

To simplify the formulas, for every $\ell=1,\ldots, L$, we denote
\[U_1(\ell):= \left(\begin{array}{ccc}
0 & 0 & 0\\
 0  &0& 0\\
  0 & 0& \frac{TR_{\ell}}{(T_1)^2} e^{-\frac{TR_{\ell}}{T_1 }} 
\end{array}\right)R(\alpha_{\ell})  ,\]
and
\[
U_2(\ell):= \left( \begin{array}{ccc}
\frac{TR_{\ell}}{(T_2)^2}e^{-\frac{TR_{\ell}}{T_2 }} & 0 & 0\\
 0  &\frac{TR_{\ell}}{(T_2)^2}e^{-\frac{TR_{\ell}}{T_2 }}  & 0\\
  0 & 0& 0 
\end{array}\right) R(\alpha_{\ell}). \]
We compute
\begin{equation}
\label{eq:M1_derivative}
M^\prime_1(\theta)
=\left(
M^\prime_{1,1}(\theta), \;  M^\prime_{1,2}(\theta)
\right):=
\left(
U_1(1) M_0  - \frac{TR_1}{T_1^2}e^{-\frac{TR_1}{T_1 }}  M_e, \;
U_2(1) M_0 
\right).
\end{equation}
Note that $M'_{1}(\theta)\in \mathcal{Y}\times \mathcal{Y}=[L^{2}(\Omega)]^{3}\times [L^{2}(\Omega)]^{3}$. It can be regarded as a bounded linear operator from $[L^{\infty}(\om)]^{2}\to \mathcal{Y}\times\mathcal{Y}$ which  is defined for every $h=(h_{1},h_{2})\in [L^{\infty}(\om)]^{2}$ as
\begin{align*}
M_{1}'(\theta)h
&=M_{1,1}'(\theta) h_{1}+ M_{1,2}'(\theta)h_{2}\\
&:=\left ([M_{1,1}'(\theta)]_{x} h_{1},[M_{1,1}'(\theta)]_{y}h_{1},[M_{1,1}'(\theta)]_{z}h_{1} \right)\\
&\qquad +\left ([M_{1,2}'(\theta)]_{x} h_{2},[M_{1,2}'(\theta)]_{y}h_{2},[M_{1,2}'(\theta)]_{z}h_{2} \right),
\end{align*}
where $[\cdot]_x$, $[\cdot]_y$, $[\cdot]_z$ denote components of a vector. The multiplication of $L^{2}(\om)$- and $L^{\infty}(\om)$-functions is understood in a pointwise sense, and the resulting product is in $L^{2}(\om)$.
Using the fact that $(e^{-\frac{TR}{x}})'=\frac{TR}{x^2}e^{-\frac{TR}{x}}$ is Lipschitz continuous over $x\in (0,\infty)$ for every fixed $TR>0$ (actually the derivative of every order of $e^{-\frac{TR}{x}}$ is Lipschitz continuous), then the following pointwise estimate holds true:
\begin{equation}\label{eq:point_derivative}
\abs{M_{1}(\theta(r)+h(r))-M_{1}(\theta(r)) - M_{1}'(\theta(r))h(r)} \leq C \abs{h(r)}^2,\; \text{ for all } r\in \om,
\end{equation}
where $C$ is a constant independent of $h$ and $\theta$, as well independent on $r$.
Note that 
\[\norm{\abs{h}^2}_{L^2(\om)}\leq C_q \norm{\abs{h}^2}_{L^q(\om)},\quad \text{ for all } \;q\geq 2 \text{ and } q=+\infty ,\]
where $C_q$ is a also constant independent of $h$ and $\theta$. 
Then  for all $h\in [L^{\infty}(\om)]^2$ sufficiently small
\[ \frac{\|M_{1}(\theta+h)-M_{1}(\theta) - M_{1}'(\theta)h\|_{\mathcal{Y}}}{\|h\|_{[L^{q}(\om)]^{2}}} \leq   \frac{C\norm{\abs{h}^2}_{L^2(\om)}}{\norm{h}_{[L^{q}(\om)]^{2}}}\leq \frac{CC_q\norm{\abs{h}^2}_{L^q(\om)}}{\norm{h}_{[L^{q}(\om)]^{2}}},
\]
and with this we get \eqref{eq:Lq_estimate_M} for $M_1$.

The Frech\'et differentiability of $M_1$ in the space $[L^{\infty}(\om)]^2$ is then a consequence of the above estimate, where the constant $C_q=C_\infty=\abs{\Omega}$. In this case, it then implies  \eqref{eq:M_frechet} which gives us the conclusion.

The derivative of $M_\ell(\theta)$ for $\ell>1$ can then be calculated by applying the chain rule to the recursion formula \eqref{eq:IR_bSSFP}, i.e., 
\begin{equation}
\label{eq:Mn_derivative}
M^\prime_\ell(\theta)=\left(
\begin{array}{ll}
\left(U_1(\ell) M_{\ell-1}(\theta)+ E_1(TR_{\ell},\theta)R(\alpha_{\ell}) M^\prime_{\ell-1,1}(\theta) - \frac{TR_{\ell}}{T_1^2}e^{-\frac{TR_{\ell}}{T_1 }}  M_e\right)^\top\\
\left(U_2(\ell) M_{\ell-1}(\theta) +  E_1(TR_{\ell},\theta)R(\alpha_{\ell})  M^\prime_{\ell-1,2}(\theta)\right)^\top
\end{array}\right)^\top.
\end{equation}
We get the boundedness of the derivatives because all the quantities $U_a(\ell)$, $R(\alpha_{\ell})$, $E_1(TR_{\ell},\theta)$ and $e^{-\frac{TR_{\ell}}{T_a }} \frac{TR_{\ell}}{T_a^2} $ for $a=1,2$ and  $\ell=1,\ldots,L$ are uniformly bounded.
If in addition we have $M_0\in [L^\infty(\om)]^3$, the iteration \eqref{eq:Bloch_discrete} will assure that $M_\ell\in [L^\infty(\om)]^3$. Then the estimate \eqref{eq:L2_bound_M} immediately follows.

\item[(ii)] 
We show that the map $M_{1}:[L^{\infty}(\Omega)]^{2}\to \mathcal{Y}$ is injective
for some non-zero $\alpha_1$ and $TR_1$.
We first note that $R=R(\alpha_1):=R_\phi R_x(\alpha_1)R_\phi^\top$ is unitary,
and $E_1(TR_1,\theta)$ and $E_2(TR_1,\theta)$ are contraction operators.
Assume now that $M_1(\theta^a)=M_1(\theta^b)$ for $\theta^a\neq \theta^b$. Then we have
\[\big (E_1(TR_1,\theta^a)-E_1(TR_1,\theta^b)\big)RM_0 +(E_2(TR_1,\theta^a)-E_2(TR_1,\theta^b))M_e=0\quad\text{in }\mathcal{Y}.\]
Assume further that $T_{1}^{a}\neq T_{1}^{b}$, then in those points of $\Omega$ where this occurs we have (suppressing spatial dependence $r$)
\begin{equation}\label{Me_M0}
M_e=\left(
\begin{array}{ccc}
\frac{e^{-\frac{TR_1}{T_2^a}} -e^{-\frac{TR_1}{T_2^b}}}{e^{-\frac{TR_1}{T_1^a}}-e^{-\frac{TR_1}{T_1^b}}} & 0 & 0\\
 0  &\frac{e^{-\frac{TR_1}{T_2^a}}-e^{-\frac{TR_1}{T_2^b}} }{e^{-\frac{TR_1}{T_1^a}}-e^{-\frac{TR_1}{T_1^b}}}& 0\\
  0 & 0& \frac{e^{-\frac{TR_1}{T_1^a}} -e^{-\frac{TR_1}{T_1^b}} }{e^{-\frac{TR_1}{T_1^a}}-e^{-\frac{TR_1}{T_1^b}}}
\end{array}\right) R M_0.
\end{equation}
Suppose now that $M_0=- M_e$. Then, since $R$ is unitary, \eqref{Me_M0} is satisfied if and only if $T_1^a=T_2^a$,  $T_1^b=T_2^b$,  $-RM_e=M_e$ and $\alpha_1=\pi$. 
This, however, contradicts $\alpha\in (0,\pi)$. The case $M_{0}=M_{e}$ is similar. 
If $T_{1}^{a}= T_{1}^{b}$ but $T_{2}^{a}\neq T_{2}^{b}$, then one uses the inverse relation of \eqref{Me_M0} and arrives at the same conclusion.
Thus we have injectivity for $M_{1}$ and hence also of $M$.
\end{proof}

Regarding non-convexity of the Bloch manifold we have the following result.
\begin{proposition}
\label{prop:dis_bloch_non_convex}
Suppose that the assumptions of Proposition \ref{prop:dis_bloch} hold true. Furthermore,
let the operator $M$ be restricted to some feasible set $\mathcal{C}_{ad}$ which is connected and convex: 
\[\mathcal{C}_{ad}:=\{\theta\in [L^{\infty}(\Omega)]^{2}\; |\; \theta(r)\in C_{ad}, \text{ for every }\; r\in \Omega \},\]
where $C_{ad}$ is a convex subset of $\R^{+}\times \R^{+}$ (typically a box) which is bounded and bounded away from zero. 
Then the image $M[\mathcal{C}_{ad}]$ of $M:\mathcal{C}_{ad} \to \mathcal{Y}^{L}$ is a non-convex subset of $\mathcal{Y}^{L}$.
\end{proposition}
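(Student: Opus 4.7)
My plan is to exhibit $\theta^a, \theta^b \in \mathcal{C}_{ad}$ for which the image midpoint $\tfrac{1}{2}(M(\theta^a)+M(\theta^b))$ lies outside $M[\mathcal{C}_{ad}]$, thereby violating convexity. I will take $\theta^a,\theta^b$ to be spatially constant functions on $\Omega$ with $T_1^a \neq T_1^b$ \emph{and} $T_2^a \neq T_2^b$; such choices exist because $C_{ad}$ is a convex box with non-empty interior. The corresponding midpoint is then spatially constant, so the question reduces to the pointwise, finite-dimensional claim: there is no $\tau \in C_{ad}$ for which $M(\tau) = \tfrac{1}{2}(M(\theta^a)+M(\theta^b))$.

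To analyse this, I will introduce the shorthand $a_\ell := e^{-TR_\ell/T_1}$ and $b_\ell := e^{-TR_\ell/T_2}$ and study the recursion \eqref{eq:IR_bSSFP} step by step. The first step $M_1$ is an \emph{affine} function of $(a_1, b_1)$, and its linear-part coefficients (involving $\sin\alpha_1$ and $1+\cos\alpha_1$) are nonzero under the assumption $\alpha_1 \in (0,\pi)$. Matching the components of $M_1(\tau) = \tfrac{1}{2}(M_1(\theta^a)+M_1(\theta^b))$ will therefore force the affine averaging relations
\begin{equation*}
a_1^\tau = \tfrac{1}{2}(a_1^a + a_1^b),\qquad b_1^\tau = \tfrac{1}{2}(b_1^a + b_1^b).
\end{equation*}
The crucial next step is $M_2$: the product $E_1(TR_2, \theta)\, R(\alpha_2)\, M_1(\theta)$ introduces quadratic cross terms of the form $b_2 a_1$, $a_2 b_1$, etc. Feeding the averaging relations above into the midpoint identity required for $M_2$ produces a nontrivial polynomial obstruction. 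For the most symmetric choice $\alpha_\ell = \pi/2$, $\phi_\ell = 0$, $TR_\ell = TR$ and $M_0 = -M_e$, a short calculation from \eqref{eq:IR_bSSFP} gives $M_2(\theta) = (0,\ b(1-a),\ 1-a+ab)^\top$, and the constraint from the $y$-component simplifies to
\begin{equation*}
(a^a - a^b)(b^a - b^b) = 0,
\end{equation*}
contradicting the initial choice $T_1^a \neq T_1^b$ and $T_2^a \neq T_2^b$.

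The main obstacle will be extending this explicit calculation, tailored to a very symmetric pulse sequence, to the full generality of the assumptions stated in Proposition \ref{prop:dis_bloch} (arbitrary $\alpha_\ell \in (0,\pi)$, $\phi_\ell$, $TR_\ell>0$ and essentially arbitrary $M_0 \in \mathcal{Y}$). The key observation is that the coefficients of the step-two quadratic cross terms are real-analytic in $(\{\alpha_\ell\}, \{\phi_\ell\}, \{TR_\ell\}, M_0)$, so they are nonzero outside a nowhere-dense degenerate locus; outside that locus the matching argument carries through verbatim. For degenerate sequences where the step-two coefficients collapse, the same logic has to be pushed to $M_\ell$ for some $\ell \geq 3$, exploiting the fact that the exponential dependence on $1/T_1$ and $1/T_2$ cannot be rendered globally affine in $(T_1,T_2)$ through any consistent reparametrization of the Bloch recursion. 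Handling this genericity step cleanly is where the real care is required; the core computation itself is routine.
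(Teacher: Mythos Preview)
Your high-level strategy matches the paper's: reduce to the pointwise finite-dimensional problem, match $M_1$ and $M_2$ at a midpoint, and derive an algebraic obstruction. Your explicit calculation for $\alpha_\ell=\pi/2$, $\phi_\ell=0$, $TR_\ell=TR$, $M_0=-M_e$ is correct and yields $(a^a-a^b)(b^a-b^b)=0$ exactly as you claim.

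The gap is in your extension to general pulse sequences. The proposition is a universal statement: for \emph{every} choice of $\alpha_\ell\in(0,\pi)$, $\phi_\ell$, $TR_\ell>0$ (and the relevant $M_0$), the image is non-convex. Your real-analyticity argument establishes the obstruction only on a dense open set of pulse sequences, and the remark that ``for degenerate sequences\ldots the same logic has to be pushed to $M_\ell$ for some $\ell\ge 3$'' is a hope, not a proof. You never show that for \emph{every} admissible sequence some $M_\ell$ eventually produces a nonvanishing cross-term coefficient, and there is no obvious induction that does this.

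The paper sidesteps this entirely by carrying out the $M_1,M_2$ computation with general $\alpha_\ell,\phi_\ell,TR_\ell$ from the outset. It extracts, among the resulting scalar identities, the \emph{decoupled} pairs
\[
\lambda\,e^{-TR_1/T_i^a}+(1-\lambda)\,e^{-TR_1/T_i^b}=e^{-TR_1/T_i^\lambda},\qquad
\lambda\,e^{-(TR_1+TR_2)/T_i^a}+(1-\lambda)\,e^{-(TR_1+TR_2)/T_i^b}=e^{-(TR_1+TR_2)/T_i^\lambda},
\]
for $i=1,2$. Writing $x=e^{-TR_1/T_i^a}$, $y=e^{-TR_1/T_i^b}$, $z=e^{-TR_1/T_i^\lambda}$ and $r=(TR_1+TR_2)/TR_1>1$, these become $z=\lambda x+(1-\lambda)y$ and $z^r=\lambda x^r+(1-\lambda)y^r$, which by strict convexity of $t\mapsto t^r$ force $x=y$, i.e.\ $T_i^a=T_i^b$. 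This argument is uniform in $\alpha_\ell,\phi_\ell$ (one only uses $\alpha_\ell\in(0,\pi)$ to guarantee that the relevant coefficients $\sin\alpha_\ell$ and $1+\cos\alpha_\ell$ are nonzero so the equations can actually be read off), so no genericity hypothesis is needed. If you redo your $M_2$ calculation symbolically rather than at $\alpha=\pi/2$, you will find exactly these decoupled constraints, and the proof closes without any appeal to analyticity or to $M_\ell$ with $\ell\ge 3$.
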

\begin{proof}
Suppose that $M[\mathcal{C}_{ad}]$ is a convex subset of $\mathcal{Y}^{L}$.
Then, for arbitrary $\theta^a \neq \theta^b\in \mathcal{C}_{ad}$ and for every $\lambda \in (0,1)$, there exist $\theta^\lambda\in \mathcal{C}_{ad}$ such that
\begin{equation}\label{eq:convexity}
\lambda M_\ell(\theta^a)+ (1-\lambda) M_\ell(\theta^b)= M_\ell(\theta^\lambda) \text{ for all } \ell\in \set{1,\ldots,L}.
\end{equation}
We focus on the first two components $M_{1}, M_{2}\in \mathcal{Y}$ and
recall
\begin{align*}
M_1(\theta) =& E_1(TR_1,\theta)R(\alpha_1) M_0 +E_2(TR_1,\theta)M_e,\\
M_2(\theta) =& \left(\prod_{k=1}^2 E_1(TR_k,\theta) R(\alpha_k)\right) M_0 \\
            &\quad +\ \left( E_2(TR_1,\theta)+ E_2(TR_2,\theta) E_1(TR_1,\theta)R(\alpha_1)\right)M_e.
\end{align*}
 {With some straightforward calculations and simplifications}, the convexity condition \eqref{eq:convexity} can be equivalently written as the following system of equations (where $\theta^a=(T^a_1,T^a_2)$, $\theta^b=(T^b_1,T^b_2)$,  $\theta^\lambda=(T^\lambda_1,T^\lambda_2)$):
\begin{align*}
&\lambda e^{-\frac{TR_1}{T^a_1}} +(1-\lambda) e^{-\frac{TR_1}{T^b_1}} = e^{-\frac{TR_1}{T^\lambda_1}},\\
&\lambda e^{-\frac{TR_1}{T^a_2}} +(1-\lambda) e^{-\frac{TR_1}{T^b_2}} = e^{-\frac{TR_1}{T^\lambda_2}},\\
&\lambda e^{-\frac{TR_2+TR_1}{T^a_1}} +(1-\lambda)e^{-\frac{TR_2+TR_1}{T^b_1}}  = e^{-\frac{TR_2+TR_1}{T^\lambda_1}},\\
&\lambda e^{-\frac{TR_2+TR_1}{T^a_2}}  +(1-\lambda)e^{-\frac{TR_2+TR_1}{T^b_2}}  = e^{-\frac{TR_2+TR_1}{T^\lambda_2}},\\
&\lambda e^{-\frac{TR_1}{T^a_2}}e^{-\frac{TR_2}{T^a_1}}  +(1-\lambda)e^{-\frac{TR_1}{T^b_2}}e^{-\frac{TR_2}{T^b_1}} = e^{-\frac{TR_1}{T^\lambda_2}}e^{-\frac{TR_2}{T^\lambda_1}} .
\end{align*}
Since $TR>0$,  {taking $\lambda$ as the unknown of the above linear system, then it has a solution only if $\theta^a=\theta^b=\theta^\lambda$. This shows that there exists no $\theta^\lambda$ for $\theta^a\neq \theta^b$ which gives the contradiction.}
\end{proof}

The asserted non-convexity in Proposition \ref{prop:dis_bloch_non_convex} yields a disadvantage for methods based on projections of the magnetization reconstruction sequences onto the Bloch manifold, as projections need no longer be unique. One specific instance of such a method is BLIP.

Concerning the MRI data, it is quite natural to assume $D\in \left ([L^2(K)]^{2}\right)^{L}$ where $K$ denotes a compact frequency domain.
Now we are in a position to show the Fr\'echet differentiability of the qMRI-operator.
\begin{lemma}
Let $ \f {x} = (\rho,\theta)\in \tilde{V}\subset L^{\infty}(\om)\times [L^{\infty}(\om)]^{2}$, where $\tilde{V}$ is the open subset  with  functions of strictly positive values only. 
Then the qMRI-operator
\[Q: \tilde{V} \to \left([L^2(K)]^{2}\right)^{L}\]
is Fr\'echet differentiable.
Similarly, given $M_0\in [L^{\infty}(\om)]^3$ we have the following general estimate for sufficiently small $\f h\in  L^{\infty}(\om)\times [L^{\infty}(\om)]^{2}$:
\begin{equation}\label{eq:Lq_estimate_Q}
\norm{Q(\f x+\f h)-Q(\f x) - Q'(\f x)\f h}_{\left([L^2(K)]^{2}\right)^{L}}= o\left(\norm{\f h}_{[L^{2}(\om)]^{3}}\right).
\end{equation}
\end{lemma}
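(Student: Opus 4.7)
The plan is to write $Q(\f x + \f h) - Q(\f x)$ explicitly using the product structure of $Q$, identify a candidate Fréchet derivative by collecting the terms linear in $\f h$, and show that what remains is $o(\norm{\f h}_{[L^2(\om)]^{3}})$ using the two estimates from Proposition \ref{prop:dis_bloch}. Writing $\f h=(h_\rho,h_\theta)$ with $h_\rho\in L^{\infty}(\om)$ and $h_\theta\in[L^{\infty}(\om)]^{2}$, the product rule gives
\[
(\rho+h_\rho)\,M(\theta+h_\theta)-\rho M(\theta)= h_\rho M(\theta) + \rho M'(\theta)h_\theta + \rho R(h_\theta) + h_\rho\bigl(M(\theta+h_\theta)-M(\theta)\bigr),
\]
where $R(h_\theta):=M(\theta+h_\theta)-M(\theta)-M'(\theta)h_\theta$. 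This suggests defining
\[
Q'(\f x)\f h := P\mathcal{F}\bigl[h_\rho\,T_{x,y}M(\theta) + \rho\,T_{x,y}M'(\theta)h_\theta\bigr].
\]

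Next, I would verify that this is a bounded linear map from $L^{\infty}(\om)\times[L^{\infty}(\om)]^{2}$ into $\bigl([L^{2}(K)]^{2}\bigr)^{L}$. The sub-sampling $P$ and Fourier transform $\mathcal{F}$ are bounded on $[L^{2}(K)]^{2}$, $T_{x,y}$ is a coordinate projection, $\rho\in L^{\infty}(\om)$, and $M(\theta)\in[L^{\infty}(\om)]^{3}$ since the recursion \eqref{eq:Bloch_discrete} preserves $L^{\infty}$-bounds when $M_{0}\in[L^{\infty}(\om)]^{3}$. The linear estimate \eqref{eq:L2_bound_M} then gives $\norm{\rho\,T_{x,y}M'(\theta)h_\theta}_{[L^{2}(\om)]^{2}}\leq C\norm{h_\theta}_{[L^{2}(\om)]^{2}}$, so $Q'(\f x)$ is continuous as a map from $[L^{2}(\om)]^{3}$ into $\bigl([L^{2}(K)]^{2}\bigr)^{L}$, which is in fact stronger than required.

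For the remainder, the cancellation above produces
\[
Q(\f x+\f h)-Q(\f x)-Q'(\f x)\f h = P\mathcal{F}T_{x,y}\bigl[\rho R(h_\theta) + h_\rho(M(\theta+h_\theta)-M(\theta))\bigr],
\]
and it suffices, by boundedness of $P\mathcal{F}T_{x,y}$, to estimate the bracket in $[L^{2}(\om)]^{3}$. Hölder's inequality gives $\norm{\rho R(h_\theta)}_{L^{2}}\leq\norm{\rho}_{L^{\infty}}\norm{R(h_\theta)}_{L^{2}}=o(\norm{h_\theta}_{L^{2}})$ directly from \eqref{eq:Lq_estimate_M} with $q=2$. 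For the cross-term, I would combine \eqref{eq:L2_bound_M} and \eqref{eq:Lq_estimate_M} to deduce the local Lipschitz bound $\norm{M(\theta+h_\theta)-M(\theta)}_{[\mathcal{Y}]^{L}}\leq C\norm{h_\theta}_{[L^{2}(\om)]^{2}}$ for $h_\theta$ sufficiently small in $L^{\infty}$, and then estimate $\norm{h_\rho(M(\theta+h_\theta)-M(\theta))}_{L^{2}}\leq\norm{h_\rho}_{L^{\infty}}\cdot C\norm{h_\theta}_{L^{2}}$.

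The main obstacle is this bilinear cross-term: it is formally second-order in $\f h$, but the asserted estimate measures $\f h$ in $[L^{2}(\om)]^{3}$ rather than in $L^{\infty}$, so we cannot simply quote a quadratic bound in the domain norm. The key observation is that since $\f h$ is assumed small in $L^{\infty}$, the factor $\norm{h_\rho}_{L^{\infty}}$ may be absorbed into the $o(\cdot)$, leaving only an $L^{2}$-factor in $h_\theta$; this is exactly the point where the hypothesis $M_{0}\in[L^{\infty}(\om)]^{3}$ is indispensable, since without \eqref{eq:L2_bound_M} one could only control $M(\theta+h_\theta)-M(\theta)$ by $\norm{h_\theta}_{L^{\infty}}$, which would be incompatible with \eqref{eq:Lq_estimate_Q}.
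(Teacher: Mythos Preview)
Your proposal is correct and follows essentially the same approach as the paper: you identify the same candidate derivative $Q'(\f x)\f h = P\mathcal{F}\bigl[h_\rho\,T_{x,y}M(\theta) + \rho\,T_{x,y}M'(\theta)h_\theta\bigr]$ and control the remainder via the two estimates \eqref{eq:Lq_estimate_M} and \eqref{eq:L2_bound_M} from Proposition~\ref{prop:dis_bloch}. The only cosmetic difference is that the paper adds and subtracts $P^{(\ell)}\mathcal{F}(\rho\,T_{x,y}M_\ell(\theta+h_\theta))$ and bounds the cross-term as $C\|h_\rho h_\theta\|_{[L^2(\Omega)]^2}$, whereas you write out the product rule directly and bound the same cross-term by $\|h_\rho\|_{L^\infty}\cdot C\|h_\theta\|_{L^2}$; these are equivalent once one uses H\"older's inequality and the $L^\infty$-smallness of $\f h$.
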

\begin{proof}
Recall that
\[Q(\f x)=\{Q^{(\ell)}(\f x)\}_{\ell=1}^{L}= \left\{P^{(\ell)}\mathcal{F}(\rho T_{x,y} M_{\ell}(\theta))\right\}_{\ell=1}^{L}.\]
To see the  Fr\'echet differentiability, we first notice that all $P^{(\ell)}$, $\mathcal{F}$ and $T_{x,y}$ are bounded, linear operators.
Then we consider
\begin{align*}
Q^{(\ell)}(\f x+\f h)  -Q^{(\ell)}(\f x) = &  P^{(\ell)}\mathcal{F}((\rho+h_\rho)  T_{x,y} M_{\ell}(\theta+h_\theta)) -P^{(\ell)}\mathcal{F}(\rho  T_{x,y}  M_{\ell}(\theta+h_\theta))  \\
  & \quad + P^{(\ell)}\mathcal{F}\left(\rho   T_{x,y} \left(M_{\ell}(\theta+h_\theta)- M_{\ell}(\theta)\right)\right).
\end{align*}
For every $\f {x}=(\rho, \theta) \in \tilde{V}$, and  $\f h=(h_\rho, h_\theta) \in L^{\infty}(\om)\times [L^{\infty}(\om)]^{2}$ small enough, applying the Fr\'echet differentiability of each $M_{\ell}$ from  Proposition \ref{prop:dis_bloch}, and using the estimates \eqref{eq:Lq_estimate_M} and \eqref{eq:L2_bound_M}, we get the estimates below:
\begin{align*}
     &\norm{ P^{(\ell)}\mathcal{F}((\rho+h_\rho)  T_{x,y} M_{\ell}(\theta+h_\theta)) -P^{(\ell)}\mathcal{F}(\rho  T_{x,y}  M_{\ell}(\theta+h_\theta)) }_{[L^2(K)]^{2}}\\
= & \norm{ P^{(\ell)}\mathcal{F}(h_\rho  T_{x,y} M_{\ell}(\theta+h_\theta)) }_{[L^2(K)]^{2}}\\
\leq &\norm{ P^{(\ell)}\mathcal{F}(h_\rho  T_{x,y} M_{\ell}(\theta))}_{[L^2(K)]^{2}}  + C\norm{h_\rho h_\theta }_{[L^2(\Omega)]^2} + o(\norm{h_\theta}_{[L^2(\Omega)]^2})
\end{align*}
and
\begin{align*}
 \norm{ P^{(\ell)}\mathcal{F}\left(\rho   T_{x,y} \left(M_{\ell}(\theta+h_\theta)- M_{\ell}(\theta)\right)\right)}_{[L^2(K)]^{2}}
 \leq & \norm{ P^{(\ell)}\mathcal{F}(\rho   T_{x,y} M'_{\ell}(\theta)h_\theta)}_{[L^2(K)]^{2}} \\
 & + o(\norm{h_\theta}_{[L^2(\Omega)]^2}).
\end{align*}
The two inequalities indicate  for sufficiently small $\f h\in  L^{\infty}(\om)\times [L^{\infty}(\om)]^{2}$
\begin{equation}\label{eq:es1}
\norm{Q^{(\ell)}(\f x+\f h)  -Q^{(\ell)}(\f x) - A^{(\ell)} \f h}_{[L^2(K)]^{2}}  = o\left(\norm{\f h}_{[L^{2}(\om)]^{3}}\right),
\end{equation}
where
\[
\begin{array}{llll}
A^{(\ell)}&:[L^{\infty}(\om)]^{3} &\to & [L^2(K)]^{2}, \\
          &:\f h &\mapsto  & P^{(\ell)}\mathcal{F}(h_\rho  T_{x,y} M_{\ell}(\theta)) +P^{(\ell)}\mathcal{F}(\rho   T_{x,y} M'_{\ell}(\theta)h_\theta),
\end{array}
\]
is a bounded linear operator.
Using \eqref{eq:es1} and the fact that
\[ \norm{\f h}_{[L^{2}(\om)]^{3}}\leq C \norm{\f h}_{[L^{\infty}(\om)]^{3}}  \]
we show that $Q^{(\ell)}$ is Fr\'echet differentiable, and $A^{(\ell)}$ is the derivative.
The derivative of $Q$ is obtained from derivatives of each $Q^{(\ell)}$ for $\ell\in \{1,\ldots,L\}$.
Finally the estimate \eqref{eq:Lq_estimate_Q} is obtained from \eqref{eq:es1}. 
\end{proof}
The above proof also presents a way of how to calculate the derivative of $Q$.
\begin{remark}
The estimates \eqref{eq:Lq_estimate_M} and \eqref{eq:Lq_estimate_Q} do not guarantee the differentiability of $M$ and $Q$ in the whole space $ [L^{2}(\om)]^{3}$ and  $ [L^{2}(\om)]^{3}$, respectively.
However, the Fr\'echet derivative $Q^{\prime}(\f x)$ at $\f x\in \tilde{V}$ as an operator from  $ [L^{2}(\om)]^{3}$ to $\left([L^2(K)]^{2}\right)^{L} $ is well defined given $M_0\in [L^{\infty}(\om)]^3$ which is applicable in practice.
\end{remark}

\begin{remark}
 {In order to simplify the presentation, we only consider $\rho:\Omega \rightarrow \R$ in the above analysis, that is $\rho$ is real-valued. It is, however, common practice to allow the proton density $\rho$ to be complex-valued in order to take into account the coil sensitivity and phase errors \cite{DavPuyVanWia14}. The analytical results and also the numerical algorithms can be easily extended to cover the case where $\rho:\Omega \rightarrow \C$ by just increasing the number of the unknown parameter functions. More precisely we can treat $\rho$ as two unknown parameters in the complex-valued case.
Later in the numerical part, Section \ref{subsec:exa_numerics},  we  give examples where $\rho$ is complex-valued.}
\end{remark}
\subsection{(Gauss-) Newton method for ideal data}
\label{subsec:Newton}

Next we turn towards iterative methods for computing a solution to the non-linear equation 
\begin{equation}\label{eq_Qx_D}
Q(\f x)=D,
\end{equation}
which in fact represents a system of equations
\[P^{(\ell)}\mathcal{F}(\rho T_{x,y} M_{\ell}(\theta))=D^{(\ell)},\quad \ell=1,\ldots,L.\]

Because of the regularity of the operator $Q$, a first idea to solve the non-linear operator equation \eqref{eq_Qx_D}  is using a Gauss-Newton method, which, given some approximate solution $\f x_n$, is based on the first-order approximation
\begin{equation}
\label{eq:Taylor}
Q(\f x_{n+1}) \simeq  Q(\f x_n) +  Q^{\prime}(\f x_n)\left( \f x_{n+1}  - \f x_{n} \right) = D. 
\end{equation}
By letting $D_n: = D- Q(\f x_n) +  Q^{\prime}(\f x_n) \f x_n $, \eqref{eq:Taylor} becomes
\begin{equation}\label{eq:Newton_equ}
Q^{\prime}(\f x_n)\f x_{n+1}  - D_n=0.
\end{equation}
Note that since $D=\{ D^{(\ell)}\}_{\ell=1}^{L}$ is a sequence of data frames of length $L$, so is $D_n$. 
Since typically, we have $L\geq 2$, (the space discrete version of) \eqref{eq:Newton_equ} in general contains redundant equations.
Thus, one considers \eqref{eq:Newton_equ} in a least-squares sense.
Taking into account also the physical constraint of the tissue parameters, we introduce the feasible set $\tilde{\mathcal{C}}_{ad}\subset [L^{\infty}(\Omega)]^{3}$ which is a connected and convex set (typically a box) and contains all feasible values for $\f x=(\rho,\theta)$. Finally, it leads to computing $\f x_{n+1}$ by solving
\begin{equation}
\label{eq:Gauss_Newton}
\f{x}_{n+1}=\underset{\f x\in \tilde{\mathcal{C}}_{ad}}{\operatorname{argmin}}\; \norm{  Q^{\prime}(\f x_n)\f x  - D_n}_{\left([L^2(K)]^2\right)^L}^2,\quad n=0,1,2\ldots \;  
\end{equation} 

The solution of the problem in \eqref{eq:Gauss_Newton} can be approximated by a  projection step to $\tilde{\mathcal{C}}_{ad}$, resulting to the following projected Gauss-Newton iteration:
\begin{align}
\label{eq:D_ite}
 D_n& = D- Q(\f x_n) +  Q^{\prime}(\f x_n) \f x_n, \\
\label{eq:GN_ite}
\f y_{n+1}&=(Q')^{\dag}(\f x_{n})D_{n} : = \left( (Q^{\prime}(\f {x}_n))^\top Q^{\prime}(\f {x}_n)\right)^{-1}(Q^{\prime}(\f {x}_n))^\top D_n ,\\
\label{eq:proj_GN}
\f x_{n+1}&=P_{\tilde{\mathcal{C}}_{ad}} \f y_{n+1}.
\end{align} 
We point out that the step in \eqref{eq:GN_ite} is regarded in a Hilbert space setting, i.e.,
\[Q^{\prime}(\f {x}_n):[L^2(\Omega)]^3 \rightarrow \left([L^2(K)]^2\right)^L,\;\text{ for }  n\in \N,\]
and $(Q^{\prime}(\f {x}_n))^\top$ is the Hermitian adjoint of the linear operator $Q^{\prime}(\f {x}_n)$. This can be done since,
as we have mentioned that $Q^{\prime}(\f {x}_n)$ is a well-defined linear operator for functions in $[L^2(\Omega)]^3$, and \eqref{eq:GN_ite} will give a solution $\f y_{n+1}\in [L^2(\Omega)]^3$.
The subsequent projection step \eqref{eq:proj_GN} assures that $\f x_{n+1}\in \tilde{\mathcal{C}}_{ad}\subset [L^\infty(\Omega)]^3$.
Supposing that $\tilde{\mathcal{C}}_{ad}:=\{\f x\in [L^2(\Omega)]^3:x_p(r)\in [\underline{C}_p,\overline{C}_p] \text{ for }p\in\{1,2,3\}\text{ a.e. }r\in\Omega\}$ for $\underline{C},\overline{C}\in\mathbb{R}^3$ with $\underline{C}_p<\overline{C}_p$ for $p\in\{1,2,3\}$ and $\f x=(x_1,x_2,x_3)^\top$, the projection can be realised by
\begin{equation}
\label{eq:threshold}
(P_{\tilde{\mathcal{C}}_{ad}} \f x)_p(r)=\left\{
\begin{array}{ll}
\underline{C}_p & \text{ for } x_p(r)\leq \underline{C}_p ,\\
x_p(r) & \text{ for } \underline{C}_p<x_p(r) <\overline{C}_p,\\
\overline{C}_p &\text{ for } \overline{C}_p \leq x_p(r)
\end{array}
\right. 
\end{equation}
for every $r\in\Omega$.
Different from the projection in BLIP algorithm, the projection in \eqref{eq:threshold} is uniquely defined because of the convexity of the feasible domain.
In particular, for an exact solution $\f x^{*}$ of \eqref{eq_Qx_D}, we assume that $\f x^* \in \tilde{\mathcal{C}}_{ad}$.
It is obvious that the non-expansiveness holds for the projection operator:
\begin{equation}\label{eq:non_expan}
\norm{\f x_{n+1}-\f x^*}_{[L^2(\om)]^3} \leq \norm{\f y_{n+1}-\f x^*}_{[L^2(\om)]^3}.
\end{equation}
We state here the result regarding superlinear convergence rate of the projected Gauss-Newton iteration \eqref{eq:D_ite}--\eqref{eq:proj_GN} given the Fr\'echet differentiability of the non-linear operator $Q$ and the general estimate \eqref{eq:Lq_estimate_Q}.
\begin{theorem}
\label{thm:superlinear}
Let $\f {x}^*\in \tilde{\mathcal{C}}_{ad}$ be an exact solution of \eqref{eq_Qx_D}, and
assume there exists a neighbourhood $N(\f x^\ast)\subset [L^\infty(\om)]^3 $ of $\f x^{\ast}$ such that $(Q^\prime)^{\dag}(\f {x})$ is uniformly bounded for all $\f x \in N(\f x^\ast)$. 
Then there exists a potentially smaller neighbourhood such that for every initial guess $\f x_{0}$ belonging there,  for the iterates in \eqref{eq:GN_ite} and \eqref{eq:proj_GN} we have that $\f x_{n}\to \f x^{\ast}$ with a superlinear rate of convergence, i.e., 
\begin{equation}
\label{eq:superlinear}
\norm{\f { x}_{n+1} -\f {x}^*}_{[L^2(\om)]^3} = o\left(\norm{\f { x}_{n} -\f {x}^*}_{[L^2(\om)]^3} \right) \; \text{ for all } n\in \N. 
\end{equation}
\end{theorem}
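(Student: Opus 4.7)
The plan is to follow the standard Gauss-Newton superlinear convergence argument, carefully accounting for the projection step and the mixed $L^{2}/L^{\infty}$ topology. I would set $e_n := \f x_n - \f x^{*}$ and argue inductively, assuming that $\f x_n$ lies in a neighbourhood $N(\f x^{*})$ on which $(Q')^{\dag}(\f x_n)$ is uniformly bounded by some constant $C > 0$.

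First I would substitute $D = Q(\f x^{*})$ into the expression \eqref{eq:D_ite} for $D_n$ and use the left-inverse identity $(Q')^{\dag}(\f x_n)\,Q'(\f x_n) = \mathrm{Id}$ (which is implicit in the well-posedness of the pseudo-inverse in \eqref{eq:GN_ite} and, locally near the solution, is for instance guaranteed by the injectivity established in Proposition \ref{prop:dis_bloch}(ii) when $M_0 = -M_e$) to obtain the key identity
\begin{equation*}
\f y_{n+1} - \f x^{*} = -(Q')^{\dag}(\f x_n)\bigl[Q(\f x_n) - Q(\f x^{*}) - Q'(\f x_n)(\f x_n - \f x^{*})\bigr].
\end{equation*}
The bracketed term is precisely the Fr\'echet remainder controlled by the estimate \eqref{eq:Lq_estimate_Q}.

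Combining the uniform bound on $(Q')^{\dag}$ with \eqref{eq:Lq_estimate_Q} then yields
\begin{equation*}
\norm{\f y_{n+1} - \f x^{*}}_{[L^{2}(\om)]^{3}} \leq C\,\norm{Q(\f x_n) - Q(\f x^{*}) - Q'(\f x_n)(\f x_n - \f x^{*})}_{([L^{2}(K)]^{2})^{L}} = o\bigl(\norm{e_n}_{[L^{2}(\om)]^{3}}\bigr).
\end{equation*}
Since $\f x^{*} \in \tilde{\mathcal{C}}_{ad}$, the non-expansiveness \eqref{eq:non_expan} of the projection gives $\norm{e_{n+1}}_{[L^{2}(\om)]^{3}} \leq \norm{\f y_{n+1} - \f x^{*}}_{[L^{2}(\om)]^{3}}$, which is exactly the superlinear estimate \eqref{eq:superlinear}.

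The main obstacle, which I would tackle last, is closing the induction: the hypothesis on $(Q')^{\dag}$ is posed on an $L^{\infty}$-neighbourhood $N(\f x^{*})$, whereas the contraction estimate above lives in the $L^{2}$-norm, so $L^{2}$-smallness of $e_n$ does not by itself keep $\f x_n$ in $N(\f x^{*})$. To handle this I would exploit that the projection keeps every iterate inside the bounded box $\tilde{\mathcal{C}}_{ad} \subset [L^{\infty}(\om)]^{3}$, so that $\norm{e_n}_{[L^{\infty}(\om)]^{3}}$ is a priori uniformly bounded; by shrinking the feasible box $C_{ad}$ (and correspondingly $\tilde{\mathcal{C}}_{ad}$) around $\f x^{*}$ if necessary to ensure $\tilde{\mathcal{C}}_{ad} \subset N(\f x^{*})$, one guarantees $\f x_n \in N(\f x^{*})$ for every $n\in\N$, closing the induction and completing the argument.
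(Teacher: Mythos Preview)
Your proposal is exactly the approach the paper indicates: the classical unconstrained Gauss--Newton superlinear-convergence argument (for which the paper simply cites \cite{ItoKun08}) combined with the non-expansiveness \eqref{eq:non_expan} of the projection onto the convex set $\tilde{\mathcal{C}}_{ad}$; indeed the paper omits the proof entirely on precisely those grounds. One minor caveat: your closing fix of shrinking $\tilde{\mathcal{C}}_{ad}$ so that $\tilde{\mathcal{C}}_{ad}\subset N(\f x^{*})$ would alter the very iteration \eqref{eq:proj_GN} whose convergence is being asserted, so it is not a legitimate move---but the $L^{2}/L^{\infty}$ subtlety you flag is not resolved by the paper either.
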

 {Since $\tilde{\mathcal{C}}_{ad}$ is convex and the projection is non-expansive, the proof of Theorem \ref{thm:superlinear} is rather similar to the proof for classical unconstrained problems, see, e.g., \cite{ItoKun08},  therefore it is omitted here.
}

Due to the non-linearity of the map $Q$ and non-convexity of $Q(\tilde{\mathcal{C}}_{ad})$, the iteration in \eqref{eq:GN_ite} will only converge for initial values $\f x_0\in \tilde{\mathcal{C}}_{ad}$ in a certain neighbourhood of the exact solution $\f x^*$,  provided that the data $D$ contains no noise.

For undersampled and noisy data, it is even more crucial to choose a good initial guess in order to obtain a robust and efficient numerical algorithm for solving the problem \eqref{eq:Q_equation}.
This would be the main task of the next section.

\subsection{A projected Levenberg-Marquardt method for undersampled and noisy data}
\label{subsec:LM_method}
Undersampling is often  unavoidable in the acquisition process of MRI due to time constraints. 
The main problem caused by undersampling is ill-posedness of the equation \eqref{eq:Q_equation} due to the properties of the operator $P$ composed into the qMRI-operator $Q$.
As a consequence, the solution of \eqref{eq:Q_equation} may be unreliable, even when the data is contaminated by noise of small intensity.

In order to address the problem of undersampling and noise, and to solve \eqref{eq:Q_equation} robustly, we turn to a projected \emph{Levenberg-Marquardt} (L-M) method  instead of the aforementioned projected Gauss-Newton scheme. 
Suppose that the ideal data $D$ has been corrupted by some noise, leading to perturbed data $D^{\delta}$.
Then the projected L-M iteration reads: Given $\f x_0\in \tilde{\mathcal{C}}_{ad}$ and a sequence $\{\lambda_n\}_{n\in\mathbb{N}}$ of positive real numbers, iterate for $n=\set{0,1,2,\ldots}$:
 \begin{align}
 \label{eq:data2}
 \tilde{D}^\delta_n& = D^\delta- Q(\f x_n),\\
 \label{eq:L-M}
\f h^\delta_n&=\underset{\f h}{\operatorname{argmin}}\; \norm{  Q^{\prime}(\f x_n)\f h  - \tilde{D}^\delta_n}_{\left([L^2(K)]^2\right)^L}^2 +\lambda_n \norm{\f h}_{[L^2(\om)]^3}^2, \; \\
 \label{eq:proj_LM}
\f x_{n+1}&= P_{\tilde{\mathcal{C}}_{ad}}(\f x_n + \f h^\delta_{n} ).
 \end{align}
where $P_{\tilde{\mathcal{C}}_{ad}}$ is the projection as defined in \eqref{eq:threshold}.

From a regularization point of view, the L-M iteration \eqref{eq:L-M} is nothing else but  an iterative Tikhonov regularization for solving a non-linear equation \cite{Han97,KalNeuSch08}.
Note that if $\lambda_n= 0$ for every $n$, then the L-M method becomes a Gauss-Newton method.
The convergence and convergence rates of L-M methods in the sense of regularization have been shown in many works; see, e.g., \cite{Han97}.
There, general rules of choosing the parameter of a form $\lambda_n=\lambda_0\beta^n$ for some $\lambda_0>0$, $\beta\in (0,1)$ are discussed, as well as a discrepancy principle of terminating the iterations at step $n=n_{e}$ where $n_{e}$ is the first iteration index such that the condition
\[\norm{Q(\f x_{n_e}) -D^\delta}_{\left([L^2(K)]^2\right)^L}\leq \varrho \delta \]
holds.
It is also shown that with these choices, the solution of the L-M method converges to a solution of the original  non-linear equation.
In our case this yields $\f x_{n_e} \rightarrow \f x^*$ as $\delta \rightarrow 0$.

The local and global convergence as well as rates of convergence of (projected) L-M algorithms have also been intensively studied; we refer to \cite{Deu04,FanYua05,YamFuk01,KanYamFuk04} for instance. In the absence of additive noise and with proper initial values,
the optimal convergence rates of the L-M algorithm are determined by the rates of the updated parameters $\lambda_n$, i.e., $ \norm{ \f x_n -\f x^*}_{[L^2(\om)]^3} =\mathcal{O} (\lambda_n)$.
In \cite{KanYamFuk04}, quadratic convergence rate of projected L-M algorithm for convex constraint has been proved in finite dimensional spaces.
For non-zero residual problems, i.e., in the presence of additive noise,
a standard L-M method with no projection usually only achieves a linear convergence rate
$ \norm{\f x_{n+1} -\f x_n}_{[L^2(\om)]^3} \leq C \norm{\f x_n -\f x_{n-1}}_{[L^2(\om)]^3}$
for some constant $C<1$.
With an additional convex constraint, in the case of non-zero residual problem, we expect that the projected L-M method will keep the convergence rate as the non-projected L-M for unconstrained problems, even though the convergence result seems to be more complicated than the zero residual problem.
We ignore the discussion in detail in this paper.

As for the (projected) Gauss-Newton iteration, initialization is  crucial for the (projected) L-M method.
Unfortunately, there is no general way to produce good initial guesses, rather this is a problem-dependent task. Here we suggest to use a very fast version of MRF (BLIP or other robust generations) in order to produce initial points in a neighbourhood of a solution. 
The low run-time of the initialization scheme is related to using a relatively coarse dictionary only. 
In this way, the dictionary is no longer refined in the L-M iterations. Having clarified this, our main proposed algorithm is summarized in Algorithm \ref{our_algorithm}.
\begin{algorithm}
\begin{itemize}
\item{Input and setting:}
\begin{itemize}
\item MRI data $D^{\delta}\in  \left([L^2(K)]^{2}\right)^{L}$;
\item Parameters for the physical setting of MRI, e.g., flip angle and repetition time sequences, $\{\alpha_{\ell}\}_{\ell=1}^{L}$, $\{TR_{\ell}\}_{\ell=1}^{L}$;
\item A coarse discretization of the set $C_{ad}=[T_{1}^{\min}, T_{1}^{\max}]\times [T_{2}^{\min}, T_{2}^{\max}]$.
\end{itemize}
\item{Initialization:} 
\begin{itemize}
\item Generate a dictionary $\mathrm{Dic}(C_{ad})$, using the coarse discretization of $C_{ad}$,
 the flip angles and the repetition times, with the help of the IR-bSSFP, formula \eqref{eq:Bloch_discrete} for magnetization;
 \item Use Algorithm \ref{alg:BLIP} (or other generations), to produce an initialization: $\f x_{0}=(\rho_{0},\theta_{0})\in  \tilde{\mathcal{C}}_{ad} \subset [L^{\infty}(\om)]^{3}$;
 \item Choose an initial parameter $\lambda_{0}\ge 1$.
\end{itemize}
\item{Projected L-M iteration:}
\begin{itemize}
\item[(1)] Do the projected L-M iteration step \eqref{eq:data2}--\eqref{eq:proj_LM};
\item[(2)] If stopping criteria are not fulfilled, set $n\leftarrow n+1$, update  $\lambda_{n}=\max \{\lambda_{0}\beta^{n},\mu_{n}\}$, where $\beta\in (0,1)$, and $\mu_{n}\geq 0$ and go back to (1); otherwise, give the output.
\end{itemize} 
\item{Output:} The estimated parameter map $\f x_{n_{e}}=(\rho_{n_{e}},\theta_{n_{e}})$, for some final iteration index $n_{e}$.
\end{itemize}
\caption{\newline\textbf{ Projected L-M iteration with MRF/BLIP-based initialization}}
\label{our_algorithm}
\end{algorithm}
There $(\mu_{n})_{n\in\mathbb{N}}$ is a sequence of  parameters that depend on the noise level in the data, and $\lambda_0$ depends on the sub-sampling rate.
In our numerical examples below, we set $\lambda_0=s^2$, where $1/s$ is the undersampling rate of the data.
A typical choice for $\mu_n$ is $ \mu_n= \epsilon \norm{Q(\f x_n)-D^{\delta}}_{\left([L^2(K)]^{2}\right)^{L}}$ where $\epsilon \in (0,1) $.

\subsection{Why more data frames can help}
\label{subsec:Chebyshev}
In the original MRF approach, in order to handle the problem of noisy data, the use of a large number $L$ of consecutive pulse sequences and acquisitions is proposed. Conceptionally, this technique should average out noise and thus support better reconstructions. We borrow this idea here and justify it theoretically in what follows. 
In this part we consider problems after discretization, that is, in finite dimensional spaces.

For this purpose, we first recall the so called Chebyshev's inequality for vector-valued random variables (see e.g. \cite{Fer82,Pap91}). In its formulation, $\mathbb{P}(\cdot)$ stands for the probability of an event and $\|\cdot\|_{\R^{p}}$ denotes the Euclidean norm in $\R^{p}$.
\begin{lemma}[Chebyshev's inequality]
\label{lem:Chebyshev}
Let $\phi=(\phi_1,\phi_2,\ldots,\phi_p)$ be a vector-valued  random variable, for some $p\in\mathbb{N}$, with expected value  and variance $E(\phi)=\chi=(\chi_1,\chi_2,\ldots,\chi_p)$, $V(\phi)=\Sigma^2=(\sigma_1^2,\sigma_2^2,\ldots,\sigma_p^2)$, respectively.
Then,  for every $\epsilon>0$, we have 
\begin{equation}
\label{eq:Chebyshev}
\mathbb{P}(\|\phi -\chi\|_{\R^{p}} > \epsilon ) \leq   \frac{\|\Sigma^2\|_{\R^{p}}}{\epsilon^2}.
\end{equation}
\end{lemma}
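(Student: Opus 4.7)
The plan is to deduce this vector-valued Chebyshev bound from the classical (scalar) Markov inequality applied to the non-negative random variable $\|\phi-\chi\|_{\R^p}^{2}$. Concretely, I would first observe the elementary equivalence of events
\begin{equation*}
\{\|\phi-\chi\|_{\R^p} > \epsilon\} = \{\|\phi-\chi\|_{\R^p}^{2} > \epsilon^{2}\},
\end{equation*}
so that it suffices to control the probability of the right-hand event.

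Next, I would apply Markov's inequality to $X := \|\phi-\chi\|_{\R^p}^{2}\geq 0$ with threshold $\epsilon^{2}$, obtaining
\begin{equation*}
\mathbb{P}\bigl(\|\phi-\chi\|_{\R^p}^{2} > \epsilon^{2}\bigr) \leq \frac{E\!\left[\|\phi-\chi\|_{\R^p}^{2}\right]}{\epsilon^{2}}.
\end{equation*}
Then I would compute the numerator by expanding the Euclidean norm component-wise and using linearity of expectation:
\begin{equation*}
E\!\left[\|\phi-\chi\|_{\R^p}^{2}\right] = E\!\left[\sum_{i=1}^{p}(\phi_{i}-\chi_{i})^{2}\right] = \sum_{i=1}^{p} E\!\left[(\phi_{i}-\chi_{i})^{2}\right] = \sum_{i=1}^{p}\sigma_{i}^{2}.
\end{equation*}

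Finally, I would identify the resulting sum $\sum_{i=1}^{p}\sigma_{i}^{2}$ with the quantity $\|\Sigma^{2}\|_{\R^p}$ appearing in the statement, under the convention that the norm applied to the variance vector $\Sigma^{2}=(\sigma_{1}^{2},\ldots,\sigma_{p}^{2})$ is the (non-negative) sum of its entries. Combining the two displayed estimates then yields the asserted bound \eqref{eq:Chebyshev}.

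There is no real mathematical obstacle here: the argument is a one-line reduction to the scalar Markov inequality once the right non-negative functional of $\phi$ is chosen. The only point that needs care is the interpretation of the norm on the right-hand side (so that $\|\Sigma^{2}\|_{\R^p}$ is indeed $\sum_{i}\sigma_{i}^{2}$ rather than, say, the Euclidean norm of the vector of squared variances); this should be stated explicitly before concluding.
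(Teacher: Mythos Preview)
The paper does not supply its own proof of this lemma; it is simply recalled with references. Your argument via Markov's inequality applied to $\|\phi-\chi\|_{\R^p}^{2}$ is the standard one and is correct.

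Your caution about the meaning of $\|\Sigma^{2}\|_{\R^{p}}$ is well placed, and in fact the issue is sharper than you indicate. The paper explicitly declares $\|\cdot\|_{\R^{p}}$ to be the Euclidean norm, but with that reading the inequality is \emph{false}: take $\phi_{1},\phi_{2}$ i.i.d.\ with $\mathbb{P}(\phi_{i}=\pm a)=\tfrac12$, so that $\Sigma^{2}=(a^{2},a^{2})$, $\|\Sigma^{2}\|_{2}=\sqrt{2}\,a^{2}$, and $\|\phi\|=\sqrt{2}\,a$ almost surely; choosing $\epsilon$ just below $\sqrt{2}\,a$ gives probability $1$ on the left but $\approx 1/\sqrt{2}$ on the right. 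What Markov actually delivers is the bound $\sum_{i}\sigma_{i}^{2}/\epsilon^{2}$, i.e., the $\ell^{1}$-norm of $\Sigma^{2}$, exactly as you compute. This is also all that the paper needs: in the proof of Theorem~\ref{thm:gaussian_data} the quantity $\|\mathrm{diag}((A^{\top}A)^{-1})\|_{\R^{p}}$ is immediately bounded above by the trace, so only the $\ell^{1}$ version is ever used. You should therefore state the bound with $\sum_{i}\sigma_{i}^{2}$ (or note that $\|\cdot\|_{\R^{p}}$ here must mean the $\ell^{1}$-norm) rather than leave it as a convention to be checked.
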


The following, main theorem of this section, states that if a family of $L$ linear systems has a common solution and the right hand sides are perturbed by noise, then by solving a least-squares problem one can get an approximation of the common solution, with a certain probability that gets improved as the number $L$ increases. Later we shall see how this applies to our proposed algorithm to qMRI.

\begin{theorem}
\label{thm:gaussian_data}
Let $A_\ell \zeta=b_\ell$, $\ell=1,\ldots, L$, be a family of $L$  linear systems of equations, where $\set{b_\ell}_{\ell=1}^{L}$, with $b_\ell\in\mathbb{R}^p$ for every $\ell$, and $\set{A_\ell}_{\ell=1}^{L}$, with $A_\ell\in\mathbb{R}^{d\times p}$, $p\le d$, and $\mathrm{rank}(A_{\ell})=p$ for every $\ell$, are given sequences of data and system matrices, respectively. 
Assume also that the singular values of all $A_\ell$ have a uniform lower and upper bound  $\sqrt{c}$ and $\sqrt{C}$, respectively, which are both independent of $L$. Further suppose that this family of equations has a common solution $\zeta^{\ast}\in \R^{p}$.
If $\tilde{b}_\ell=b_\ell+ \delta_\ell$, where $\{\delta_\ell\}_{\ell=1}^{L}$ are independently identically distributed (i.i.d.) random variables with expected value $(0,\ldots,0)\in \R^{p}$, and variance $(\sigma^2,\ldots,\sigma^2)\in \R^{p}$,
then the least-squares solution 
\begin{equation}
\label{eq:least_square_error}
\zeta_{ls}=\underset{\zeta\in \R^{p}}{\operatorname{argmin}}\; \norm{A \zeta-\tilde{b}}_{\R^{Ld}}^2,
\end{equation}
where 
\[A= (A_1,\, A_2,\cdots,\,A_L)^\top \quad  \text{and} \quad \tilde{b}=(\tilde{b}_1,\,\tilde{b}_2,\cdots,\,\tilde{b}_L)^\top \]
approximates the solution $\zeta^*$ with the following probability estimate 
\begin{equation}
\label{eq:error_probability}
\mathbb{P}(\norm{\zeta_{ls}-\zeta^*}_{\R^{p}}> \epsilon)< \frac{\sigma^2}{ \epsilon^2} \mathcal{O}\left( \frac{p}{L} \right),\;\quad \text{  for  every }\; \epsilon> 0.
\end{equation}
\end{theorem}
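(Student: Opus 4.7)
The strategy is to obtain an explicit expression for $\zeta_{ls}-\zeta^*$ in terms of the stacked noise $\delta=(\delta_1,\ldots,\delta_L)^\top$, bound its expected squared Euclidean norm by a quantity of order $\sigma^2 p/L$, and then invoke Chebyshev's inequality (Lemma \ref{lem:Chebyshev}) applied to the centered vector $\zeta_{ls}-\zeta^*$. Stacking, the noise-free systems read $A\zeta^*=b$, while the least-squares estimator is characterized by the normal equations $(A^\top A)\zeta_{ls}=A^\top\tilde b$. The uniform singular-value bound $\sigma_{\min}(A_\ell)\ge\sqrt{c}$, combined with $A^\top A=\sum_{\ell=1}^L A_\ell^\top A_\ell$, yields the positive-definite estimate $A^\top A \succeq Lc\,I_p$, so that $(A^\top A)^{-1}$ exists and the subtraction gives the key identity
\[
\zeta_{ls}-\zeta^* \;=\; (A^\top A)^{-1}A^\top\delta.
\]

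Second, interpreting the variance specification as $\mathrm{Cov}(\delta_\ell)=\sigma^2 I$ with the blocks $\delta_\ell$ mutually independent yields $\mathrm{Cov}(\delta)=\sigma^2 I$ on the stacked space. Linear propagation of covariances then gives $\mathrm{Cov}(\zeta_{ls})=\sigma^2(A^\top A)^{-1}$, and since $E[\zeta_{ls}]=\zeta^*$ the second moment satisfies
\[
E\!\left[\|\zeta_{ls}-\zeta^*\|_{\R^p}^2\right] \;=\; \sigma^2\,\mathrm{tr}\!\left((A^\top A)^{-1}\right) \;\le\; \frac{\sigma^2\,p}{c\,L},
\]
where the last inequality combines $A^\top A\succeq Lc\,I_p$ with the standard trace-inverse bound $\mathrm{tr}(M^{-1})\le p/\lambda_{\min}(M)$ for a symmetric positive definite matrix $M\in\R^{p\times p}$. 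Applying Chebyshev's inequality in the Markov form $\mathbb{P}(\|Z\|>\epsilon)\le E[\|Z\|^2]/\epsilon^2$ with $Z=\zeta_{ls}-\zeta^*$ (which is precisely Lemma \ref{lem:Chebyshev} specialized to a zero-mean vector) then delivers
\[
\mathbb{P}\!\left(\|\zeta_{ls}-\zeta^*\|_{\R^p} > \epsilon\right) \;\le\; \frac{\sigma^2 p}{c\,L\,\epsilon^2},
\]
which is exactly the asserted bound with the implicit $\mathcal{O}(p/L)$-constant equal to $1/c$.

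The only nontrivial point, and the one I expect to require the most careful wording, is the covariance calculation: one must verify that $\mathrm{Cov}(\zeta_{ls})=(A^\top A)^{-1}A^\top\,\mathrm{Cov}(\delta)\,A(A^\top A)^{-1}$ collapses to $\sigma^2(A^\top A)^{-1}$, and this hinges on reading the stated noise specification as a scalar multiple of the identity covariance (otherwise the cross terms do not vanish and only a weaker bound involving $\sigma_{\max}(\mathrm{Cov}(\delta))$ survives). All other ingredients---existence of $(A^\top A)^{-1}$ from the rank and singular-value hypotheses, the semidefinite bound on a sum of positive semidefinite matrices, and the trace-inverse inequality---are standard linear algebra that do not require novel arguments, so the crux of the proof really lies in the noise-model interpretation and in the $1/L$-gain produced by the spectral lower bound on $A^\top A$.
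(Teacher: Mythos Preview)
Your proposal is correct and follows essentially the same approach as the paper: express $\zeta_{ls}-\zeta^*=(A^\top A)^{-1}A^\top\delta$, compute its covariance as $\sigma^2(A^\top A)^{-1}$, bound $\operatorname{tr}((A^\top A)^{-1})$ by $p/(cL)$ via the uniform singular-value lower bound, and apply Chebyshev. The paper's only cosmetic difference is that it invokes Lemma~\ref{lem:Chebyshev} with the sum of variances $\|\Sigma^2\|_{\R^p}$ and then bounds this by the trace, whereas you use $E[\|Z\|^2]=\operatorname{tr}(\mathrm{Cov}(Z))$ directly; your route is a touch more streamlined but equivalent. Your explicit remark that the variance calculation requires reading the noise model as $\mathrm{Cov}(\delta_\ell)=\sigma^2 I$ (uncorrelated components) is a point the paper uses implicitly without comment.
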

\begin{proof}
From \eqref{eq:least_square_error}, we get $\zeta_{ls}=(A^\top A)^{-1}A^\top\tilde{b}$, which is also a random variable.
Since $A$ is not random, we can compute the expected value of $\zeta_{ls}$ as follows:
\[E(\zeta_{ls}) = E((A^\top A)^{-1}A^\top\tilde{b})=(A^\top A)^{-1}A^\top E(\tilde{b})= (A^\top A)^{-1}A^\top b =\zeta^*.\]
Therefore $E(\zeta_{ls}- \zeta^*)=0$. Similarly, for the variance (diagonal of the covariance matrix) we have
\[V( \zeta_{ls}-\zeta^* ) =\sigma^2 \text{diag}\left((A^\top A)^{-1}\right),\]
where ``diag'' denotes the diagonal of a matrix.
Denoting by $\operatorname{Tr}$ the trace operator, i.e.,  the summation of the diagonal values 
and using Lemma \ref{lem:Chebyshev}, we get that for every $ \epsilon>0$ 
\begin{equation}
\mathbb{P}(\norm{ \zeta_{ls}-\zeta^*}_{\R^{p}} >  \epsilon )
< \frac{\sigma^2 \| \text{diag}\left((A^\top A)^{-1}\right)\|_{\R^{p}} }{\epsilon^{2}}
\le \frac{\sigma^2  \operatorname{Tr}\left((A^\top A)^{-1}\right) }{\epsilon^{2}}.\label{less_than_trace}
\end{equation}
Here we have used the fact that the matrix $(A^\top A)^{-1}$ is positive definite and hence it has strictly positive diagonal elements, together with the fact that the $\ell_1$ norm in $\R^{p}$ is larger than the Euclidean one.

From the form of $A$ we have
$A^\top A= \sum_{\ell=1}^L A_\ell^\top A_\ell$ with trace 
\[\operatorname{Tr}(A^\top A)=\sum_{\ell=1}^L \operatorname{Tr}(A_\ell^\top A_{\ell}).\] 
Since every $A_\ell^\top A_\ell$ is positive definite, so is $A^\top A$.
Let $\set{S_j}_{j=1}^{p}$ be the eigenvalues of $A^\top A$ allowing for the decomposition
\begin{equation}\label{decomp}
A^\top A=USU^{-1}  \quad \text{ and } \quad (A^\top A)^{-1}= US^{-1}U^{-1}, 
\end{equation}
where $S$ is the diagonal matrix with entries $\set{S_j}_{j=1}^{p}$, and $U$ is a unitary matrix.
Then, for the traces we have 
\[\operatorname{Tr}(A^\top A)=\sum_{j=1}^p S_j\quad \text{ and } \quad \operatorname{Tr}\left((A^\top A)^{-1}\right)=\sum_{j=1}^p \frac{1}{S_j}.\] 
Due to the uniform lower and upper bounds on the singular values of $\set{A_\ell}_{\ell=1}^L$, we get a corresponding uniform bound on the eigenvalues of the matrices $\set{A_\ell^\top A_\ell}_{\ell=1}^L$, i.e.,
\[ c L\leq S_j \leq C L, \quad \text{ for all }\; j=1,2,\ldots,p.\]
Consequently, we have  
\[\frac{1}{C L}\leq \frac{1}{S_j} \leq \frac{1}{cL} \quad \Longrightarrow\quad \frac{1}{S_j}=\mathcal{O}\left(\frac{1}{L}\right) ,\;\text{ for }\; j=1,2,\ldots,p.\]
From this we infer the following estimate
\[ \operatorname{Tr}\left((A^\top A)^{-1}\right)=\sum_{j=1}^p \frac{1}{S_j}=\mathcal{O}\left(\frac{p}{L}\right),\]
and combined with \eqref{less_than_trace} it proves the assertion
\[ \mathbb{P}(\norm{\zeta_{ls}-\zeta^*}_{\R^{p}} >  \epsilon ) =\frac{\sigma^2}{ \epsilon^2}\mathcal{O}\left(\frac{p}{L}\right). \]
\end{proof}

Theorem \ref{thm:gaussian_data} relates to our qMRI algorithm in several ways:

 (i)  Observe that regarding the setting of qMRI problems, the noise in the data obtained after each pulse sequence can be considered as realisations of i.i.d. random variables.
 
(ii) In Newton-type methods, if there is no sub-sampling for the qMRI-operator, then $Q^\prime$ is non-degenerate on the effective domain $\Omega$, i.e., on the part of the slices where the biological tissue is imaged. In this case, we may consider $A_{\ell}=(Q^{(\ell)})^\prime $, and $b_{\ell}= (D_k^{(\ell)})^\delta$, with both quantities
satisfying the assumptions of Theorem \ref{thm:gaussian_data} given that the data contains Gaussian noise.
This indicates that the result of Theorem \ref{thm:gaussian_data} can be applied to every Newton-type step for a given $\epsilon>0$, and an initial value $\norm{\f {x}_0 - \f {x}^*} \leq \epsilon $. 
Since we have restricted to a small neighbourhood of the exact solution $\f {x}^*$, we can take roughly the common solution $\zeta^{\ast}$ corresponding to \eqref{eq:least_square_error} of Theorem \ref{thm:gaussian_data} as the exact solution of the least-squares problem \eqref{eq:Gauss_Newton}. 

(iii) In the case of the Levenberg-Marquardt method, with the sub-sampling operators $P^{(\ell)}$, the results of Theorem \ref{thm:gaussian_data} can still  be applied as the involved matrices become $A_{\ell}= \left(((Q^{(\ell)})^\prime)^\top, \sqrt{\lambda_n }\text{Id} \right)^\top  $, and $b_{\ell}=\left(((\tilde{D}_n^{(\ell)})^\delta)^\top, 0\right)^\top$. Note that in this case $\zeta$ corresponds to $\f h$.
Further the matrices $A_{\ell}$ will always be of full rank with uniformly bounded singular values, whenever we let the sequence $(\lambda_n)_{n\in\mathbb{N}}$ be uniformly bounded away from zero. Such a uniform lower bound is indeed usually in place at the presence of noise. In such a case, we can treat $\zeta^{\ast}\equiv 0$ as the common solution of \eqref{eq:L-M} .

\section{Numerical results}
\label{sec:Numerics}
Now we report on numerical results obtained by our Algorithm \ref{our_algorithm} when applied to synthetic data. Our setting also allows for an extensive quantitative comparison with Algorithm \ref{alg:BLIP} (BLIP), which was shown in \cite{DavPuyVanWia14} to be superior to the original MRF.

\subsection{Generating test data}
Our tests are based on synthetic data from an anatomical brain phantom, publicly available from the Brain Web Simulated Brain Database \cite{Brainweb,Col_etal_98}. We use 
a $217\times 181$ slice completed by zero fill-in order to generate a $256\times 256$-pixel image. The selected ranges for $\theta=(T_1,T_2)^\top$ and  $\rho$ reflect natural values encountered in the human body \cite{DavPuyVanWia14},
with $T_1$ ranging from $530ms$--$5012ms$, $T_2$ from $41ms$--$512ms$, and  $\rho$ between $80$--$100$. As pixel units in practical images very likely contain multiple tissue types rather than only a pure one in a single volume of the observed pixels, we interpolate the values of each parameter $T_1$, $T_2$ and $\rho$ of the $256\times 256$ phantom, respectively, by averaging the values of every four neighboured pixels with non-zero parameter values.
This average process shrinks the $256\times 256$ image to a $128\times128$ image.
In Figure \ref{fig:real_solutions}, we display the interpolated parameters of $T_1$, $T_2$ and $\rho$ as coloured images. These serve as the ground truth for our numerical tests.
\begin{figure}[!ht]
\centering
\includegraphics[width=0.32\textwidth, trim={0 0 12.95cm 0},clip]{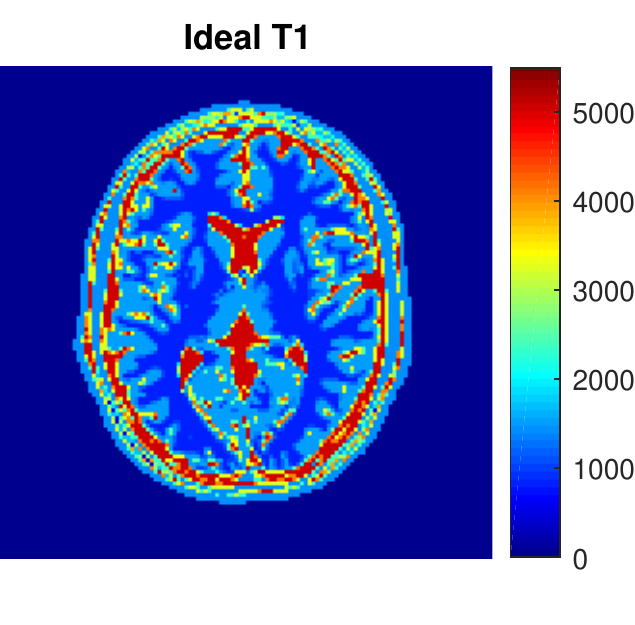}
\includegraphics[width=0.32\textwidth, trim={6.475cm 0 6.475cm 0},clip]{128real_solution-eps-converted-to.pdf}
\includegraphics[width=0.32\textwidth, trim={12.95cm 0 0 0},clip]{128real_solution-eps-converted-to.pdf}
\caption{The interpolated parameters serve as the ground truth for our algorithm. From left to right: $T_1$, $T_2$ and $\rho$.}
\label{fig:real_solutions}
\end{figure}

The IR-bSSFP pulse sequence scheme introduced in Section \ref{subsec:Bloch_dynamic} is applied to generate MRI data. It is based on constant flip angles $\alpha$ and repetition time $TR$ sequences of length $L$. The data $D$ are generated by using the prescribed parameters $T_1$, $T_2$  and $\rho$ with the pulse sequences characterized by $\alpha$ and $TR$. With this setting, we first simulate the magnetization, and then use FFT to generate the Fourier space data from it. Sub-sampling is implemented by using the scheme described in the next section. We also note that for the generation of the magnetization, we rely on \eqref{eq:Bloch_discrete} and take advantage of the MATLAB code provided in \cite{Ma_etal13}.
For simplicity, we set the phase shift $\phi \equiv 0$ in \eqref{eq:Bloch_discrete}.
 {The noise in the Fourier data is simulated as follows: we first add Gaussian noise of mean zero to the magnetization function $M$ over the effective domain $\Omega$, and then apply discrete Fourier transform to it. Note the Gaussian noise after the application of Fourier transform is still Gaussian.} 

\subsection{Sub-sampling patterns}\label{subsec:subsampling}
\subsubsection*{Cartesian sub-sampling}
Here we focus on Cartesian sub-sampling which is frequently used in practice; see, e.g.,  \cite{Mck93}.  This choice implies a specific form of the sub-sampling operator $P^{(\ell)}$ for $\ell=1,\ldots,L$. In the discrete setting, the full $k$-space data are given by a dense matrix of complex values or, equivalently, two real-valued dense matrices, respectively of size $N\times N$. According to our set-up above, we have $N=128$.
More specifically, we use here an $n$ multishot Echo Planar Imaging (EPI) scheme, which means that at every read-out time, $n$ rows of $k$-space are simultaneously filled. Hence, in every acquisition there will be $n<N$ rows of the matrix filled with Fourier coefficients. 
To simplify the discussion, we consider $(N\mod n)\equiv 0$, and further set $s:=N/n$, which gives a sub-sampling rate of $1/s$.
The sampling pattern $P^{(\ell)}$ is described in detail as follows:
\begin{itemize}
\item[(i)] For every $\ell$-th acquisition, define $\xi_\ell: =(\ell\mod s) $ for $\ell=1,\ldots,L$.
\item[(ii)] $P^\ell$ will include those rows of the full $k$-space matrices, indexed by numbers from the set $\iota$ with
\[\iota:= \set{i \in \set{1,\ldots,N}\;:\; (i \mod s )\equiv \xi_\ell}.\]
\end{itemize}
Thus, at every read-out time, $P^{(\ell)}$ samples $n$ rows from the full Fourier space to simulate the $n$ multishot EPI.
A simple example of such a sub-sampling pattern is shown in Figure \ref{fig:cartesian_sampling}.
\begin{figure}[!ht]
\includegraphics[width=\textwidth]{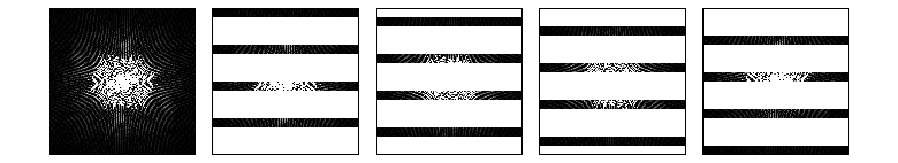}
\label{fig:cartesian_sampling}
\caption{A sub-sampling pattern example for $s=4$, $L=4$, and $N=16$. The first image depicts an example of fully sampled $k$-space data. The  second to the fifth images are example frames of the undersampled data, where the information indicated by blank rows  is not collected in that frame. The sub-sampling pattern follows the order periodically if $L>s$.}
\end{figure}

We note that this sub-sampling strategy differs from the one in \cite{DavPuyVanWia14}. There, $\xi_\ell$ is defined as a uniformly distributed random number in $\set{0,1,\ldots,s-1}$, whereas here we consider a deterministic periodical order.
After experimentation and when compared to the pseudo-random strategy of \cite{DavPuyVanWia14}, we found that the deterministic order is more stable and gives comparable or better results when the BLIP algorithm for Cartesian sub-sampled data is used.
Therefore, in our implementation of the BLIP algorithm we use the deterministic strategy as described above.

We consider different sub-sampling rates on the data using the above strategy.
By taking into account that longer processing time is needed for higher sampling rates, it follows that the flip angles and the repetition times must be increased proportionally.
Accordingly, we use the following repetition times  $TR=(TR_1,TR_2,\ldots, TR_L)$  and flip angles $\alpha=(\alpha_1,\alpha_2,\ldots, \alpha_L)$:
\begin{itemize}
\item[(a)] Fully sampled data:
Repetition time $TR_{\ell}= 40ms$ and flip angles $\alpha_\ell= \frac{40\pi}{180}$ for all $\ell=1,\ldots,L$.
\item[(b)] $1/4$ sampled data (sub-sampling rate $25\%$), e.g., a 32 multi-shot EPI:
Repetition time $TR_\ell= 20ms$ and flip angles $\alpha_\ell= \frac{20\pi}{180}$ for all $\ell=1,\ldots,L$.
\item[(c)] $1/8$ sampled data (sub-sampling rate $12.5\%$), e.g., a 16 multi-shot EPI:  a shorter repetition time $TR_\ell= 10ms$ and smaller flip angles $\alpha_\ell= \frac{10\pi}{180}$ are applied for all $\ell=1,\ldots,L$.
\end{itemize}
\subsubsection*{Radial sub-sampling}
 {We  use a similar strategy for radial sub-sampling patterns. There, we uniformly discretize the angular domain $[0,\pi)$. Each of the radial lines passing through the center point is fully sampled (full sample of radial direction).  The angular direction on the other hand, may be sub-sampled either randomly or, as this is shown in Figure \ref{fig:radial_sampling}, in a uniform fashion. 
\begin{figure}
\includegraphics[width=\textwidth]{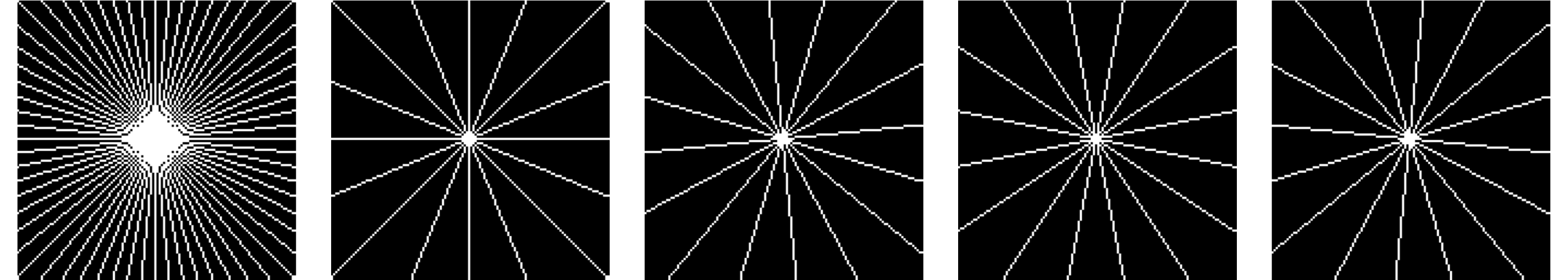}
\label{fig:radial_sampling}
\caption{A radial sub-sampling pattern example, where we have taken $p=32$, and $s=8$. Far left: The full $p$ resolution-angular radial sample obtained by rotating the sampling pattern in every acquisition step, with the    $4$ sampling  pattern frames depicted next. The $k$-space data are collected in each frame along the white strips.}
\end{figure}
As we use the  uniform sub-sampling of the angular direction in our numerical tests, we describe it in more detail: During each acquisition, $s$ angles (usually corresponding to $s$ coils) uniformly distributed over $[0,\pi)$ are selected. We thus obtain $k$-space data along the $s$ lines going through the center of the $k$-space. Then after every acquisition, we shift the angles by $\pi/p$, where $p$ is the angle-resolution number which we will consider to be $p=128$ in our examples later. This process is then periodically repeated till the end of the acquisition. 
However, we note that in contrast to Cartesian sub-sampling, in the radial case the angle-resolution $128$ can only provide a sampling rate of $ 74.02\% $ of the full $k$-space data of size $128\times128$.
Note that in our numerical experiments later, using this kind of mask, it generates $k$-space data in a squared grid where for non-sampled areas we fill with zeros.
The  Fourier transform over the radial sub-sampled space, and its inverse, can be easily implemented by using the MATLAB function ``fft2'' (``ifft2'' for the inverse). 
We mention that in practice radially sub-sampled data are often treated with non-uniform fast Fourier transform (NUFFT) algorithms. This is particularly relevant regarding the speed of the algorithm.
As far as the quality of the reconstructions is concerned, the approach we follow here is expected to be comparable to NUFFT.
}
\vspace{1em}

 {To illustrate the efficiency of the proposed method, we compare it with other methods from the literature. In particular, we choose BLIP and also the algorithm in \cite{MazWeiTalEld18} which approximates \eqref{eq:LR_BLIP}. 
In order to relax the non-convex penalty in \eqref{eq:LR_BLIP}, the authors in \cite{MazWeiTalEld18} utilized the nuclear norm, i.e. the sum of the singular values, of the discrete matrix that represents the magnetization variable.
The algorithm, named FLOR (MRF with LOw Rank), uses soft-thresholding of the singular values, and the details can be found in \cite[Algorithm 4]{MazWeiTalEld18}.
We also remark that in the literature on MRF, Cartesian sub-sampling is not used frequently since the pertinent artefacts pose extra challenges when compared to the ones due to spiral and radial sub-sampling. In fact, BLIP is one of the most successful examples for Cartesian sub-sampling among MRF techniques. According to our experiments, the FLOR algorithm seems not to work reasonably well for Cartesian sub-sampling. Therefore, in this case we do not present the results of FLOR but only compare to the BLIP algorithm.}

In order to compare our method with BLIP and FLOR algorithms, we used for the latter two a very fine dictionary where $T_1$ was discretized from $15ms$ to $5500ms$ with increments of $15ms$, and $T_2$ was discretized from $1.5ms$ to $550ms$ with increments of $1.5ms$.
This means that the discretized feasible domain $C_{ad}$ for $\theta=(T_1,T_2)$ was (in MATLAB notation)
\[C_{ad}=\left\{[15:15:5500]\times [1.5:1.5:550]\right\}.\]
In this case, the dictionary had $366\times366=133956$ entries, and it required memory for a storage matrix of dimension $133956\times L$. The deterministic sub-sampling pattern was used in all numerical examples for the BLIP algorithms, including the generation of the initial values.
For the projection onto the feasible set $\tilde{\mathcal{C}}_{ad}$ we use the following thresholds for each parameter values: (the constants $\underline{C} $  and  $\overline{C}$ here refer to  \eqref{eq:threshold})
\begin{equation*}
\begin{matrix}
    & \underline{C} & \overline{C}\\
T_1 &  0            & 5500       \\
T_2 &  0            & 550        \\
\rho&  0            & 100
\end{matrix}
\end{equation*}
Note that the  value $0$ is assigned to the marginal area in the tested images in Figure \ref{fig:real_solutions}, 
where there is no tissue information.
It is reported in  \cite{Ma_etal13} that adding random noise to the flip angles and to  repetition times may improve the final results of MRF (and BLIP). However, in our experiments we did not find significant differences.
Therefore, we do not add noise to the angles and repetition times in our numerical tests.
Further, the linear systems in the L-M iterations were solved by employing MATLAB's backslash command. For our test runs, we used a CPU with an Intel Core i5-7500, 3.40GHz, 2 cores, and RAM of 8GB DDR4, 2400 MHz, as well as
MATLAB of version 2018a under the operating system openSUSE 42.3.

\subsection{Experiments on Cartesian sub-sampled data}
\subsubsection{Undersampled data with no additive noise}
The first set of examples addresses noiseless undersampled data (Cartesian sub-sampling at rate $1/8$), and totally $L=80$ data frames.
In these tests, we used a coarse dictionary for initializing Algorithm \ref{our_algorithm}. Here $T_1$ was discretized from $200ms$ to $5500ms$ with increments of $200ms$, and $T_2$ was discretized from $20ms$ to $550ms$ with increments of $20ms$. 
Note we not only compare our results to the solutions of the BLIP algorithm, but we also plot the initial guesses produced by BLIP.
Concerning BLIP, following the findings in \cite{DavPuyVanWia14} we applied $20$ steps of a Landweber iteration. On the other hand, our method was stopped after $25$ Levenberg-Marquardt steps as then no significant change in the iterates was observed.
The regularization parameters had the following values: $\mu=\mu_{n}=0$, for every $n\in \mathbb{N}$, $\lambda_0=s^2$, and $\beta=0.01$.

The reconstructed parameter maps are presented  in Figure \ref{fig:undersampled_solutions}. In the first row we depict the parameter maps $T_{1}$, $T_{2}$, $\rho$ of the BLIP algorithm, computed with the coarse dictionary. These quantities were subsequently used for the initialization of our new algorithm. In the second row, the corresponding results for the fine dictionary are shown. These are the ones that should be compared with the images of the third row, which are the results of our algorithm.
In order to make the differences clearer, we also provide the corresponding error maps in Figure \ref{fig:undersampled_error}. In fact, we show the pointwise error maps 
$\abs{\theta_{computed}-\theta_{gt}}$,
where $\theta_{gt}$ are the ground-truth parameter maps shown in Figure \ref{fig:real_solutions}, and draw the reader to observing the scale of error as depicted in the vertical bar.
We observe that the accuracy of the estimated parameters, especially for $T_{1}$, is much higher in our method when compared to BLIP. Note that the error in BLIP is actually larger than the dictionary mesh size, which indicates that this is not a matter of the fineness of dictionary, but it could also be due to the projection onto a non-nonvex set as discussed above.

The rate of convergence of the proposed algorithm turns out to be linear for this example; see Figure \ref{fig:undersampled_convergence}. The figure depicts the ratio $\frac{\norm{x_{n+1}-x_n}_{2}}{\norm{x_n-x_{n-1}}_{2}}$ versus the number of iterations.
Note that $x$ stands here either for $T_1$, $T_2$, or $\rho$. 
\begin{figure}[!ht]
\centering
\includegraphics[width=0.32\textwidth, trim={0 0 12.95cm 0},clip]{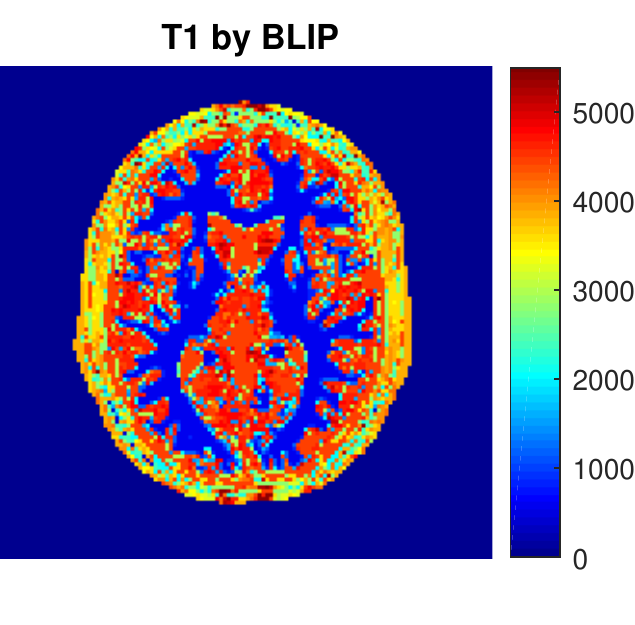}
\includegraphics[width=0.32\textwidth, trim={6.475cm 0 6.475cm 0},clip]{8th_sampled_initial_solution-eps-converted-to.pdf}
\includegraphics[width=0.32\textwidth, trim={12.95cm 0 0 0},clip]{8th_sampled_initial_solution-eps-converted-to.pdf}\\
\includegraphics[width=0.32\textwidth, trim={0 0 12.95cm 0},clip]{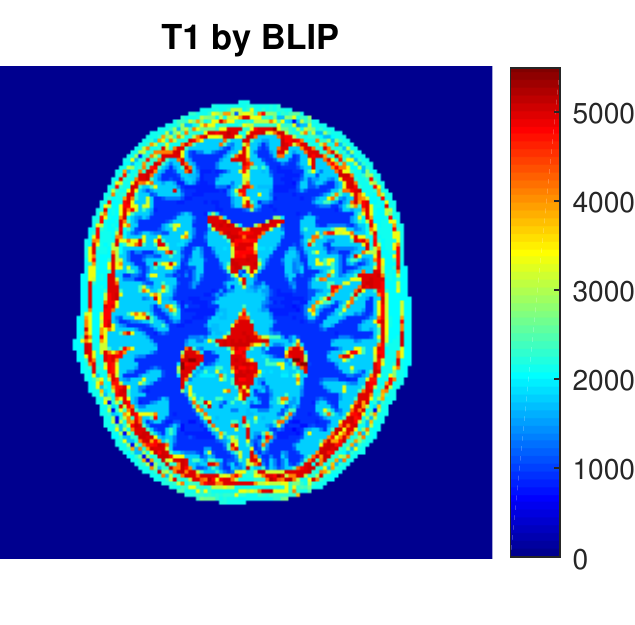}
\includegraphics[width=0.32\textwidth, trim={6.475cm 0 6.475cm 0},clip]{8th_sampled_BLIP_solution-eps-converted-to.pdf}
\includegraphics[width=0.32\textwidth, trim={12.95cm 0 0 0},clip]{8th_sampled_BLIP_solution-eps-converted-to.pdf}\\
\hspace{0.61pt}
\includegraphics[width=0.32\textwidth, trim={0 0 12.95cm 0},clip]{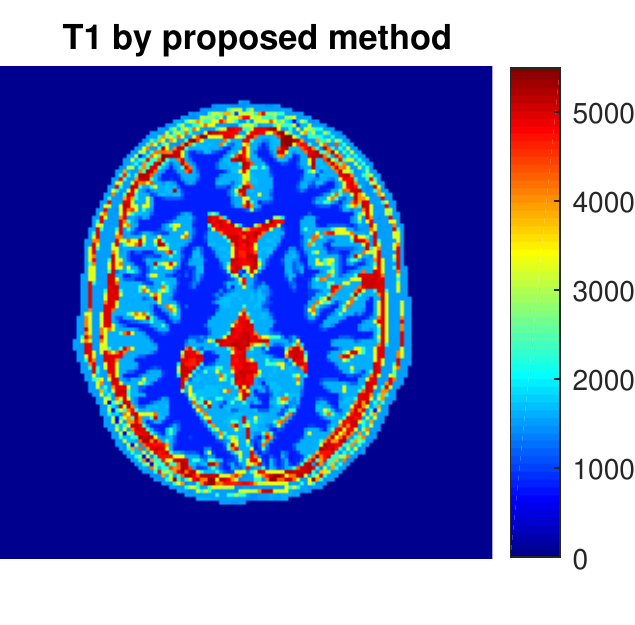}
\includegraphics[width=0.32\textwidth, trim={6.475cm 0 6.475cm 0},clip]{8th_sampled_pLM_solution-eps-converted-to.pdf}
\includegraphics[width=0.32\textwidth, trim={12.95cm 0 0 0},clip]{8th_sampled_pLM_solution-eps-converted-to.pdf}
\caption{Experiment with noiseless undersampled data. First row: Initialization of our algorithm, computed by BLIP with a coarse dictionary. Middle row: Result by BLIP with fine dictionary. Last row: Solution by proposed algorithm.}
\label{fig:undersampled_solutions}
\end{figure}
\begin{figure}[!ht]
\centering
\includegraphics[width=0.32\textwidth, trim={0 0 12.95cm 0},clip]{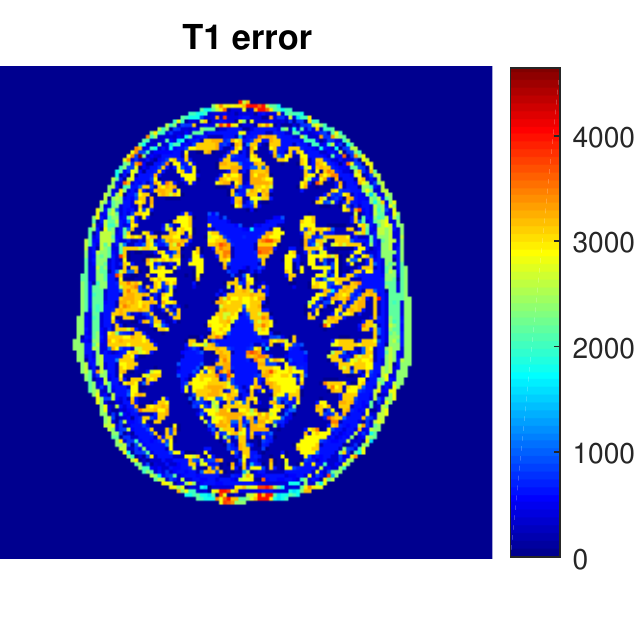}
\includegraphics[width=0.32\textwidth, trim={6.475cm 0 6.475cm 0},clip]{8th_sampled_initial_error-eps-converted-to.pdf}
\includegraphics[width=0.32\textwidth, trim={12.95cm 0 0 0},clip]{8th_sampled_initial_error-eps-converted-to.pdf}\\
\includegraphics[width=0.32\textwidth, trim={0 0 12.95cm 0},clip]{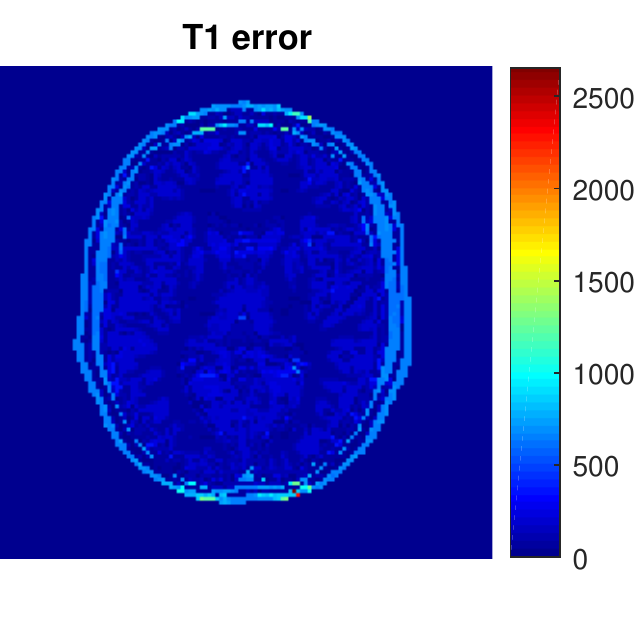}
\includegraphics[width=0.32\textwidth, trim={6.475cm 0 6.475cm 0},clip]{8th_sampled_BLIP_error-eps-converted-to.pdf}
\includegraphics[width=0.32\textwidth, trim={12.95cm 0 0 0},clip]{8th_sampled_BLIP_error-eps-converted-to.pdf}\\
\hspace{0.61pt}
\includegraphics[width=0.32\textwidth, trim={0 0 12.95cm 0},clip]{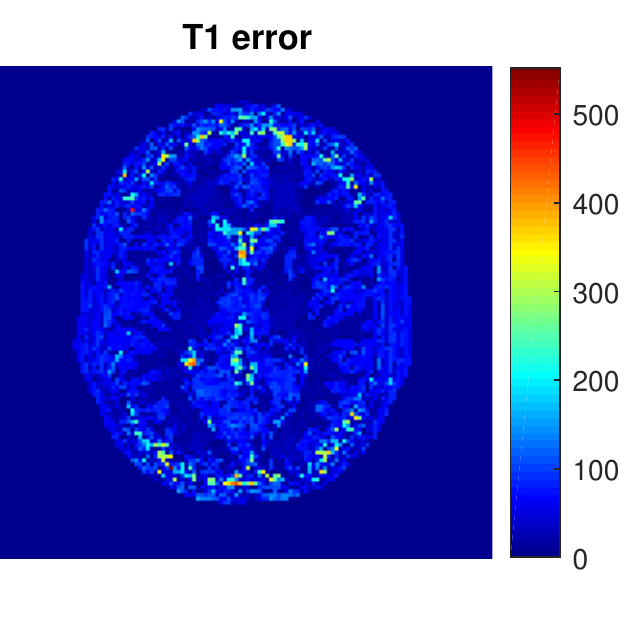}
\includegraphics[width=0.32\textwidth, trim={6.475cm 0 6.475cm 0},clip]{8th_sampled_pLM_error-eps-converted-to.pdf}
\includegraphics[width=0.32\textwidth, trim={12.95cm 0 0 0},clip]{8th_sampled_pLM_error-eps-converted-to.pdf}
\caption{Experiment with noiseless undersampled data. Pointwise distance of the solutions of  Figure \ref{fig:undersampled_solutions} to the corresponding ground truths of Figure \ref{fig:real_solutions}. First row: Initial error of BLIP with a coarse dictionary. Middle row: error of BLIP with fine dictionary. Last row: Error of the proposed algorithm.}
\label{fig:undersampled_error}
\end{figure}

\begin{figure}[!ht]
\centering
\includegraphics[width=\textwidth]{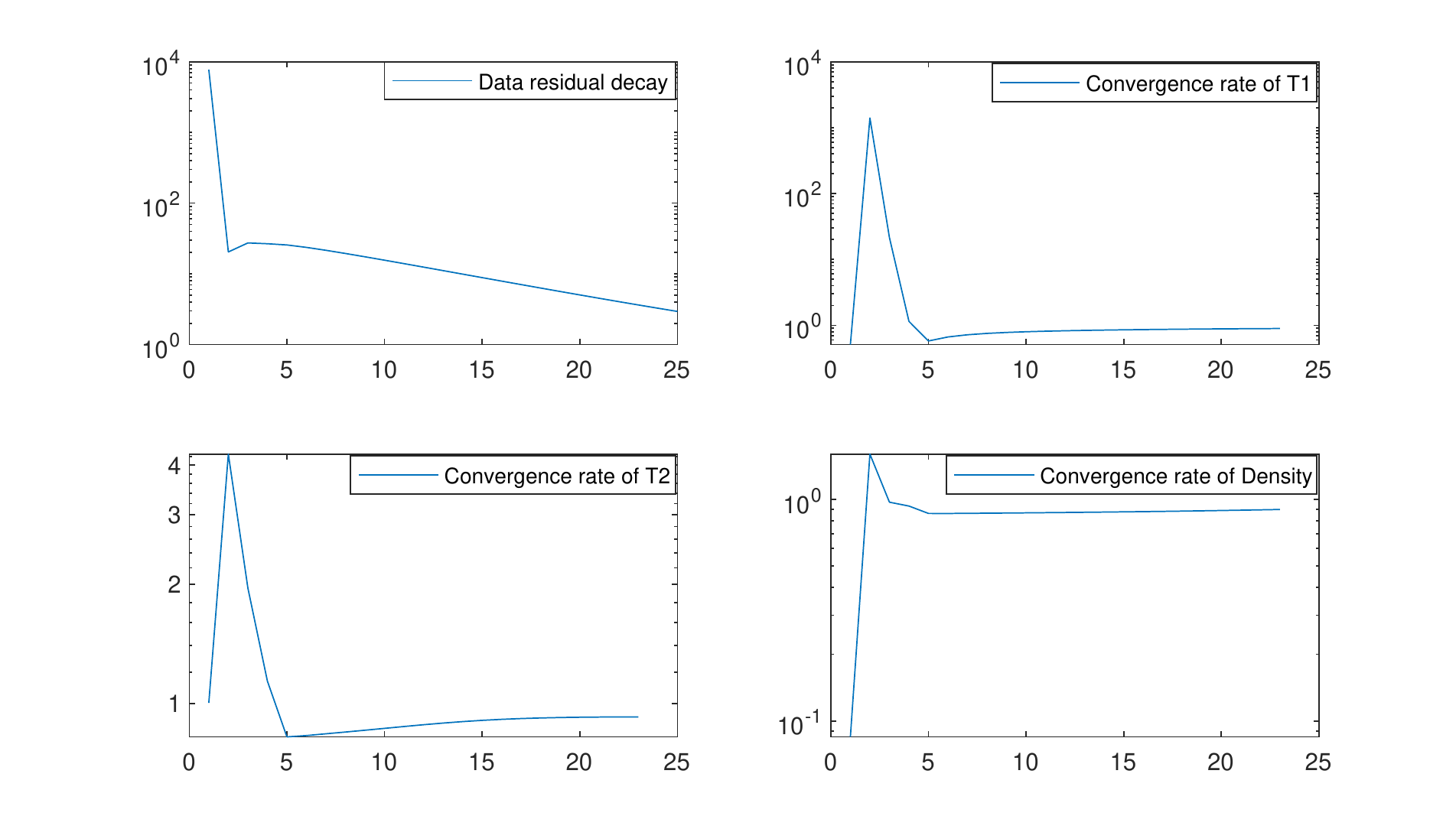}
\caption{Experiment with undersampled data. From left to right and from above to bottom: Convergence of the data residual $\|Q(\f x_{n})-D\|_{2}$,
convergence rates via plots of the iterate ratios $\frac{\norm{x_{n+1}-x_{n}}_{2}}{\norm{x_{n}-x_{n-1}}_{2}}$ for $x= T_1, T_2, \rho$, respectively.}
\label{fig:undersampled_convergence}
\end{figure}

\subsubsection{Undersampled data with additive noise}
\label{subsubsec:subsample_noise}

Now we present results for undersampled noisy data with a sub-sampling rate of 1/4 and additive Gaussian white noise of variance $\sigma^{2}=0.8$ and mean $0$. The total signal to noise ratio of the Fourier data is $SNR=35$. As before, we use here $L=80$ data frames. 

The coarse dictionary employed in order to generate the initial value $\f x_0$ used
$T_1$ discretized from $400ms$ to $5500ms$ with increments of $400ms$, and $T_2$ from $40ms$ to $550ms$ with increments of $40ms$. 
This resulted in a dictionary with $169$ entries only, and needed a complex-valued matrix of dimension $169\times L$ for its representation.
As in the previous example, the refined dictionary had a dimension $133956\times L$.
Again, we used $20$ Landweber iterations for BLIP, and  $25$ iterations for our L-M algorithm.
The regularization parameters were chosen as $\mu_{n}=10^{-8}\norm{Q\f x_n-D^\delta}_{2}$ for every $n\in\mathbb{N}$, $\lambda_0=s^2$, and $\beta=0.01$.
Note that because of noise, here we used a fixed $\mu$ strictly larger than zero.

We depict the results in Figure \ref{fig:noise_undersampled_solutions} and the corresponding pointwise errors in Figure \ref{fig:noise_undersampled_error}, using the same row system as in Figures \ref{fig:undersampled_solutions} and \ref{fig:undersampled_error}, respectively. 
The result of the proposed algorithm again outperforms the refined BLIP algorithm, especially in the reconstruction of the density map, but not as significantly as in the noiseless case. In addition, our method consumes much less memory and requires  much less CPU-time; see Table \ref{tab:Q_result}.

In Figure \ref{fig:noise_undersampled_convergence}, the residual ratio plots again show linear rates of convergence.

\begin{figure}[!ht]
\centering
\includegraphics[width=0.32\textwidth, trim={0 0 12.95cm 0},clip]{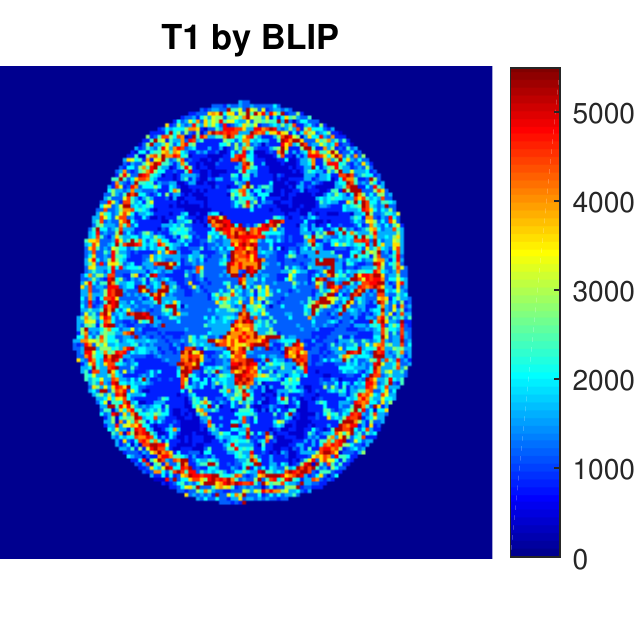}
\includegraphics[width=0.32\textwidth, trim={6.475cm 0 6.475cm 0},clip]{n_4th_sampled_initial_solution-eps-converted-to.pdf}
\includegraphics[width=0.32\textwidth, trim={12.95cm 0 0 0},clip]{n_4th_sampled_initial_solution-eps-converted-to.pdf}\\
\includegraphics[width=0.32\textwidth, trim={0 0 12.95cm 0},clip]{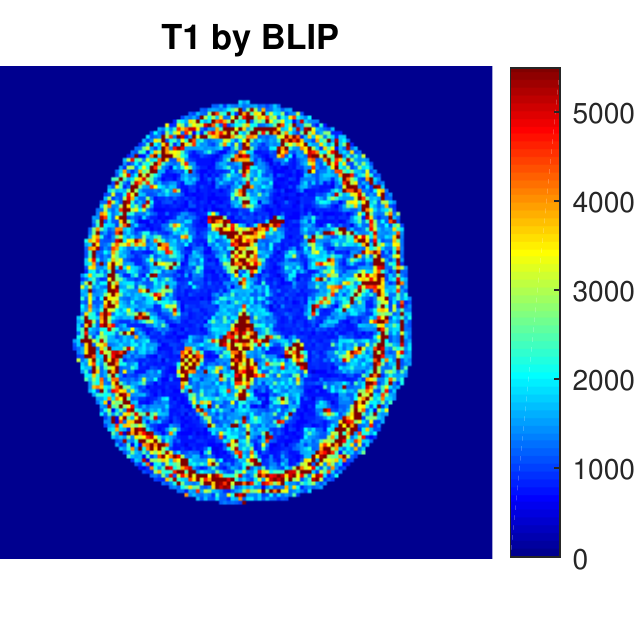}
\includegraphics[width=0.32\textwidth, trim={6.475cm 0 6.475cm 0},clip]{n_4th_sampled_BLIP_solution-eps-converted-to.pdf}
\includegraphics[width=0.32\textwidth, trim={12.95cm 0 0 0},clip]{n_4th_sampled_BLIP_solution-eps-converted-to.pdf}\\
\hspace{0.61pt}
\includegraphics[width=0.32\textwidth, trim={0 0 12.95cm 0},clip]{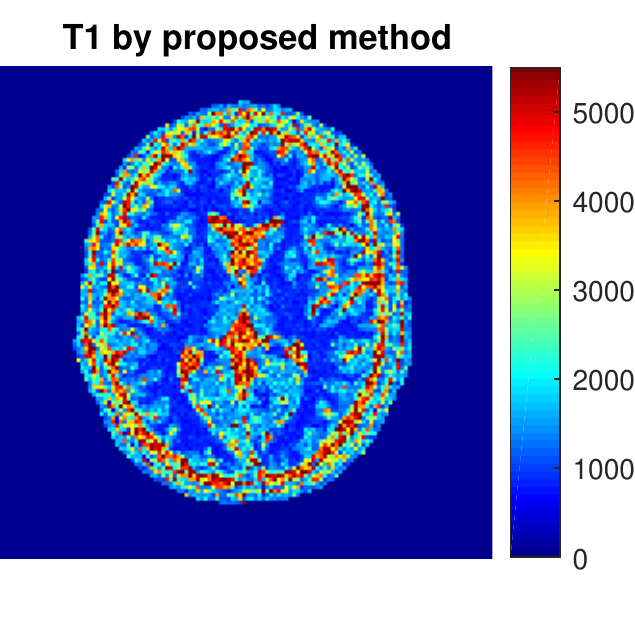}
\includegraphics[width=0.32\textwidth, trim={6.475cm 0 6.475cm 0},clip]{n_4th_sampled_pLM_solution-eps-converted-to.pdf}
\includegraphics[width=0.32\textwidth, trim={12.95cm 0 0 0},clip]{n_4th_sampled_pLM_solution-eps-converted-to.pdf}
\caption{Experiment with undersampled and noisy data.   First row: Initialization of our algorithm, computed by BLIP with a coarse dictionary. Middle row: Result by BLIP with fine dictionary. Last row: Solution by proposed algorithm.}
\label{fig:noise_undersampled_solutions}
\end{figure}
\begin{figure}[!ht]
\centering
\includegraphics[width=0.32\textwidth, trim={0 0 12.95cm 0},clip]{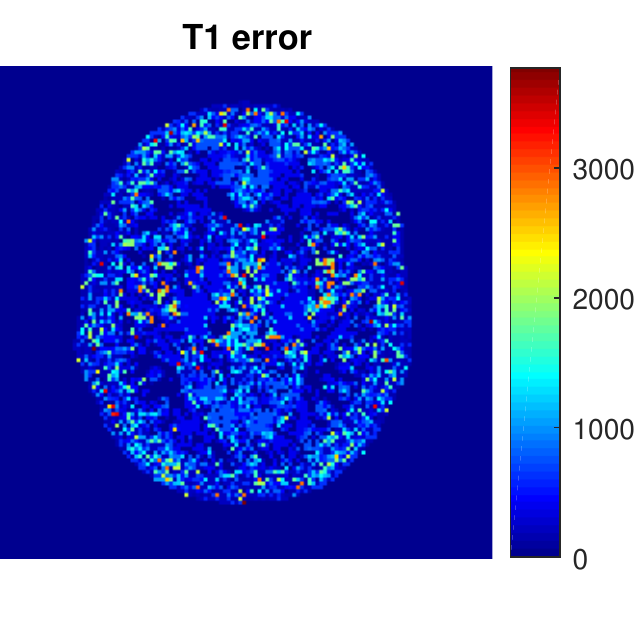}
\includegraphics[width=0.32\textwidth, trim={6.475cm 0 6.475cm 0},clip]{n_4th_sampled_initial_error-eps-converted-to.pdf}
\includegraphics[width=0.32\textwidth, trim={12.95cm 0 0 0},clip]{n_4th_sampled_initial_error-eps-converted-to.pdf}\\
\includegraphics[width=0.32\textwidth, trim={0 0 12.95cm 0},clip]{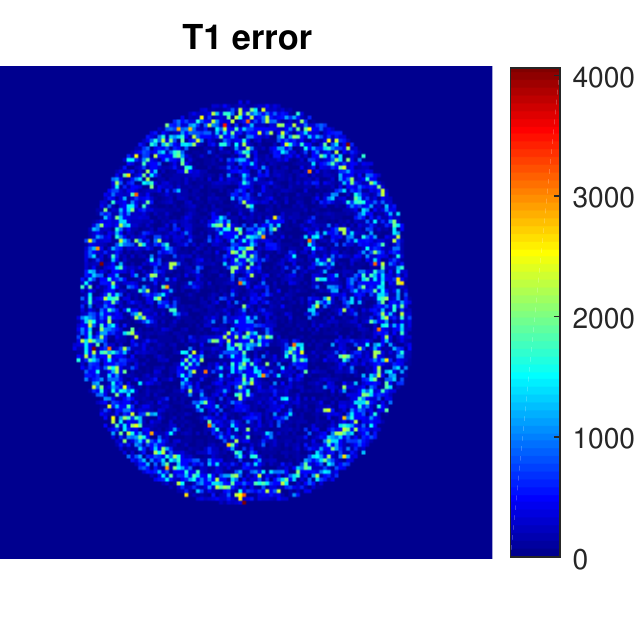}
\includegraphics[width=0.32\textwidth, trim={6.475cm 0 6.475cm 0},clip]{n_4th_sampled_BLIP_error-eps-converted-to.pdf}
\includegraphics[width=0.32\textwidth, trim={12.95cm 0 0 0},clip]{n_4th_sampled_BLIP_error-eps-converted-to.pdf}\\
\includegraphics[width=0.32\textwidth, trim={0 0 12.95cm 0},clip]{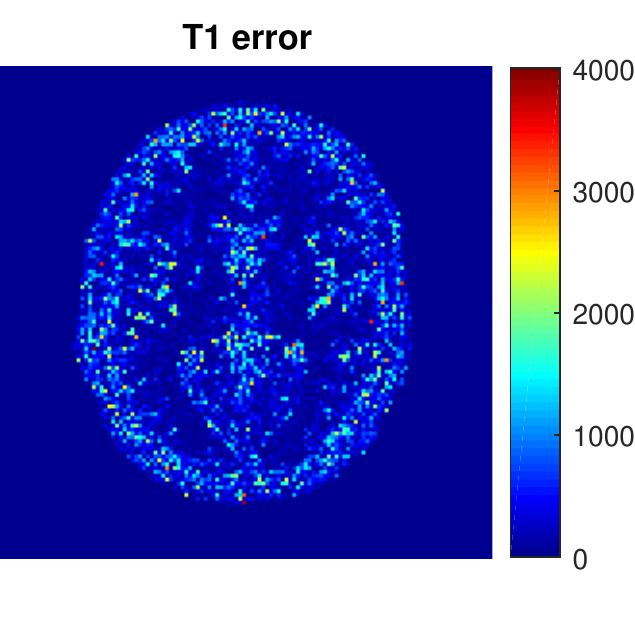}
\includegraphics[width=0.32\textwidth, trim={6.475cm 0 6.475cm 0},clip]{n_4th_sampled_pLM_error-eps-converted-to.pdf}
\includegraphics[width=0.32\textwidth, trim={12.95cm 0 0 0},clip]{n_4th_sampled_pLM_error-eps-converted-to.pdf}\\
\caption{Experiment with undersampled and noisy data. Pointwise distance of the solutions of  Figure \ref{fig:noise_undersampled_solutions}, to the corresponding ground truths of Figure \ref{fig:real_solutions}. First row: Initial error of BLIP with a coarse dictionary. Middle row: error of BLIP with fine dictionary. Last row: Error of the proposed algorithm.}
\label{fig:noise_undersampled_error}
\end{figure}
\begin{figure}[!ht]
\centering
\includegraphics[width=\textwidth]{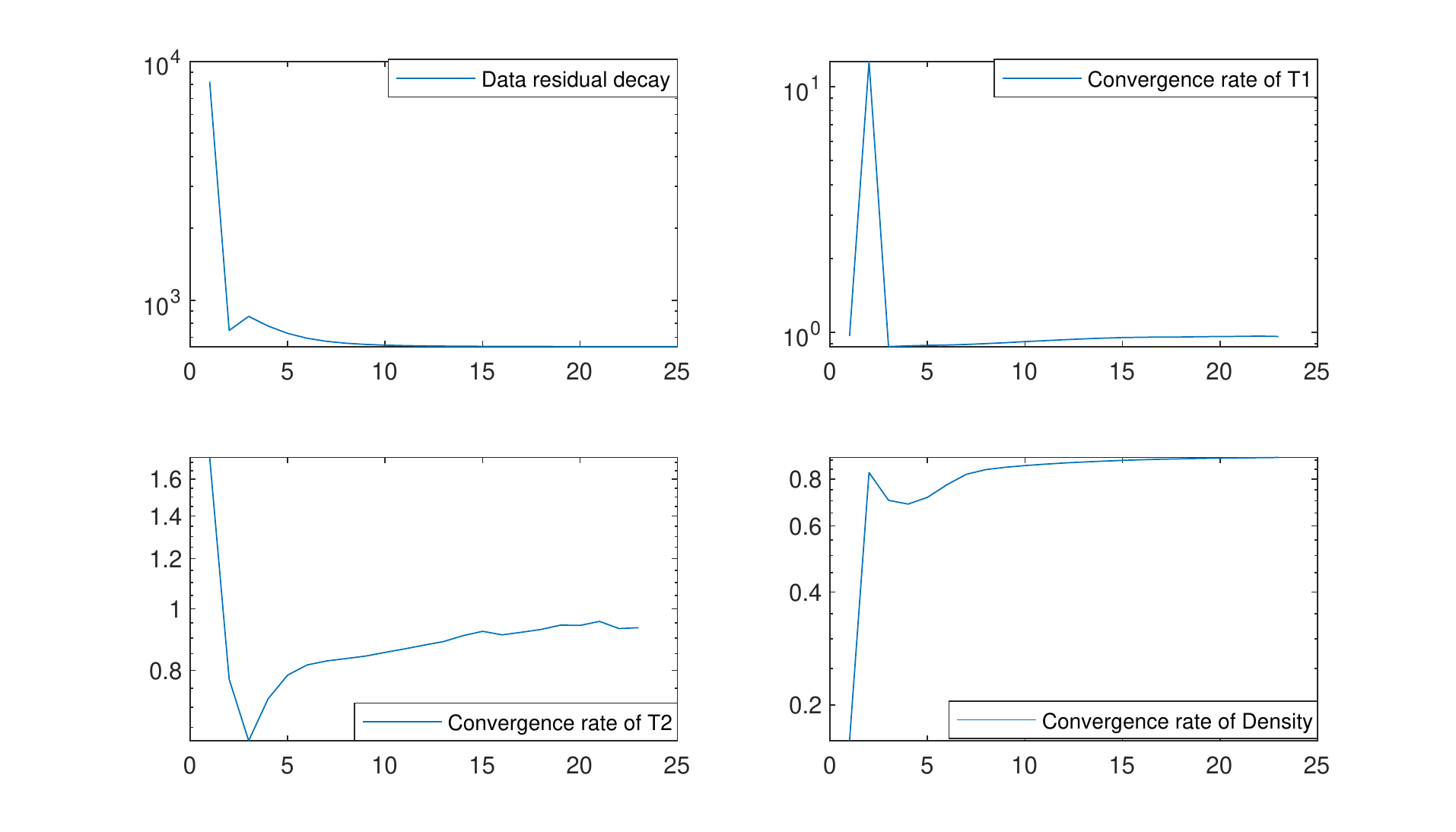}
\caption{Experiment with noisy data.  From left to right and from above to bottom: Convergence of the data residual $\|Q(\f x_{n})-D^\delta\|_{2}$,
convergence rates via plots of the iterate ratios $\frac{\norm{x_{n+1}-x_{n}}_{2}}{\norm{x_{n}-x_{n-1}}_{2}}$ for $x= T_1, T_2, \rho$ respectively.}
\label{fig:noise_undersampled_convergence}
\end{figure}

\subsubsection{Ideal data--fully sampled and no noise}
We also discuss the results for the case of fully sampled and noise-free data.
Here, we only need $L=3$ data frames, which actually equals the number of unknown parameters. Thus, the resulting discrete system is non-singular. For both the  BLIP  and our algorithm, we execute $5$ iterations.
The regularization parameters were chosen as $\mu_{n}=0$ for every $n\in\mathbb{N}$, $\lambda_0=s^2=1$, and $\beta=0$. Note that, as discussed earlier, this choice makes the L-M iteration equivalent to the Gauss-Newton method.

Here, we only show the error maps of the results  in Figure \ref{fig:full_sampled_error}.
We observe that the Gauss-Newton algorithm essentially recovers the ground truth as expected, while the accuracy of BLIP is limited by the discretization mesh of the dictionary.

In contrast to the previous case, as we verify numerically in Figure \ref{fig:full_sampled_convergence},  the convergence rate of the algorithm is superlinear.

\begin{figure}[!ht]
\centering
\includegraphics[width=\textwidth]{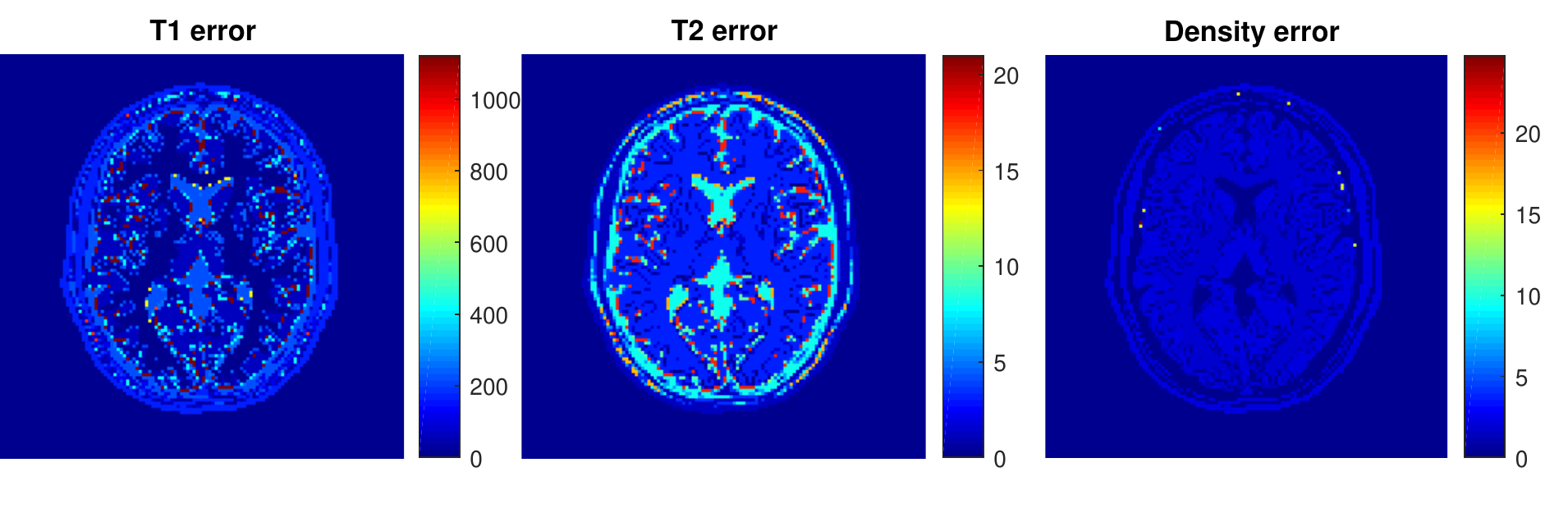}\\
\includegraphics[width=\textwidth]{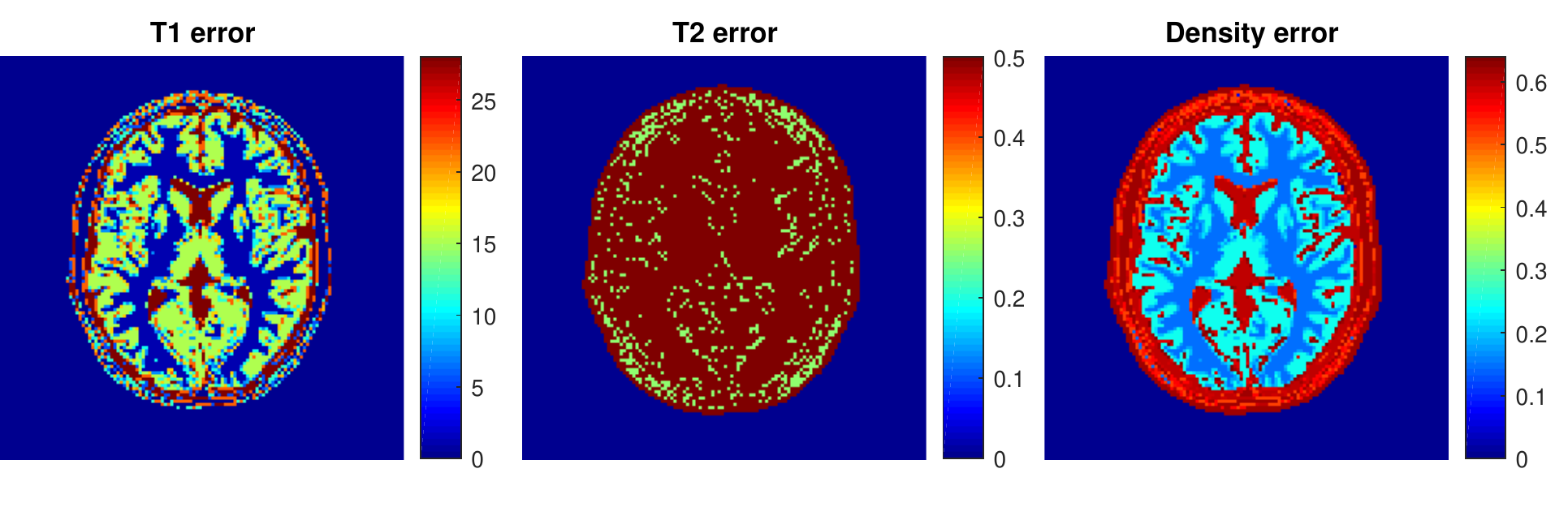}\\
\includegraphics[width=\textwidth]{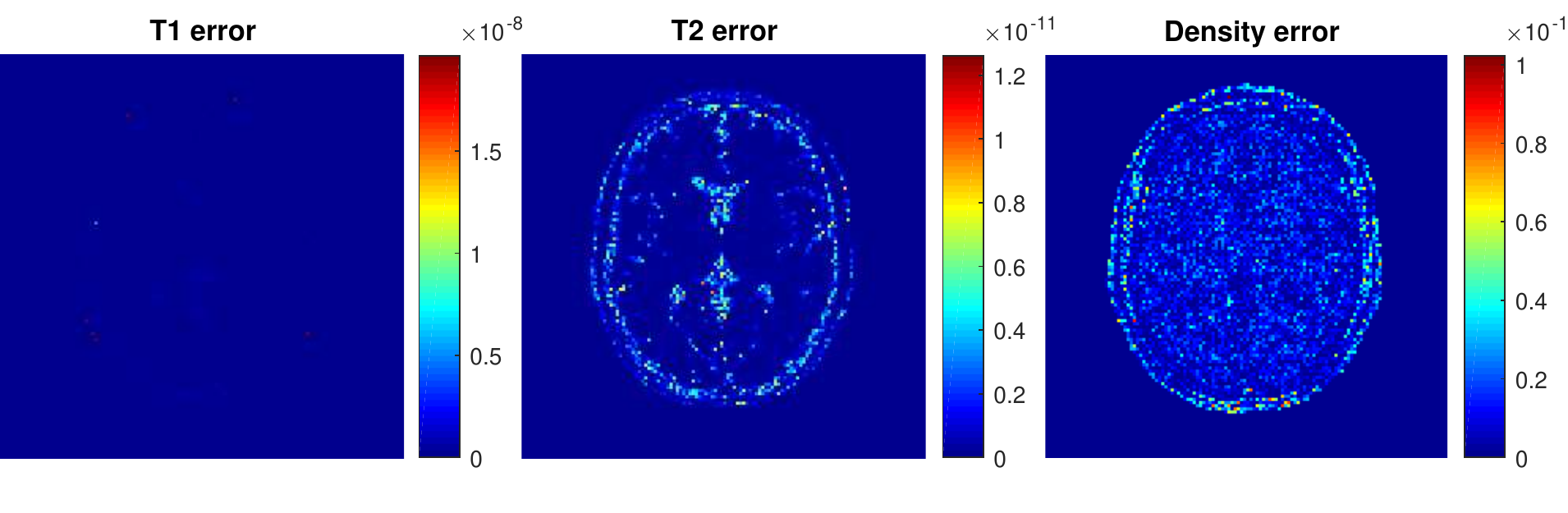}
\hspace{0.61pt}
\caption{Fully sampled data.  Pointwise distance of the solutions of BLIP algorithm and Newton algorithm to the corresponding ground truths. First row: Initial error of BLIP with a coarse dictionary. Middle row: error of BLIP with fine dictionary. Last row: Error of the proposed algorithm.}
\label{fig:full_sampled_error}
\end{figure}

\begin{figure}[!ht]
\centering
\includegraphics[width=\textwidth]{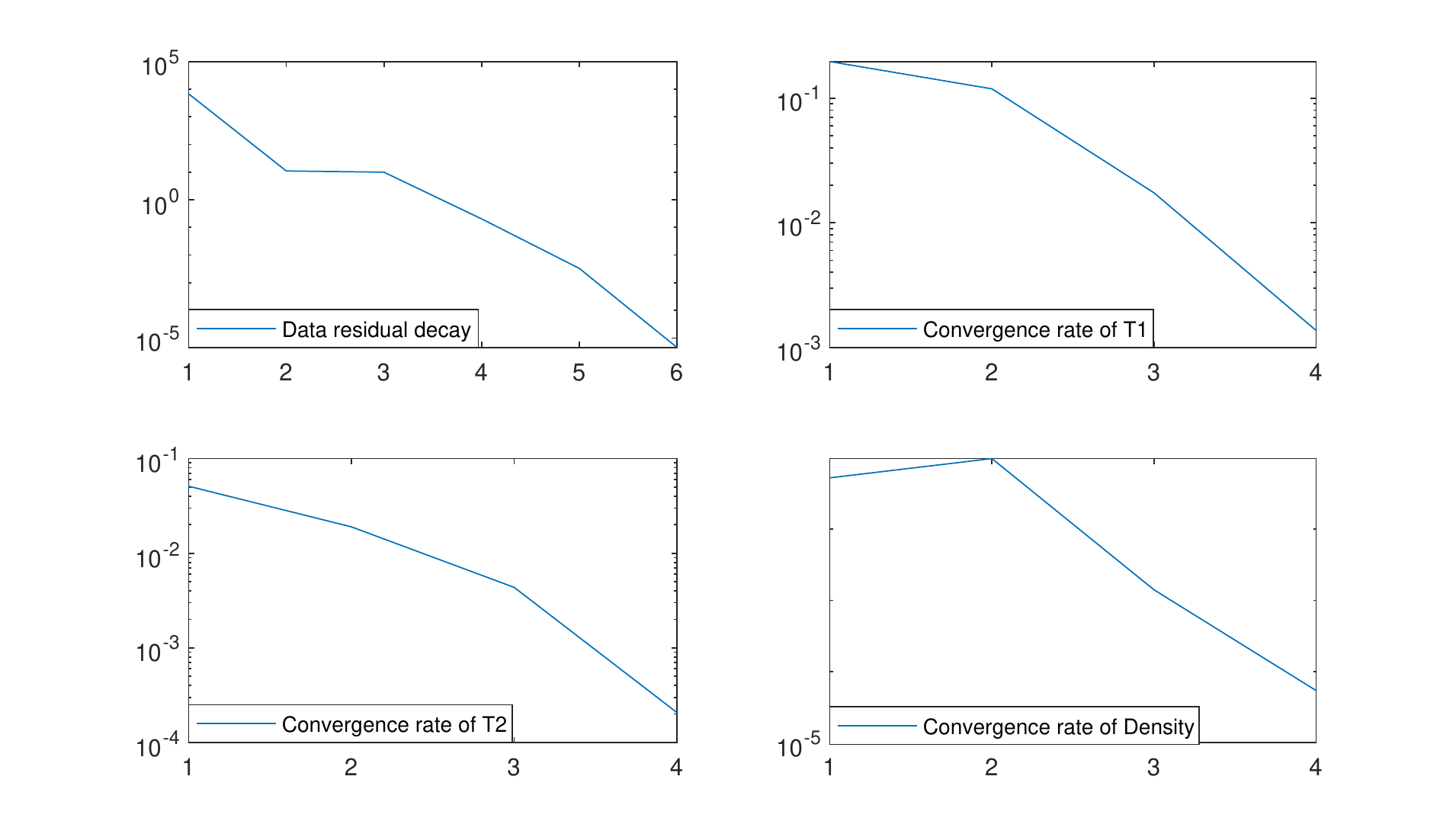}
\caption{Fully sampled data and superlinear convergence of the Newton method. From left to right and from above to bottom: Convergence of the data residual $\|Q(\f x_{n})-D\|_{2}$,
convergence rates via plots of the iterate ratios $\frac{\norm{x_{n+1}-x_{n}}_{2}}{\norm{x_{n}-x_{n-1}}_{2}}$ for $x= T_1, T_2, \rho$ respectively.}
\label{fig:full_sampled_convergence}
\end{figure}

\subsubsection{Quantitative comparisons}
\label{subsubsec:quantitative}
In  Table \ref{tab:Q_result} we provide a summary of further qualitative comparisons for all of the previous tests.
The index in our comparison is the cost in CPU-time as well as the error rates of each algorithm, with the latter defined as
\[ \frac{\|x_{computed}-x_{gt}\|_{2}}{\|x_{gt}\|_{2}},\]
where $x=T_{1}, T_{2}$ or $\rho$.
We provide comparisons with the initial value $\f x_{0}$ and also with the results of the L-M algorithm without projection.
Note that the CPU-time costs for the algorithm proposed here include the time needed for computing the initial value.
\begin{table}[!ht]
\caption{Quantitative summary of the results: computational times and error rates }
\begin{center}
\resizebox{\textwidth}{!}{
\begin{tabular}{ c| llll|llll|llll}
\hline
    &\multicolumn{4}{c|}{Full data}  &\multicolumn{4}{c|}{ $1/8$ sampled data} & \multicolumn{4}{c}{ $1/4$ sampled and noisy data} \\ 
                & time ($s$) & \multicolumn{3}{c|}{ error rate $\frac{\norm{e}_{2}}{\norm{x^*}_{2}}$} &time ($s$) & \multicolumn{3}{c|}{ error rate $\frac{\norm{e}_{2}}{\norm{x^*}_{2}}$}  & time ($s$) &  \multicolumn{3}{c}{ error rate $\frac{\norm{e}_{2}}{\norm{x^*}_{2}}$}  \\[5pt] \hline
                &   & $T_1$ & $T_2$ &  $\rho$  &   & $T_1$ & $T_2$ &  $\rho$  &   & $T_1$ & $T_2$ &  $\rho$   \\ \hline
Initial  & $1.20$  &   $0.036 $ & $0.009 $ & $0.008 $ &$15.26$  & $0.472 $  &  $0.010 $ &  $0.003 $ & $13.67$  &  $0.148 $ &  $0.088 $&  $0.188 $\\ 
BLIP & $78.94$ & $ 0.005 $  &  $0.002 $ & $0.003 $ & $964.04$ &  $0.072 $ & $0.002 $ & $0.001$  &   $1073.86 $ &  $0.078 $ &  $0.019 $ &  $0.028 $ \\ 
L-M & $8.41$  &   $ 1.6*10^{-13} $ & $2.6*10^{-15} $  &  $6.1*10^{-16} $  &$489.82$ & $0.009$  &   $0.001 $ &  $0.0002 $ &  $493.23 $ & $0.072 $ &  $0.014 $&  $0.020 $ \\ 
Proposed&  $8.47$  & $ 1.6*10^{-13} $ & $2.4*10^{-15} $  &  $5.6*10^{-16} $ &$494.56$ & $0.015$  &   $0.002 $ &  $0.0002 $ &  $495.77 $ & $0.070 $ &  $0.011 $&  $0.009 $  \\ 
\hline
\end{tabular}}
\label{tab:Q_result}
\end{center}
\end{table}
\begin{remark}
 {
We note that a latest version of BLIP, called CoverBLIP \cite{GolCheWiaDav18}, appeared recently, and is reported to be $2$-$6$ times faster than the one we have used in our tests. However since the outcomes of BLIP and CoverBLIP are the same, the error rates that we report here will essentially not change. 
}
\end{remark}
From the table we observe that the initial guess has been significantly improved by both algorithms, but in the end our proposed algorithm outperforms the refined BLIP in all of the indices.
We can see from the table that the non-projected L-M algorithm provides comparable results as the projected L-M method in the cases the data with no additive noise. This is not surprising since the initialization produces values well located in the interior of the feasible set. As a consequence the non-projected L-M iterations can almost be reside in the feasible domain. Hence, the projection appears unnecessary in the noise-free case.

Finally, we would like to verify the fact that larger frames sequences can help to get more accurate estimations; compare the discussion at the end of Section \ref{subsec:Chebyshev}. For the results shown in Table \ref{tab:Sequence_comparison} we performed a set of experiments for data frames of increasing length $L$. All data were $1/4$ sub-sampled and corrupted by additive i.d.d. Gaussian noise as described before with variance $1$ and mean $0$. This gives a total signal noise ratio $SNR=15$.
We then ran our L-M algorithm for 20 iterations always using the same initial value $\f x_{0}$, which was generated by using BLIP with $160$ frames and the coarse dictionary as described in Section \ref{subsubsec:subsample_noise}.
The parameter had values $\lambda_n=\mu_n=\lambda_0\norm{Q\f x_n-D^\delta}_{2}$ for all $n\in \mathbb{N}$, and $\lambda_0=10^{-8}$.

The results in Table  \ref{tab:Sequence_comparison} clearly indicate that an increasing number of sequences improves the accuracy of the estimated parameters.
There we have also computed the errors of the standard L-M method, i.e., with no projection.
It can be observed  (also in Table \ref{tab:Q_result} for the noisy case) that the projected L-M method outperforms the standard L-M method of no projection.
Note that the CPU-time costs that we show here do not include initialization as the latter was the same in each case.
\begin{table}[!ht]
\caption{The length of data sequences and its influence on the solution accuracy }
\begin{center}
\resizebox{\textwidth}{!}{
\begin{tabular}{l|llll|llll}
\hline
   & \multicolumn{4}{c|}{Standard L-M method}  &\multicolumn{4}{c}{ Projected L-M method (proposed)}  \\ \hline
 & ER $T_1$ & ER $T_2$ & ER $\rho$  &  time ($s$) & ER $T_1$ & ER $T_2$ & ER $\rho$  &  time ($s$) \\ \hline
$L=5$ & $ 0.2267 $  &  $0.4923 $ & $0.1682 $ & $36.66$ & $ 0.1743 $  &  $0.2028 $ & $0.0424 $ & $36.40$ \\ 
$L=10$ & $ 0.1818 $  &  $0.0805 $ & $0.0757 $ & $59.69$ & $ 0.1699 $  &  $0.0348 $ & $0.0275 $ & $59.33$ \\ 
$L=20$ & $ 0.0542$  &  $0.0182 $ & $0.0317 $ & $104.56$ & $ 0.0290 $  &  $0.0072 $ & $0.0099 $ & $104.65$ \\ 
$L=40$ & $ 0.0413 $  &  $0.0131 $ & $0.0276 $ & $193.65$ & $ 0.0211 $  &  $0.0051 $ & $0.0090 $ & $194.28$ \\ 
$L=80$ & $ 0.0268 $  &  $0.0117 $ & $0.0268 $ & $375.90$ & $ 0.0121 $  &  $0.0043 $ & $0.0087 $ & $374.41$ \\  
$L=160$ & $ 0.0193 $  &  $0.0112  $ & $0.0266  $ & $736.77 $ & $ 0.0078 $  &  $0.0041 $ & $0.0085 $ & $737.45$ \\ \hline
\end{tabular}}
\label{tab:Sequence_comparison}
\end{center}
\end{table}

\subsection{Experiments on radial sub-sampled data}
\label{subsec:exa_numerics}
Next we demonstrate that the proposed method works efficiently also for other types of sub-sampling schemes. 
Additionally, here we consider the proton density function $\rho$ to be complex-valued.
This is particularly relevant when taking into account the effect of coil sensitivities and phase shifts. In such cases, the complex-valued proton density absorbs the coil sensitivity map and phase shift (usually by a multiplication of the actual density function with the sensitivity map). 
Note that in general the sensitivity map can be estimated for a given coil under fixed physical settings, see e.g. the method in \cite{KeeHinKnoKraLau11}. Therefore we may consider them as known quantities. After computing the complex-valued proton density function, and taking into account the known quantities of the coil sensitivity map, one is able to estimate  the actual proton density function by solving algebraic linear equations.
As we deal with synthetic data, this step is omitted in the paper.
For the sake of visualization, we set the imaginary part to be a constant $C$ minus the real part $imag(\rho)=C-real(\rho)$.
The real part is set to be the same as in the previous examples, and we choose $C=180$.

We use again synthetic data but with radial sub-sampling at the sampling rate ($12.1\%$). That is, $16$ out of $128$ angle-strips are used to collect data in each frame. 
We corrupt the data again with additive zero-mean Gaussian noise to the effective part of the magnetization function. 
Three levels of noise are considered: small (SNR=$78.24$), medium (SNR=$19.88$) and large  (SNR=$3.41$), of variance $\sigma^{2}=1$,  $\sigma^{2}=2$, and $\sigma^{2}=5$, which correspond to
the number of acquisition sequences $L=80$, $L=200$ and $L=500$, respectively.
For comparison, noise-free data are also tested with a sampling rate $74.02\%$. 
In such a case, we use an acquisition sequence of length  $L=3$ for reconstruction.

In this set of examples, we compare our results not only to BLIP but also to the FLOR algorithm \cite{MazWeiTalEld18}.
In FLOR, an additional regularisation parameter  $\beta$ is used to control the rank of the matrix representation of the magnetization function.
We choose the parameter $\beta$ manually in order to provide sufficiently good results in the tests. The number of iterations is fixed to be $30$, as suggested in \cite{MazWeiTalEld18}, while for BLIP we keep the same  iteration numbers and step lengths  as in the previous tests.
We adjust the regularization parameter of FLOR to be $\beta=0$, $\beta=50$, $\beta=100$, $\beta=150$,  for noise-free, small, moderate, and large noise-level experiments, respectively.

For the initialization we use a coarse dictionary to generate an initial value $\f x_0$.
For this coarse dictionary,
$T_1$ is discretized from $500ms$ to $5500ms$ with increments of $500ms$, and $T_2$ from $50ms$ to $550ms$ with increments of $50ms$. 
This resulted in a dictionary with $128$ entries only, and needed a complex-valued matrix of dimension $128\times L$ for its representation.
Here, we employ the BLIP algorithm using the coarse dictionary in order to generate the initial guess for our algorithm.
For the BLIP and FLOR algorithm, we use the same refined dictionary as in the previous examples.
The regularization parameter for the projected L-M method is chosen as $\lambda_n= \mu_n= 10^{-10}\norm{Q\f x_n-D^\delta}_{2}$, which turns out to be efficient.

The summary of the numerical tests is presented in Table \ref{tab:Additional}.
Upon inspecting the table, we find that when the level of noise in the data increases, the performance of BLIP  using a finer dictionary deteriorates significantly as it is more affected by the noisy information. 
The FLOR algorithm works more stably under moderate and large noise.  It is particularly efficient for recovering the $T_1$ parameter when compared to the other methods. This is because $T_1$ is more sensitive to noise than the other parameters.
Our proposed method exhibits a relatively high accuracy for estimating $T_2$ and the density $\rho$ (both real and  imaginary parts) in all cases, namely noise-free, small, medium and large noise levels.
We also point out that in the medium and large noise cases, the estimation of $T_1$ by the proposed method is not as good as the results obtained by the FLOR algorithm.
This is not surprising given the analytical expressions of $M'(\theta)$ with respect to $T_1$.
Due to the large magnitude of $T_1$, the linearised operator with respect to $T_1$ is more ill-posed than with respect to the other parameters.
This shows that more sophisticated  schemes for regularizing $T_1$ are required in order to take care of data with strong noise in the current framework, which may  serve as a topic for future investigation.

\begin{table}[!ht]
\caption{Numerical results for radial sub-sampled data with complex proton density function}
\begin{center}
\resizebox{\textwidth}{!}{
\begin{tabular}{ c|llll|llll|llll|llll}
\hline
  &\multicolumn{4}{c|}{Sampling rate $74.02\%$} &\multicolumn{12}{c}{Sampling rate $12.02\%$ }  \\ \hline
  &\multicolumn{4}{c|}{No-noise ER $\frac{\norm{e}_{2}}{\norm{x^*}_{2}}$}&\multicolumn{4}{c|}{S-noise ER $\frac{\norm{e}_{2}}{\norm{x^*}_{2}}$} & \multicolumn{4}{c|}{ M-noise ER $\frac{\norm{e}_{2}}{\norm{x^*}_{2}}$}  &   \multicolumn{4}{c}{ L-noise ER $\frac{\norm{e}_{2}}{\norm{x^*}_{2}}$}  \\  
  &\multicolumn{4}{c|}{SNR=$\infty$, L=$3$} &\multicolumn{4}{c|}{SNR=$78.24$, L=$80$} & \multicolumn{4}{c|}{ SNR=$19.88$, L=$200$}  &   \multicolumn{4}{c}{ SNR=$3.41$, L=$500$}  \\ \hline 
&  $T_1$ & $T_2$ &  r-($\rho$) & i-($\rho$) & $T_1$ & $T_2$ &  r-($\rho$) & i-($\rho$) & $T_1$ & $T_2$ &  r-($\rho$) & i-($\rho$)  & $T_1$ & $T_2$ &  r-($\rho$) & i-($\rho$)    \\ \hline
Initial & $0.156$ & $0.063$  &  $0.025 $ & $0.029$ & $0.207 $ & $ 0.072 $  &  $0.089  $ & $ 0.105 $ & $0.196 $ &  $0.089$ & $ 0.086 $ & $ 0.101$  &   $  0.200$ &  $0.093  $ &  $ 0.095 $ &  $0.112 $ \\ \hline
BLIP & $0.096$ & $0.039$  &  $0.008$ & $  0.008 $ & $0.445 $ & $0.073 $  &  $0.025 $ & $0.033$ & $0.575 $ &  $0.090  $ & $ 0.030 $ & $0.039 $  &   $ 0.773 $ &  $ 0.125 $ &  $ 0.032 $ &  $ 0.040$ \\ \hline
FLOR & $0.095$ & $0.082$  &  $ 0.007$ & $0.008$  &  $0.120 $ & $ 0.061  $  &  $ 0.056 $ & $ 0.076 $ & $0.065 $ &  $ 0.046 $ & $ 0.062 $ & $0.079 $  &   $0.069  $ &  $ 0.051 $ &  $ 0.070$ &  $ 0.086$ \\ \hline
Proposed & $0.034$ & $0.023$  &  $ 0.001 $ & $0.001$ &   $0.125 $ & $ 0.019 $  &  $0.006$ & $ 0.006$ & $0.134 $ &  $ 0.016 $ & $ 0.008 $ & $0.007$  &   $0.207  $ &  $ 0.027 $ &  $ 0.014 $ &  $ 0.013$ \\ 
\hline
\end{tabular}}
\label{tab:Additional}
\end{center}
\end{table} 

For the sake of space, we only show here the visualization comparisons in the case of medium noise level. Figure \ref{fig:m_radial_solution} presents the ground truth solution and also solutions of the estimated parameter functions using the different methods, and Figure \ref{fig:m_radial_error} shows the relative error map of each method. Furthermore, Figure \ref{fig:m_radial_convergence} verifies the linear convergence rate of the proposed method for complex-valued density functions. 

\begin{figure}[!ht]
\centering
\includegraphics[width=\textwidth]{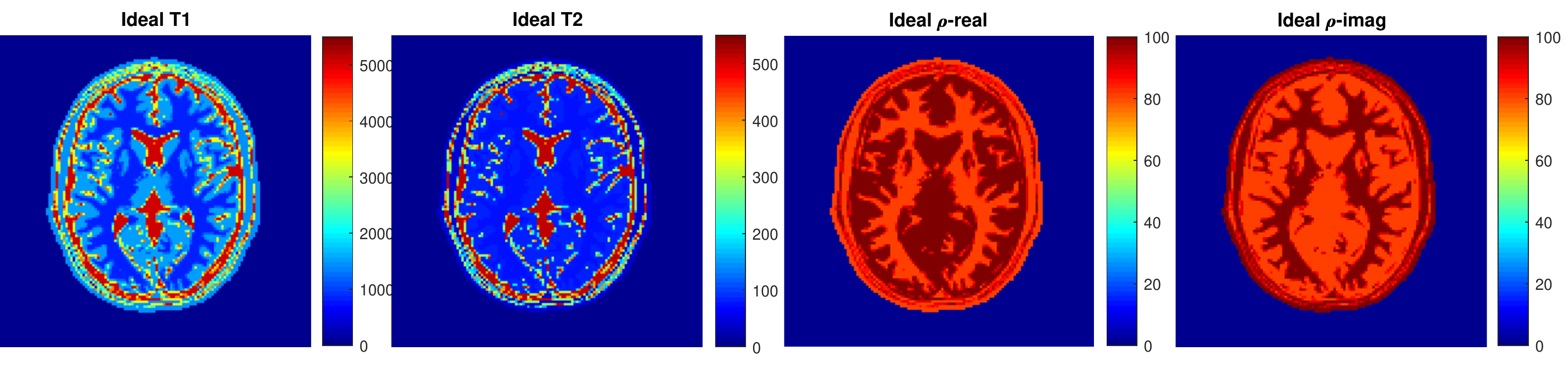}\\
\includegraphics[width=\textwidth]{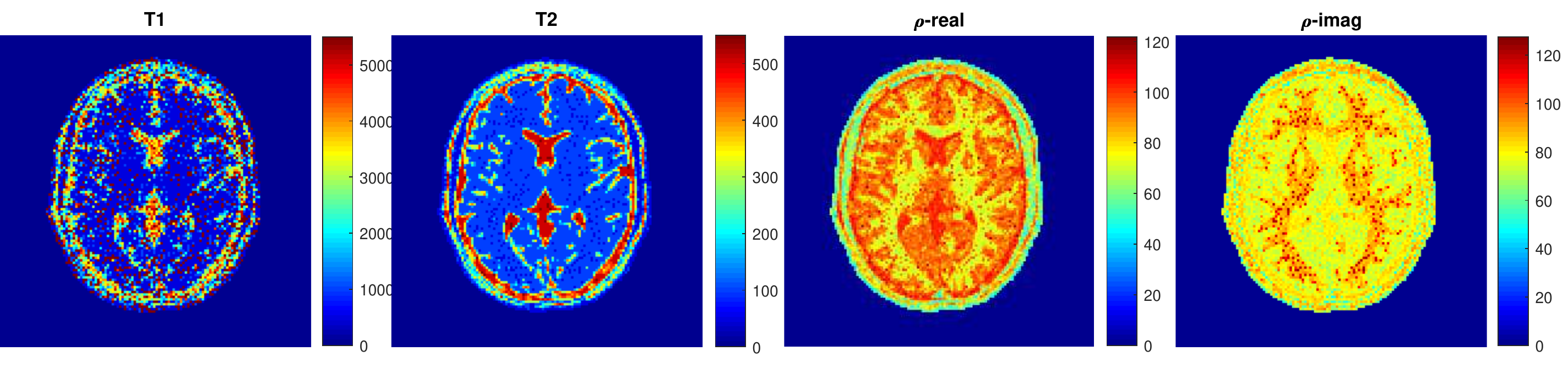}\\
\includegraphics[width=\textwidth]{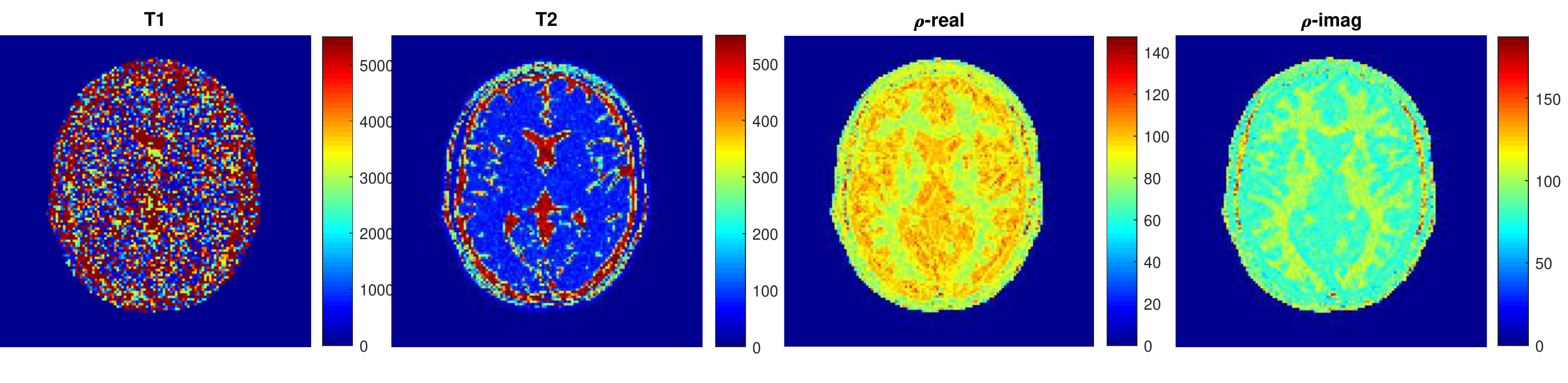}\\
\includegraphics[width=\textwidth]{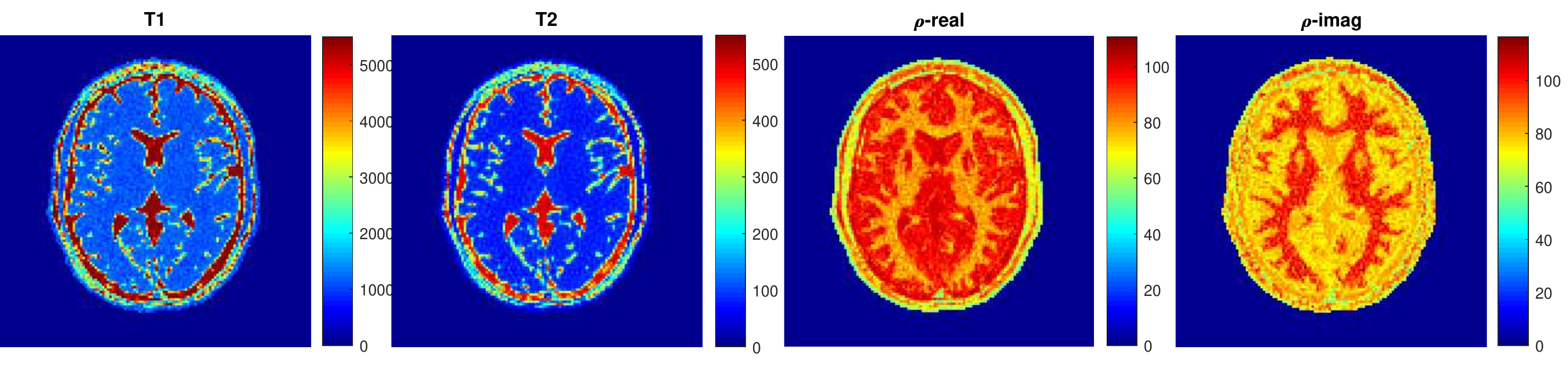}\\
\includegraphics[width=\textwidth]{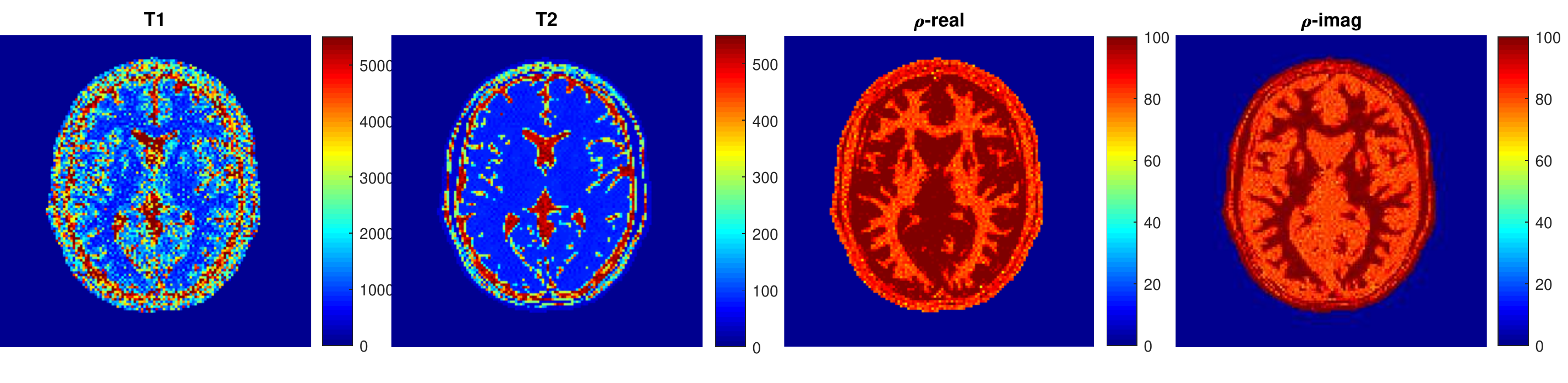}
\label{fig:m_radial_solution}
\caption{Experiment with medium-noise (SNR$\simeq 20$) of radial-sampled data at rate $12.1\%$. First row: Ground truth.
Second row: Initial solution of BLIP with a coarse dictionary. Third row: Solution of BLIP with fine dictionary. Fourth row: Solution of FLOR with fine dictionary. Last row: Solution of the proposed algorithm.}
\end{figure}

\begin{figure}[!ht]
\centering
\includegraphics[width=\textwidth]{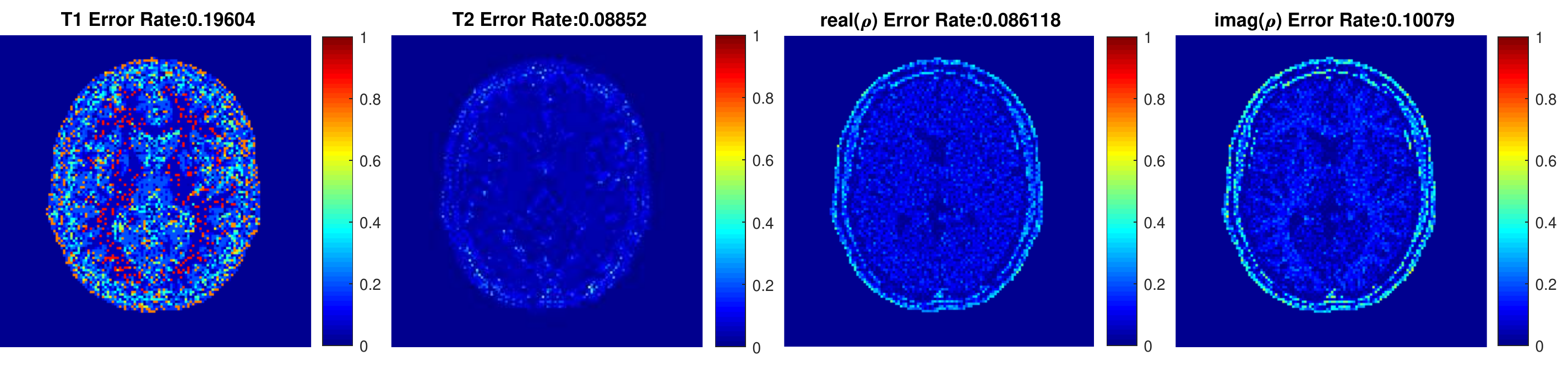}\\
\includegraphics[width=\textwidth]{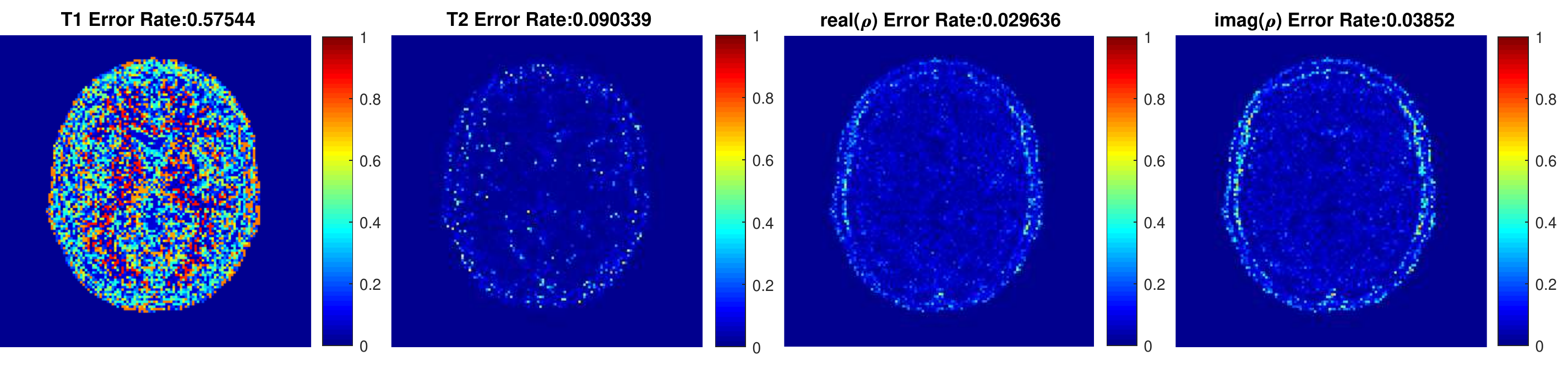}\\
\includegraphics[width=\textwidth]{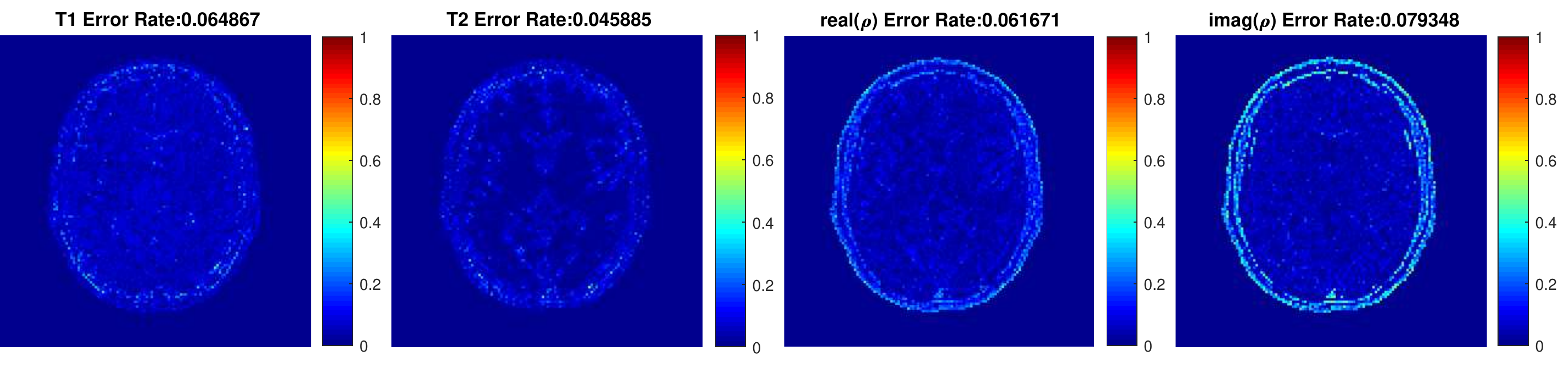}\\
\includegraphics[width=\textwidth]{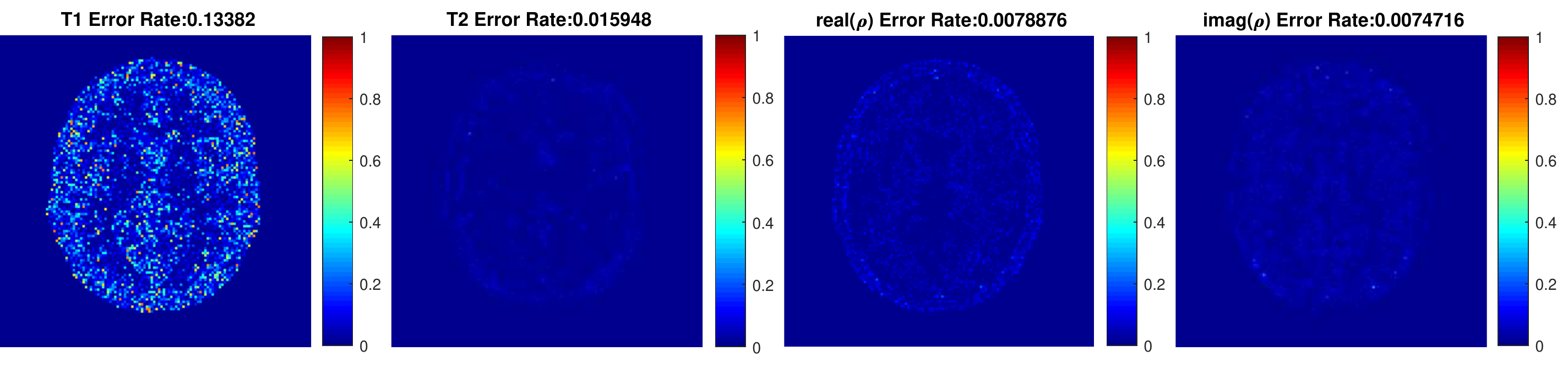}
\label{fig:m_radial_error}
\caption{Experiment with medium-noise (SNR$\simeq 20$) of radial-sampled data at rate $12.1\%$. Relative error map of solutions presented in Figure \ref{fig:m_radial_solution}. First row: Initial error of BLIP with a coarse dictionary. Second row: Error of BLIP with fine dictionary. Third row: Error of FLOR with fine dictionary.
Last row: Error of the proposed algorithm.}
\end{figure}

\begin{figure}[!ht]
\centering
\includegraphics[width=\textwidth]{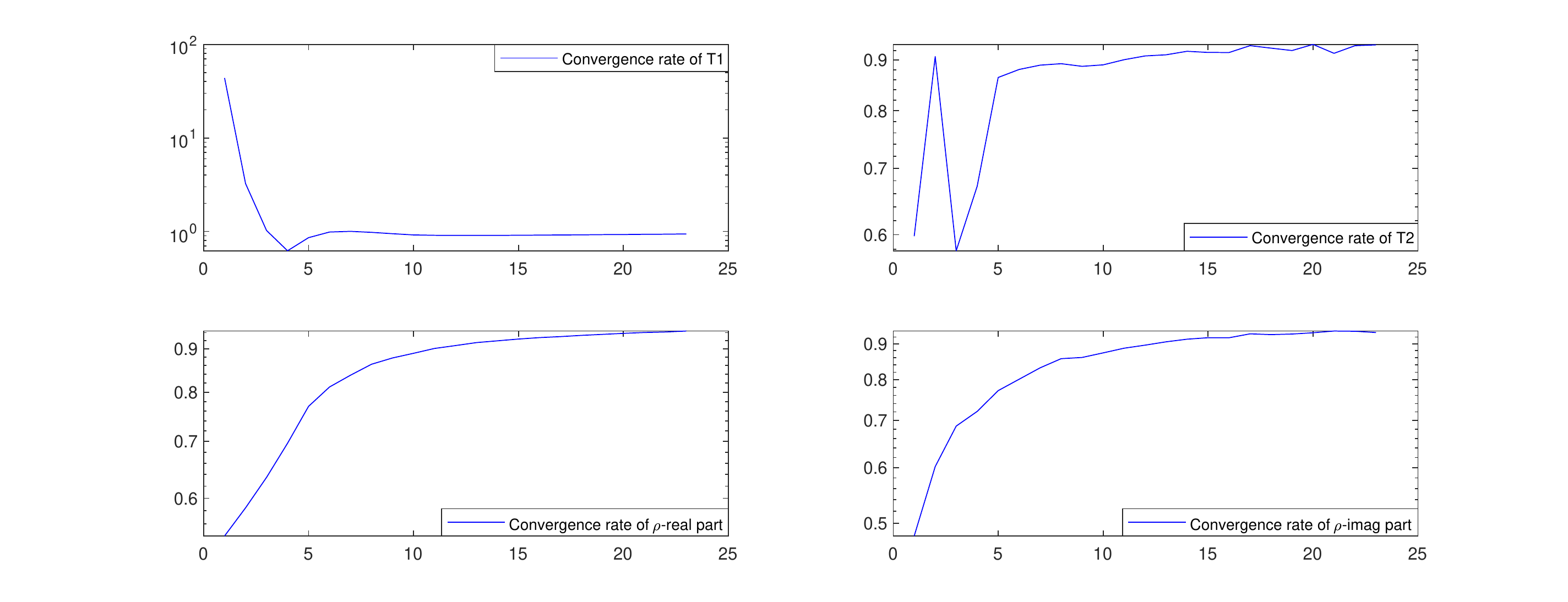}\\
\label{fig:m_radial_convergence}
\caption{Experiment with medium-noise(SNR$\simeq 20$) of radial-sampled data at rate $12.1\%$.  From left to right and from above to bottom: Convergence rates via plots of the iterate ratios $\frac{\norm{x_{n+1}-x_{n}}_{2}}{\norm{x_{n}-x_{n-1}}_{2}}$ for $x= T_1, T_2, \text{real}(\rho), \text{imag}(\rho)$ respectively.}
\end{figure}

\section{Concluding remarks}
In this paper,  we have analysed MRF from the perspective of inverse problems, and we were able to provide some mathematical insights in order to better understand the functionalities of the method.
Subsequently, we have proposed a novel model for quantitative MRI which is in accordance  with the standard routine of the MRI experiment setting. The model is dictionary-free and incorporates the physical setting of MRI into one single non-linear equation. We have proposed a robust algorithm that was shown to be capable of estimating the tissue parameters with high precision.
In contrast to the original MRF method and many of its variants, it does not rely on refining a dictionary to improve the accuracy. Even though our new algorithm is based on a specific Bloch dynamics referred to as IR-bSSFP,  this constitutes by no means a limitation for the method.
Rather, other type of discrete dynamics or approximations to Bloch equations can be fitted to this approach as well. Furthermore,  we have considered  the relaxation parameters $T_1$, $T_2$ and the proton density $\rho$ as unknowns  in the present paper, but as long as parameters can be related to the Bloch dynamics, there would be no difficulty to extending the algorithm to incorporate the further parameters into the current framework. 

 {Regarding future work, numerical results suggest that more sophisticated regularization schemes are needed in order to better estimate $T_1$  when data contain strong noise.
Furthermore, the partial volume effect for low resolution images, which has been considered in the literature, may also be addressed in our framework. Indeed, one potential way of doing so is to enforce certain regularity on the variation of the parameter functions. For instance, one may invoke total variation or total generalized variation regularization priors, among many others. }

\subsection*{Acknowledgements}
This work has been conducted within the \textsc{Matheon} Research Center project CH12
funded by the Einstein Center for Mathematics (ECMath) Berlin within its Innovation Area ``Mathematics in Clinical Research and Health Care''.
GD thanks Barbara Kaltenbacher for a helpful discussion on Levenberg-Marquardt methods for ill-posed problems during an ESI workshop in Vienna.
The authors would also like to thank the anonymous reviewers for their valuable remarks and comments that resulted in an improved version of the paper.

\appendix
\subsection*{Appendix: Solutions of Bloch equations with different cases}
Here we briefly review several simplified cases towards the solutions of the Bloch equations,
which are helpful in order to understand the simulations based on discrete dynamics.
More detailed descriptions can be found in \cite{Kup00,Nis10}.
Note that here we omit the position dependence in Bloch equations.
\subsubsection*{Only main field with no relaxation}
The Bloch equations, in a setting which only takes into account the main magnetic field and with no relaxation, represent an autonomous dynamical system, that is
\begin{equation*}
 \frac{\partial m}{\partial t} =  m \times\gamma B_0.
\end{equation*}
The solution in this case is
\[m(t)=P_{\omega_0}(t)m(0), \]
where
\[
P_{\omega_0}(t)=\left(
\begin{array}{ccc}
\cos(\omega_0 t ) & \sin(\omega_0 t ) & 0\\
-\sin(\omega_0 t )  &\cos(\omega_0 t ) & 0\\
  0 & 0& 1 
\end{array}\right),\quad \text{ and } \omega_0 =\gamma\abs{B_0}.
\]
It can be interpreted in a way that the magnetization precesses about the main magnetic field at a  frequency $\omega_0$, called Larmor frequency.
\subsubsection*{Main field with relaxation}
This is the case of Bloch equations \eqref{eq:Bloch_intro} with $B(t,r)=B_0$,
meaning that radio frequencies and gradient fields are not considered here. After some change of variable and further calculations, the solution turns out to be
\[m(t)=P_{\omega_0}(t)E(t)  m(0) +(1-e^{-\frac{t}{T_1}})m_e, \]
where
\[
E(t)=\left(
\begin{array}{ccc}
e^{-\frac{t}{T_2}} & 0 & 0\\
 0  &e^{-\frac{t}{T_2}} & 0\\
  0 & 0& e^{-\frac{t}{T_1}} 
\end{array}\right).\]
Note that the matrices $P_{\omega_0}(t)$ and $E(t)$ are commutable.
\subsubsection*{With perturbations and without relaxation}
By perturbation we mean that there is a $B_1$ field which rotates at the Larmor frequency, and it is always orthogonal to the main field, such that $\langle B_0,B_1\rangle =0$. This models the excitation of radio pulses in the MRI machine.
By convention, the direction of the $B_1$ field can be defined to be along the $x$-axis in space.
Since in reality, the excitation pulse only lasts for a very short length of time in comparison with $T_1$ and $T_2$, we can ignore the relaxation terms.
The solution of \eqref{eq:Bloch_intro} in the case of no relaxation terms but with perturbation is
\[ m(t) =P_{\omega_0}(t) R_x(\alpha(t)) m(0),\]
where  $\alpha(t):=\gamma \int_0^t\abs{B_1(s)}ds$ is the flip angle, and
\[R_x(t)=\left(
\begin{array}{ccc}
1& 0 & 0 \\
0& \cos(\alpha(t)) & \sin(\alpha(t)) \\
0& -\sin(\alpha(t))  &\cos(\alpha(t))
\end{array}\right).\]

\subsubsection*{With perturbations and relaxation}
Finally we are able to simulate the solutions of \eqref{eq:Bloch_intro} in the case where both the perturbations of the main field and relaxation terms are taken into account.
This is based on the assumption that the excitation pulse is turned on at the time period $(0,t_0)$, where $t_0$ is a very small number in comparison to the relaxation parameters.
Therefore, we can estimate the solution of \eqref{eq:Bloch_intro} with the following formula:
\[m(t)= P_{\omega_0}(t)E(t)R_x(\alpha(t_0))  m(0)+ (1-e^{-\frac{t}{T_1}})m_e.\]
The main tool in all of the above calculations is to change variables to a rotating frame of reference in order to match the Larmor precession.


\bibliographystyle{amsplain}
\bibliography{mybibfile}

\providecommand{\bysame}{\leavevmode\hbox to3em{\hrulefill}\thinspace}
\providecommand{\MR}{\relax\ifhmode\unskip\space\fi MR }
\providecommand{\MRhref}[2]{%
  \href{http://www.ams.org/mathscinet-getitem?mr=#1}{#2}
}
\providecommand{\href}[2]{#2}
\begin{thebibliography}{10}

\bibitem{Brainweb}
\emph{Brainweb: Simulated brain database},
  http://www.bic.mni.mcgill.ca/brainweb/.

\bibitem{Ass_etal18}
J.~Assl\"ander, M.A. Cloos, F.~Knoll, D.K. Sodickson, J.~Hennig, and
  R.~Lattanzi, \emph{Low rank alternating direction method of multipliers
  reconstruction for {MR} fingerprinting}, Magnetic Resonance Medicine
  \textbf{79} (2018), 83--96, \url{https://dx.doi.org/10.1002/mrm.26639}.

\bibitem{BenPraDelMit07}
A.~Bensoussan, G.~Da~Prato, M.C. Delfour, and S.K. Mitter, \emph{Representation
  and control of infinite dimensional systems}, second ed., Systems and Control
  Foundations and Applications, Birkhauser, 2007.

\bibitem{Blo46}
F.~Bloch, \emph{Nuclear induction}, Physical Review \textbf{70} (1946),
  460--473, \url{https://dx.doi.org/10.1103/PhysRev.70.460}.

\bibitem{Col_etal_98}
D.L. Collins, A.P. Zijdenbos, V.~Kollokian, J.G. Sled, N.J. Kabani, C.J.
  Holmes, and A.C. Evans, \emph{Design and construction of a realistic digital
  brain phantom}, IEEE Transactions on Medical Imaging \textbf{17} (1998),
  no.~3, 463--468, \url{https://doi.org/10.1109/42.712135}.

\bibitem{DavPuyVanWia14}
M.~Davies, G.~Puy, P.~Vandergheynst, and Y.~Wiaux, \emph{A compressed sensing
  framework for magnetic resonance fingerprinting}, SIAM Journal on Imaging
  Sciences \textbf{7} (2014), no.~4, 2623--2656,
  \url{https://doi.org/10.1137/130947246}.

\bibitem{Deu04}
P.~Deuflhard, \emph{Newton methods for nonlinear problems: Affine invariance
  and adaptive algorithms}, first ed., Springer series in computational
  mathematics, vol.~35, Springer, 2004.

\bibitem{DonAmtKokSomBor17}
M.~Doneva, T.~Amthor, P.~Koklen, K.~Sommer, and P.~B\"ornert, \emph{Matrix
  completion-based reconstruction for undersampled magnetic resonance
  fingerprinting data}, Magnetic Resonance Imaging \textbf{41} (2017), 41--52,
  \url{https://doi.org/10.1016/j.mri.2017.02.007}.

\bibitem{DuaRepGomDavWia18}
A.~Duarte, R.~Repetti, P.A. Gomez, M.E. Davies, and Y.~Wiaux, \emph{Greedy
  approximate projection for magnetic resonance fingerprinting with partial
  volumes}, arXiv preprint (2018), \url{https://arxiv.org/abs/1807.06912}.

\bibitem{Eva10}
L.S. Evans, \emph{Partial differential equations}, second ed., Graduate studies
  in mathematics, vol.~19, American Mathematical Society, 2010.

\bibitem{FanYua05}
J.~Fan and Y.~Yuan, \emph{On the quadratic convergence of the
  {L}evenberg-{M}arquardt method without nonsingularity assumption}, Computing
  \textbf{74} (2005), 23--39, \url{https://doi.org/10.1007/s00607-004-0083-1}.

\bibitem{Fer82}
K.~Ferentinos, \emph{On {T}chebycheff's type inequalities}, Trabajos de
  Estadistica y de Investigacion Operativa \textbf{33} (1982), no.~1, 125--132,
  \url{https://doi.org/10.1007/BF02888707}.

\bibitem{GolCheWiaDav18}
M.~Golbabaee, Z.~Chen, Y.~Wiaux, and M.E. Davies, \emph{Cover{BLIP}: scalable
  iterative matched filtering for {MR} fingerprint recovery}, arXiv preprint
  (2018), \url{https://arxiv.org/abs/1810.01967}.

\bibitem{GuoSonZha17}
W.~Guo, Guohui Song, and Yue Zhang, \emph{{PCM}-{TV}-{TFV}: A novel two-stage
  framework for image reconstruction from {F}ourier data.}, SIAM Journal on
  Imaging Sciences \textbf{10} (2017), no.~4, 2250--2274,
  \url{https://doi.org/10.1137/17M1130666}.

\bibitem{Han97}
M.~Hanke, \emph{A regularizing {L}evenberg-{M}arquardt scheme, with
  applications to inverse groundwater filtration problems}, Inverse Problems
  \textbf{13} (1997), 79--95, \url{https://doi.org/10.1088/0266-5611/13/1/007}.

\bibitem{HinRauWuLan17}
M.~Hinterm\"uller, C.N. Rautenberg, T.~Wu, and A.~Langer, \emph{Optimal
  selection of the regularization function in a weighted total variation model.
  {P}art {II}: Algorithm, its analysis and numerical tests}, Journal of
  Mathematical Imaging and Vision \textbf{59} (2017), no.~3, 515--533,
  \url{https://doi.org/10.1007/s10851-017-0736-2}.

\bibitem{HuaGraClaBilAlt12}
C.~Huang, C.G. Graff, E.W. Clarkson, A.~Bilgin, and M.I. Altbach, \emph{T2
  mapping from highly undersampled data by reconstruction of principal
  component coefficient maps using compressed sensing}, Magnetic Resonance in
  Medicine \textbf{67} (2012), 1355--1366,
  \url{https://doi.org/10.1002/mrm.23128}.

\bibitem{ItoKun08}
K.~Ito and K.~Kunisch, \emph{Lagrange multiplier approach to variational
  problems and applications}, Advances in Design and Control, vol.~15, Society
  for Industrial and Applied Mathematics, 2008.

\bibitem{KalNeuSch08}
B.~Kaltenbacher, A.~Neubauer, and O.~Scherzer, \emph{Iterative regularization
  methods for nonlinear ill-posed problems}, Radon Series on Computational and
  Applied Mathematics, vol.~6, De Gruyter, 2008.

\bibitem{KanYamFuk04}
C.~Kanzow, N.~Yamashita, and M.~Fukushima, \emph{{L}evenberg-{M}arquadt methods
  with strong local convergence properties for solving nonlinear equations with
  convex constraints}, Journal of Computational and Applied Mathematics
  \textbf{172} (2004), 375--397,
  \url{https://doi.org/10.1016/j.cam.2004.02.013}.

\bibitem{Karm14}
C.~Karmonik, A.~Malaty, M.~Bikram, P.~Schmitt, S.~Partovi, and D.J. Shah,
  \emph{Fast in vivo quantification of {T1} and {T2} {MRI} relaxation times in
  the myocardium based on {I}nversion {R}ecovery {SSFP} with in vitro
  validation post {Gd}-based contrast administration}, Cardiovascular Diagnosis
  \& Therapy \textbf{4} (2014), no.~2, 88--95,
  \url{https://dx.doi.org/10.3978/j.issn.2223-3652.2013.12.01}.

\bibitem{KeeHinKnoKraLau11}
S.L. Keeling, M.~Hinterm\"uller, F.~Knoll, D.~Kraft, and A.~Laurain, \emph{A
  total variation based approach to correcting surface coil magnetic resonance
  images}, Applied Mathematics and Computation \textbf{218} (2011), no.~2,
  219--232, \url{https://doi.org/10.1016/j.amc.2011.03.002}.

\bibitem{knoll2011second}
F.~Knoll, K.~Bredies, T.~Pock, and R.~Stollberger, \emph{Second order total
  generalized variation ({TGV}) for {MRI}}, Magnetic Resonance in Medicine
  \textbf{65} (2011), no.~2, 480--491,
  \url{http://dx.doi.org/10.1002/mrm.22595}.

\bibitem{Kup00}
V.~Kuperman, \emph{Magnetic resonance imaging: physical principles and
  applications}, first ed., Electromagnetism, Academic Press, 2000.

\bibitem{Lar_etal89}
H.~Larsson, J.~Frederiksen, J.~Petersen, I.~Nordenbo, A.and~Zeeberg,
  O.~Henriksen, and J.~Olesen, \emph{Assessment of demyelination, edema, and
  gliosis by in vivo determination of {T1} and {T2} in the brain of patients
  with acute attack of multiple sclerosis.}, Magnetic Resonance in Medicine
  \textbf{11} (1989), 337--348,
  \url{https://dx.doi.org/10.1002/mrm.1910110308}.

\bibitem{LusDonSanPau08}
M.~Lustig, D.L. Donoho, J.M. Santos, and J.M. Pauly, \emph{Compressed sensing
  {MRI}}, IEEE Signal Processing Magazine \textbf{25} (2008), 72--82,
  \url{https://dx.doi.org/10.1109/MSP.2007.914728}.

\bibitem{Ma_etal13}
D.~Ma, V.~Gulani, N.~Seiberlich, K.~Liu, J.~Sunshine, J.L. Duerk, and M.A.
  Griswold, \emph{Magnetic resonance fingerprinting}, Nature \textbf{495}
  (2013), no.~187, 187--193, \url{https://dx.doi.org/10.1038/nature11971}.

\bibitem{MazWeiTalEld18}
G.~Mazor, L.~Weizman, A.~Tal, and Y.C. Eldar, \emph{Low-rank magnetic resonance
  fingerprinting}, Medical Physics \textbf{45} (2018), no.~9, 4066--4084,
  \url{https://dx.doi.org/10.1002/mp.13078}.

\bibitem{McgPieMaJiaSayGulGri14}
D.F. McGivney, E.~Pierre, D.~Ma, Y.~Jiang, H.~Saybasili, V.~Gulani, and M.A.
  Griswold, \emph{{SVD} compression for magnetic resonance fingerprinting in
  the time domain}, IEEE Transactions on Medical Imaging \textbf{33} (2014),
  no.~12, 2311 -- 2322, \url{https://doi.org/10.1109/TMI.2014.2337321}.

\bibitem{Mck93}
G.C. McKinnon, \emph{Ultrafast interleaved gradient-echo-planar imaging on a
  standard scanner}, Magnetic Resonance in Medicine \textbf{30} (1993),
  609--616, \url{https://doi.org/10.1002/mrm.1910300512}.

\bibitem{Nis10}
D.~Nishimura, \emph{Principles of magnetic resonance imaging}, second ed.,
  Stanford University, 2010.

\bibitem{Pap91}
A.~Papoulis, \emph{Probability, random variables, and stochastic processes},
  third ed., McGraw-Hill Series in Electrical Engineering, McGraw Hill Higher
  Education, 1991.

\bibitem{PieMaCheBadGri16}
E.Y. Pierre, D.~Ma, Y.~Chen, C.~Badve, and M.A. Griswold, \emph{Multiscale
  reconstruction for {MR} fingerprinting}, Magnetic Resonance in Medicine
  \textbf{75} (2016), 2481--2492, \url{https://dx.doi.org/10.1002/mrm.25776}.

\bibitem{Sbr_etal17}
A.~Sbrizzi, T.~Bruijnen, O.~van~der Heide, P.~Luijten, and C.A.T. van~den Berg,
  \emph{Dictionary-free {MR} fingerprinting reconstruction of balanced-{GRE}
  sequences}, arXiv preprint (2017), \url{https://arxiv.org/abs/1711.08905v1}.

\bibitem{Sbr_etal18}
A.~Sbrizzi, O.~van~der Heide, M.~Cloos, A.~van~der Toorn, H.~Hoogduin, P.R.
  Luijten, and C.A.T. van~der Berg, \emph{Fast quantitative {MRI} as a
  nonlinear tomography problem}, Magnetic Resonance Imaging \textbf{46} (2018),
  56--63, \url{https://dx.doi.org/10.1016/j.mri.2017.10.015}.

\bibitem{Sche99}
K.~Scheffler, \emph{A pictorial description of steady-states in rapid magnetic
  resonance imaging}, Concepts in Magnetic Resonance \textbf{11} (1999), no.~5,
  187--193, \url{https://dx.doi.org/10.1038/nature11971}.

\bibitem{Tes12}
G.~Teschl, \emph{Ordinary differential equations and dynamical systems}, first
  ed., Graduate Studies in Mathematics, vol. 140, American Mathematical
  Society, 2012.

\bibitem{Wri97}
G.A. Wright, \emph{Magnetic resonance imaging}, IEEE Signal Processing Magazine
  \textbf{14} (1997), 56--66, \url{https://dx.doi.org/10.1109/79.560324}.

\bibitem{YamFuk01}
N.~Yamashita and M.~Fukushima, \emph{On the rate of convergence of the
  {L}evenberg-{M}arquardt method}, Topics in Numerical Analysis: With Special
  Emphasis on Nonlinear Problems (G.~Alefeld and X.~Chen, eds.), Computing
  Supplementa book series, Springer, 2001,
  \url{https://dx.doi.org/10.1007/978-3-7091-6217-0_18}, pp.~239--249.

\bibitem{Zha_etal18}
B.~Zhao, K.~Setsompop, E.~Adalsteinsson, B.~Gagoski, H.~Ye, d.~Ma, Y.~Jiang,
  P.~Ellen~Grant, M.A. Griswold, and L.L. Wald, \emph{Improved magnetic
  resonance fingerprinting reconstruction with low-rank and subspace modeling},
  IEEE Transactions on Medical Imaging \textbf{79} (2018), 933--942,
  \url{https://dx.doi.org/10.1002/mrm.26701}.

\bibitem{ZhaSetYeCauWal16}
B.~Zhao, K.~Setsompop, H.~Ye, S.F. Cauley, and L.L. Wald, \emph{Maximum
  likelihood reconstruction for magnetic resonance fingerprinting}, IEEE
  Transactions on Medical Imaging \textbf{35} (2016), 1812--1823,
  \url{https://dx.doi.org/10.1109/TMI.2016.2531640}.

\end{thebibliography}

\end{document}